\documentclass[12pt]{article} 
\usepackage{array,epsfig,fancyhdr,rotating}
\usepackage[]{hyperref}  
\usepackage{sectsty, secdot}
\renewcommand{\theequation}{\thesection\arabic{equation}}
\subsectionfont{\fontsize{12}{14pt plus.8pt minus .6pt}\selectfont}

\usepackage{amsmath}
\usepackage{amssymb}
\usepackage{amsfonts}
\usepackage{multirow}
\usepackage{amsthm}

\usepackage{natbib}
\usepackage{comment}
\usepackage{enumerate}
\usepackage[ruled,vlined]{algorithm2e}
\usepackage{hyperref} 
\usepackage{algpseudocode}
\usepackage{float}
\usepackage{booktabs}
\usepackage{color}
\usepackage{xcolor}
\usepackage{mathtools}
\mathtoolsset{showonlyrefs}

\newcommand{\field}[1]{\mathbb{#1}}
\newcommand{\R}{\field{R}}

\newcommand{\WCr}{\textcolor{black}}
\newcommand{\p}{\field{P}}
\newcommand{\N}{\field{N}}
\newcommand{\Z}{\field{Z}}

\newcommand{\E}{\field{E}}

\newcommand{\FF}{\mathcal{F}}

\newcommand{\WC}{\textcolor{black}}

\def\argmin{\mathop{\mbox{argmin}}}

\def\lf{\lfloor}
\def\rf{\rfloor}
\def\lc{\lceil}
\def\rc{\rceil}



\textwidth=31.9pc
\textheight=46.5pc
\oddsidemargin=1pc
\evensidemargin=1pc
\headsep=15pt
\topmargin=.6cm
\parindent=1.7pc
\parskip=0pt

\setcounter{page}{1}
\newtheorem{theorem}{Theorem}
\newtheorem{lemma}{Lemma}

\newtheorem{proposition}{Proposition}
\theoremstyle{definition}

\newtheorem{example}{Example}
\newtheorem{remark}{Remark}
\newtheorem{assumption}{Assumption}[section]
\pagestyle{fancy}

\pagestyle{fancy}

\lhead[\fancyplain{} \leftmark]{\thepage}
\chead[]{}
\rhead[]{\fancyplain{}\rightmark}
\cfoot{}


\begin{document}


\renewcommand{\baselinestretch}{2}

\markright{ \hbox{\footnotesize\rm 
}\hfill\\[-13pt]
\hbox{\footnotesize\rm
}\hfill }

\markboth{\hfill{\footnotesize\rm FIRSTNAME1 LASTNAME1 AND FIRSTNAME2 LASTNAME2} \hfill}
{\hfill {\footnotesize\rm FILL IN A SHORT RUNNING TITLE} \hfill}

\renewcommand{\thefootnote}{}
$\ $\par


\fontsize{12}{14pt plus.8pt minus .6pt}\selectfont \vspace{0.8pc}
\centerline{\large\bf Confidence surfaces for the mean  of locally stationary }
\vspace{2pt} 
\centerline{\large\bf  functional time series}
\vspace{.4cm} 
\centerline{Holger Dette, Weichi Wu} 
\vspace{.4cm} 
\centerline{\it Ruhr-Universit\"at Bochum and Tsinghua University}
 \vspace{.55cm} \fontsize{9}{11.5pt plus.8pt minus.6pt}\selectfont


\begin{quotation}
\noindent {\it Abstract:}
The problem of constructing a simultaneous confidence surface for the 2-dimensional mean function of a non-stationary functional time series is challenging as these bands can not be built on classical limit theory for the maximum absolute deviation between an estimate and the time-dependent regression function. In this paper, we propose a new bootstrap methodology to construct such a region. Our approach is based on a  Gaussian approximation for the maximum norm of sparse high-dimensional vectors approximating the maximum absolute deviation which is suitable for nonparametric inference of high-dimensional time series. The elimination of the zero entries produces (besides the time dependence) additional dependencies such that the ''classical'' multiplier bootstrap is not applicable. To solve this issue we develop a novel multiplier bootstrap, where blocks of the coordinates of the vectors are multiplied with random variables, which mimic the specific structure between the vectors appearing in the Gaussian approximation. We prove the validity of our approach by asymptotic theory, demonstrate good finite sample properties by means of a simulation study and illustrate its applicability by analyzing a data example.

\vspace{9pt}
\noindent {\it Key words and phrases:}
locally stationary times series, functional data,  confidence surface, Gaussian approximation, multiplier  bootstrap
\par
\end{quotation}\par

\def\thefigure{\arabic{figure}}
\def\thetable{\arabic{table}}

\renewcommand{\theequation}{\thesection.\arabic{equation}}

\fontsize{12}{14pt plus.8pt minus .6pt}\selectfont

\section{Introduction}
\label{sec1} 
\def\theequation{1.\arabic{equation}}
\setcounter{equation}{0}

In the big data era  data gathering technologies provide enormous amounts of data with complex structure.
In many applications the observed  data exhibits certain degrees of
dependence and smoothness and thus may naturally be regarded as discretized
functions. A major tool for the  statistical analysis of  such data is  functional data analysis 
(FDA) which  has
found considerable  attention in the  statistical literature
\citep[see, for example, the monographs of][among others]{bosq2000,RamsaySilverman2005,FerratyVieu2010,HorvathKokoskza2012,hsingeubank2015}.
In FDA the considered parameters, such as the mean or the (auto-)covariance (operator) 
are functions themselves, which makes the development of statistical methodology 
challenging.  
Most of the literature considers  Hilbert space-based methodology for which  there exists by now a well-developed theory. 
In particular, this  approach   allows the application of
dimension reduction techniques such as (functional) principal components \citep[see, for example,][]{shang2014}.
On the other hand, in many applications data is observed on a very fine gird and it is reasonable to assume that functions are at least continuous
\citep[see also][for a discussion of the integral role of smoothness]{RamsaySilverman2005}. In such cases 
fully functional methods can prove advantageous
and have been recently, developed by   \cite{HorvathKokoszkaRice2014}, \cite{BucchiaWendler2015}, 
\cite{Aue2015DatingSB},   \cite{dettekokotaue2020}
and \cite{ dettekokot2020} among others.

In this paper we are interested in statistical inference regarding the smooth mean functions
of a not necessarily stationary functional time series $(X_{i,n})$, $1\leq i\leq n$ in the space 
$L^2[0,1]$ of square integrable functions on the interval $[0,1]$. 
As we do not assume stationarity, the mean function
$t \to   \mathbb{E} [X_{i,n} (t) ] $
is changing with $i$ and we assume that it is given by   $    \mathbb{E} [X_{i,n} (t) ] = m(\tfrac{i}{n},t) $, where $m$ is a smooth function on the unit square. Our goal is  the construction of simultaneous confidence 
surfaces (SCSs) for the (time dependent) mean function $(u,t) \to m (u,t)$ of the locally stationary functional time series $\{X_{i,n} \}_{i=1,\ldots,n}$.
As an illustration we display in	Figure \ref{Fig-Introduction}  the implied volatility of an SP$500$ index  as a function of moneyness ($t$)  at different times to maturity ($u$, which is scaled to the interval $[0,1]$). These functions are quadratic and known as ``volatility smiles'' in the literature on option pricing. They seem to slightly vary  in time. In practice, it is important to assess whether these ``smiles'' are time-invariant.    We refer the interested reader to Section \ref{sec3} for a more detailed discussion (in particular we construct there  a confidence surface for the function $(u,t) \to m (u,t)$).
\begin{figure}
	\centering
	\includegraphics[width=12cm,height=6cm]{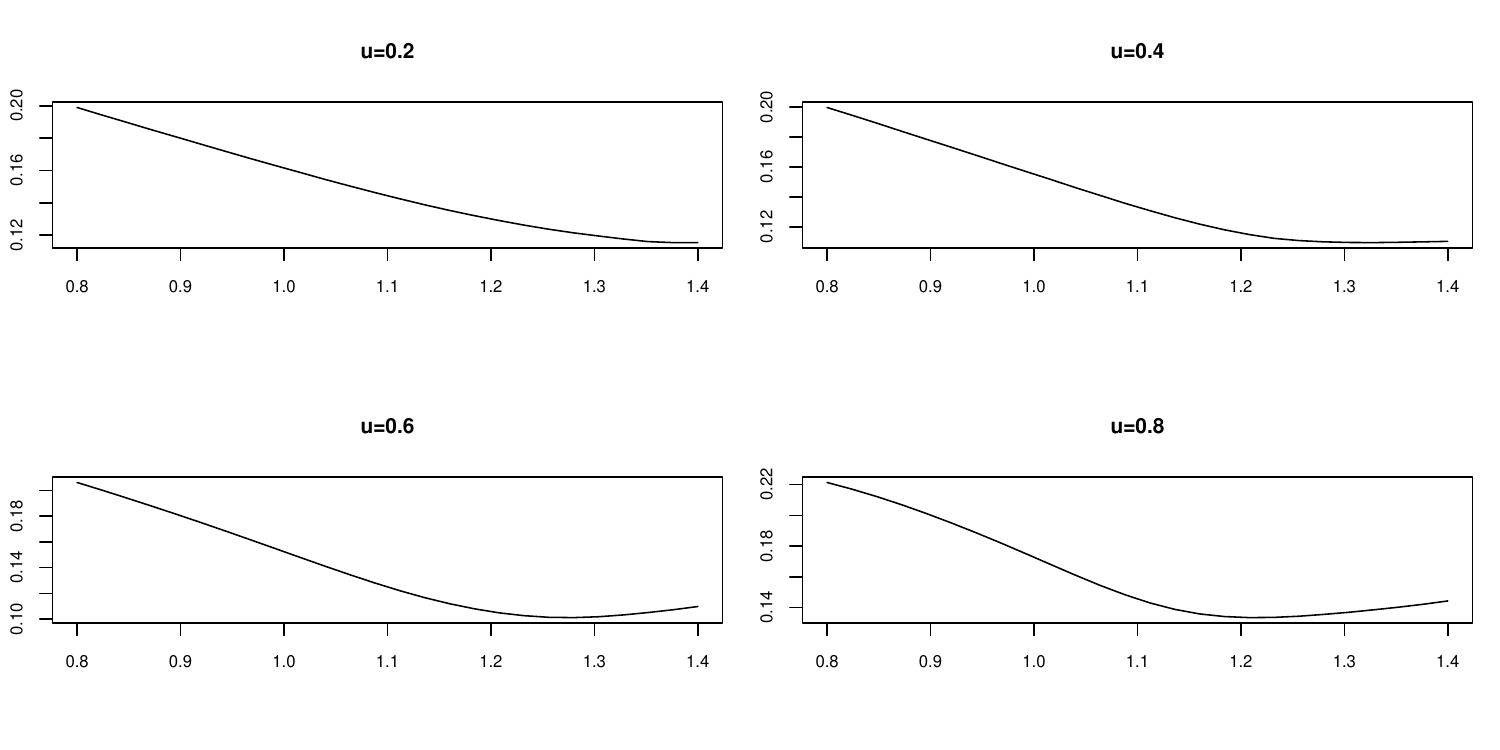}
	\caption{ \it 
		Volatility Smile at different (one minus) times to maturity ($u =0.2$, $0.4$, $0.6$ and $0.8$). The x-axis corresponds to 'moneyness'. 
	}
	\label{Fig-Introduction}
\end{figure}
To our best knowledge  confidence bands have only been considered in the stationary case, where $m(u,t) = m(t)$. Under the 
assumption of stationarity 
they can be constructed using the sample 
mean $\bar X_n = \frac{1}{n} \sum_{i=1}^n X_{i,n}  $ and the weak convergence of 
$\sqrt{n} (\bar X_n - m) $ to a centered Gaussian process \citep[see, for example][who either assume that data is observed on a dense  grid or that the full trajectory can be observed]{degras2011,caoyangtodem2012,degras2017,dettekokotaue2020}.
More recently, alternative 
simultaneous  confidence (asymptotic)  bands have been 
constructed by \cite{lieblreim2019,telschwartz2022}  using  the  Gaussian Kinematic formula.  

On the other hand, 
although non-stationary 
functional time series have found considerable interest in the recent literature 
\citep[see, for example,][]{anne2017,delftaue2020,buchdetthein2020,kurisu2021nonparametric,kurisu2021estimation,anneholger}, the problem 
of constructing a confidence surface for the mean function has not been considered in the literature so far.
A potential  explanation that  a solution is still not available, consists in the fact that 
due to 	non-stationarity 
smoothing is required to estimate the function 
$u \to m(u,t)$ (for a fixed $t$).
This results  in an estimator  converging with a   $1/\sqrt{b_n n}$ rate (here $b_n$ denotes a bandwidth).
On the other hand, in the stationary case (where $m$ 
does not depend on $u$),
the sample mean 
$ \bar X_n $ can be used, resulting  in a $1/\sqrt{n}$ rate.

As a consequence, a weak convergence result for  the 
sample mean in the non-stationary case is not available and  
the construction of SCSs 
for the regression function $(u,t)\to m(u,t)$  is challenging.
In this paper we propose a general solution to this problem, which is not based on weak convergence results.
As an alternative  to ``classical'' limit theory  (for which it is not clear if it exists in the present situation) 
we develop  Gaussian  approximations for the maximum absolute deviation  between the estimate and  
the  regression function.  These results are then  used to construct  a non-standard multiplier bootstrap procedure  
for the construction  of SCSs for the mean function of a locally stationary  functional
time series. 
Our approach is based on approximating 
the maximal absolute deviation $\hat  \Delta = \max_{u,t \in [0,1]} |\hat m_{b_n}(u,t) - m (u,t) | $ by a maximum taken over a discrete grid, which becomes dense with increasing sample size. 
We thus  relate $\hat  \Delta $ to the maximum norm of a {\bf sparse} high-dimensional vector.
We then further develop Gaussian approximations for the maximum norm of {\bf sparse}  high-dimensional random vectors based on the methodology  proposed by  \cite{chernozhukov2013gaussian} \cite{zhang2018gaussian}. Finally, the covariance structure of this vector 
(which is actually a high-dimensional long-run variance) is mimicked  by a multiplier bootstrap.  Our approach is non-standard in the following sense: due to the sparsity, the Gaussian approximations in the cited literature cannot be directly used. In order to make these applicable we reduce the dimension by deleting vanishing entries. However, this procedure produces additional spatial dependencies (besides the dependencies induced by time series), such that the common multiplier bootstrap is not applicable. Therefore we propose a novel multiplier bootstrap, where instead of the full vector, individual 
blocks of the vector are multiplied with independent random variables,
such that  for different vectors a certain amount (depending on the lag) of these multipliers coincide. { Our proposed Gaussian approximation and the bootstrap scheme are suitable for nonparametric inference of means of high-dimensional  time series. In fact, we first discretize our functional time series to nonstationary high-dimensional time series  and then utilize the above-mentioned Gaussian approximation and bootstrap scheme for simultaneous inference.}

The remaining part  of the paper is organized as follows. The statistical model is introduced in Section \ref{sec2}, where we
describe our approach in an informal way and  propose two 
confidence surfaces  for the mean function $m$.
Section  \ref{sec4} is devoted to   rigorous statements under which 
conditions our method  provides valid (asymptotic) confidence 
surfaces.  
As a by-product of our approach, we also derive in the online supplement of this paper
new confidence bands for the functions $t \to m(u,t)$ (for fixed $u$) and $u \to m(u,t)$ (for fixed $t$), which provide efficient alternatives to the commonly used confidence bands for stationary functional data or real-valued locally stationary data, respectively
(see Section \ref{secA} for details).
\WCr{Although our main 
focus  is on  SCSs excluding the boundary (as most work in the literature does), we also 
provide  -  as  a complement -  simultaneous inference at the boundary, which  is of independent interest, see  Remark \ref{Boundary-Remark}.}  
In Section \ref{sec3} 
we demonstrate  the usefulness of our approach by means of analyzing a data example.
\WCr{Finally, all technical results are deferred to  the online supplement. There, we also give   remarks regarding noisy and multivariate locally stationary functional time series and  some concrete examples of locally stationary functional time series. Moreover,  the online supplement  contains  implementation details  and  a  simulation study illustrating the finite 
sample properties of the  asymptotic results.}

\section{ SCSs for  non-stationary time series} 
\label{sec2}
\def\theequation{2.\arabic{equation}}
\setcounter{equation}{0}

Throughout this paper  we consider the model 
\begin{align} \label{1.1}
	X_{i,n} (t) =  m(\tfrac{i}{n},t) + \varepsilon_{i,n}(t) ~,~~i=1, \ldots , n ~,
\end{align}
where $(\varepsilon_{i,n})_{i=1, \ldots ,n} $ is a centered locally stationary process in	$L^2[0,1]$ of square integrable functions on the interval $[0,1]$
(see Section \ref{sec4} for a precise mathematical  definition)
and $m: [0,1] \times [0,1] \to \R $ is a smooth   mean function. 
This means that at each time point ``$i$'' we observe  a function $t \to X_{i,n} (t)$.  

\WCr{
Let $\mathcal C^{a,b}$ denote  the set  of  functions $f: [0,1]^2 \to \mathbb{R} $, which  are  $a$-times and $b$-times  partially differentiable with respect to  the first and second coordinate, respectively, 
 such that for fixed $t$  and for fixed $u$ the functions 
$u \to \frac{\partial^a}{\partial u^a}f(u,v)$ and $t \to \frac{\partial^b}{\partial t^b}f(u,t)$ are Lipschitz continuous with a uniformly bounded Lipschitz constant on the interval  $[0,1]$.   
In this paper, 
 we are interested in a  SCS 
\begin{equation}
	\label{2.3}
	{\cal C}_n =     \big \{ f \in \mathcal C^{3,0}  ~| ~~ \hat L_1 (u,t) \leq f(u,t)  \leq  \hat U_1 (u,t) ~~\forall u, t ~ \big \} 
\end{equation}}
for the mean function $(u,t) \to m(u,t)$,
where $\hat L_1$ and $\hat U_1$  are  appropriate lower and upper bounds calculated from the data. 
The methodology developed in this paper  allows the construction
of SCSs for the functions 
$t \to m(u,t)$ (for fixed $u$) and $u \to m(u,t)$  (for fixed $t$),
which 
are developed in Section \ref{secA} of the online supplement for the sake of completeness.
Moreover,  in the main part of this paper we investigate  SCSs 
for  $u\in[b_n,1-b_n]$. SCSs on the boundary
can be constructed in a similar spirit,  even though their form is distinct from their interior counterparts, see Remark \ref{Boundary-Remark}.

Our approach is based on the maximum deviation
\begin{align}  \label{m2}
	\hat \Delta_n = 
	\sup_{t,u}|\hat m_{b_n}(u,t)-m(u,t)|~,
\end{align}
where   for $u\in [b_n,1-b_n]$
\WCr{\begin{align} 	\label{estimating-m} 
	\hat m_{b_n}(u,t) & =\sum_{i=1}^n X_{i,n}(t)K\Big(\frac{\tfrac{i}{n}-u}{b_n}\Big)/\sum_{i=1}^nK\Big(\frac{i/n-u}{b_n}\Big)
\end{align}
denotes the  common Nadaraya-Watson estimate with kernel $K$.  For 
$u\in [0,b_n)$ and for $u\in (1-b_n,1]$  
we use boundary kernels, say  $K_l(\cdot)$ and $K_r(\cdot)$, respectively, in the definition of $\hat m_{b_n}(u,t)$. 
Throughout this paper, we make  the following assumptions on these kernels.
\begin{assumption}
\rm 
\label{asskern}
The kernel $K(\cdot)$ is  symmetric continuous, supported on the interval $[-1,1]$ and satisfies $\int_{\mathbb R}  K(x)dx=1$,  $\int_{\mathbb R}  K(v)v^2dv=0$ and $\int_{\mathbb R} K(v)v^4dv>0$. 
$K_l$ 
(used for estimation on  $[0,b_n)$)  is supported on the interval $[0,1]$ and satisfies $K_l(0)=K_l(1)=0,$ $ \int K_l(x)x^jdx=0$ for $j=1,2$, $\int K_l(x)dx=1, \int K_l(x)x^3dx>0$. Additionally, both $K$ and $K_l$ are twice differentiable on their support, respectively and  $K''$ and $K_l''$ are Lipschitz continuous. 
The kernel $K_r$ (used  for estimation  on  $(1-b_n, 1]$) is given by  $K_r(x)=K_l(-x)$.
\end{assumption}} 

\WCr{ 
As a consequence of Assumption \ref{asskern}, the bias of  the  estimator \eqref{estimating-m} is of order 
 $O(b_n^4)$ in  the interval $[b_n,1-b_n]$,  and of order $O(b_n^3)$ at the region $[0,b_n)$ and $(1-b_n,1]$ if the function  $u \to  \frac{\partial^3}{\partial u^3}m(u,v)$ is Lipschitz continuous with bounded Lipschitz constant. As alternative one could consider the local  polynomial regression estimate of order $3$. As this will make the  theoretical analysis  even  more technical,  we leave the study of the statistical properties of  local polynomial estimator of higher order for a  $2$-dimensional mean function of a non-stationary functional time series for  future work.}

\subsection{Confidence  surfaces with fixed width}
\label{sec21} 

Note that under smoothness assumption the deterministic term in
\eqref{estimating-m}  approximates $m(u,t)$.  
For an increasing sample size $n$, we can 
approximate the maximum deviation on a discrete grid,  
i.e.  
\begin{align}
	\nonumber
	\hat \Delta_{b_n} &:=& 
	\max_{\substack{ b_n \leq u\leq 1-b_n,\\
			0\leq t\leq 1}} \sqrt {nb_n}  | \hat \Delta(u,t) |  \approx 
	\max_{\substack{
			\lceil nb_n\rceil\leq l\leq n-\lceil nb_n \rceil	\\  
			1\leq k\leq p }
	}
	\sqrt{nb_n} |	\hat \Delta(\tfrac{l}{n},\tfrac{k}{p})|  
	\\
	\label{delta1}
	&			\approx  & 
	\max_{\substack{
			\lceil nb_n\rceil\leq l\leq n-\lceil nb_n \rceil	\\  
			1\leq k\leq p 
	} }
	\Big |\frac{1}{\sqrt {nb_n}}\sum_{i=1}^n  \varepsilon_{i,n} (\tfrac{k}{p})		K\Big(\frac{\tfrac{i}{n}-\tfrac{l}{n}}{b_n}\Big) \Big | , ~~~~~~~
\end{align}
where $p$ is increasing with $n$  as well. 
Therefore, the bootstrap procedure
will be based on a  Gaussian approximation of the right hand side of \eqref{delta1}, which is the maximum norm of high-dimensional sparse vector. 
In this section our approach 
will be stated in a rather informal way, rigorous  statements can be found in Section \ref{sec4}. 

To be precise, define for $1\leq i\leq n$ the $p$-dimensional vector
\begin{align} \label{3.1}
	Z_{i}(u) & = (Z_{i,1}(u), \ldots , Z_{i,p}(u))^\top \\
	\nonumber 
	& = K \Big (\frac{\tfrac{i}{n}-u}{b_n} \Big ) 	\Big (\varepsilon_{i,n} (\tfrac{1}{p})
	,\varepsilon_{i,n} (\tfrac{2}{p})
	,\ldots ,\varepsilon_{i,n} (\tfrac{p-1}{p}), \varepsilon_{i,n} (1)
	\Big )^\top ~,
\end{align}
where  $K(\cdot)$ and $b_n$ are the interior kernel and bandwidth used in the estimate \eqref{estimating-m}, respectively.
Next we  define the $p$-dimensional vector  
\begin{equation}
	Z_{i,l}=Z_i(\tfrac{l}{n}) = (Z_{i,l,1}, \ldots Z_{i,l,p})^\top~,
	\label{hol20}  
\end{equation}
where  
$$
Z_{i,l,k} = \varepsilon_{i,n} (\tfrac{k}{p}) K \Big (\frac{\tfrac{i}{n}-\tfrac{l}{n}}{b_n}
\Big ) ~~~~
(1\le k\le p)~.
$$
Note that, by \eqref{delta1}, 
\begin{align} \label{delta11}
\hat \Delta_{b_n} 
\approx 
\max_{\substack{	\lceil nb_n\rceil\leq l\leq n-\lceil nb_n \rceil	\\  
	1\leq k\leq p }}
\Big | 	\frac{1}{\sqrt{nb_n}}  \sum_{i=1}^n Z_{i,l,k} \Big | 	\approx \Big | 
\frac{1}{\sqrt{nb_n}} 
\sum_{i=1}^n(Z_{i,{\lceil nb_n\rceil }}^\top,\ldots  ,Z_{{i,n-\lceil nb_n\rceil}}^\top)^\top \Big |_\infty~,
\end{align}
where $ | a |_\infty $ denotes the maximum norm of a finite dimensional vector $a$ 
(the dimension will always be clear from context).
The 	entries in  the vector $Z_{i,l}$ are  zero whenever  ${|i-l|}/{(nb_n)} \geq 1$. Therefore, the high-dimensional vector $(Z_{i,\lceil nb_n \rceil}^\top,\ldots ,Z_{i,n-\lceil nb_n \rceil}^\top)^\top$ is sparse and  common Gaussian approximations for its  maximum  norm \citep[see, for example,][]{chernozhukov2013gaussian}, \cite{zhang2018gaussian}  are not  applicable.

To address this issue 
we reconstruct high-dimensional vectors, say $\tilde Z_j$,  by eliminating   vanishing entries in the  vectors  $Z_{i,l}$  and rearranging  the nonzero ones. 
While this approach is very natural it produces additional dependencies, which require a substantial modification  of the common multiplier bootstrap 
as considered, for example, in
\cite{zhou2010simultaneous}, \cite{zhou2013heteroscedasticity}, \cite{karmakar2021simultaneous} or  \cite{mies2021functional}  for 
(low dimensional) locally stationary  time series. More precisely, we define
the $(n-2\lceil nb_n\rceil +1)p$-dimensional vectors
$\tilde Z_1 , \ldots , \tilde {Z}_{2\lceil nb \rceil-1}$  by 
\begin{align}\label{tildeZi}
\tilde Z_i= \big (
Z_{i,\lceil nb_n\rceil}^\top,
Z_{i+1,\lceil nb_n\rceil+1}^\top, 
\ldots ,
Z_{n-2\lceil nb_n\rceil+i , n-\lceil nb_n \rceil  }^\top
\big)^\top  ~.
\end{align}
We also put $\tilde Z_{2\lceil nb_n\rceil }=0$ and note 
that 
\begin{align}\label{tilde-Zi-max}
\Big |  \frac{1}{\sqrt{nb_n}} 
\sum_{i=1}^n \bar Z_i \Big |_\infty
= \Big  |\frac{1}{\sqrt{nb_n}}\sum_{i=1}^{2\lceil nb_n\rceil-1 }\tilde Z_i \Big  |_\infty ,
\end{align} 
where $\bar Z_i:=(Z_{i,\lceil  nb_n \rceil }^\top, Z_{i,\lceil  nb_n \rceil+1 }^\top, \ldots ,Z_{i,n-\lceil  nb_n \rceil}^\top)^\top$.
Note that the right hand side of \eqref{tilde-Zi-max} is a sum of the $(n-2\lceil nb_n\rceil-1)p$ dimensional vectors
\begin{equation}
\begin{split}
\label{rep1}
\tilde Z_1&=K\big (\tfrac{1-\lceil nb_n\rceil}{nb_n}\big )(\vec{ \varepsilon}_1,\vec \varepsilon_2, \ldots ,\vec \varepsilon_{n-2\lceil nb_n\rceil+1})^\top,\\
\tilde Z_2&=K\big(\tfrac{2-\lceil nb_n\rceil}{nb_n}\big)(\vec{ \varepsilon}_2,\vec \varepsilon_3, \ldots ,\vec \varepsilon_{n-2\lceil nb_n\rceil+2})^\top,\\
&\vdots\\
\tilde Z_{2\lceil nb_n\rceil-1}&=K\big (\tfrac{\lceil nb_n\rceil-1}{nb_n}\big )
\big (\vec{ \varepsilon}_{2\lceil nb_n\rceil-1},\vec \varepsilon_{2\lceil nb_n\rceil}, \ldots ,\vec \varepsilon_{n-1} \big )^\top,
\end{split} 
\end{equation}
where $\vec{\varepsilon}_i=(\varepsilon_{i,n}(\tfrac{1}{p}), \ldots ,\varepsilon_{i,n}(\tfrac{p}{p})).$   On the other hand the left hand side of \eqref{tilde-Zi-max} is a sum of the {\bf sparse}
vectors 
\begin{equation}
\label{rep1a}
\begin{split}
\bar Z_1&=\Big({K\big (\tfrac{1-\lceil nb_n\rceil}{nb_n}\big )\vec \varepsilon_1}~,~~~~~~~~~0~~~~~~~~~~,~0~,~\ldots  ~,0~,~~~~~~~~~~~~~ 0~~~~~~~~ \Big)^\top,\\
\bar Z_2&=\Big(K\big (\tfrac{2-\lceil nb_n\rceil}{nb_n}\big )\vec \varepsilon_2 ~,~{K\big (\tfrac{1-\lceil nb_n\rceil}{nb_n}\big )\vec \varepsilon_2}~,~0~,~\ldots ~,~0~,~~~~~~~~~~~~~ 0~~~~~~~~\Big)^\top,\\
& \vdots\\
\bar Z_{n-1}&=\Big(~~~~~~~~~0~~~~~~~~ , ~~~~~~~~~~0 ~~~~~~~~~,~ 0~,~\ldots
~,~0~,~K\big (\tfrac{\lceil nb_n\rceil-1}{nb_n}\big )\vec\varepsilon_{n-1}\Big)^\top.
\end{split}
\end{equation}
Although, the vectors on both sides of \eqref{tilde-Zi-max} are very different, and the  number of terms in the sum is different, the  non-vanishing elements over which 
the maximum is taken on both sides  coincide. We note that this transformation yields some computational advantages and, even more important,  it allows the development of a Gaussian approximation and a corresponding multiplier bootstrap, which is explained next.

To be precise, observing \eqref{delta11},
we see that  the right hand side of \eqref{tilde-Zi-max} is an  approximation of the maximum  absolute deviation 
$	\max_{u,t} \sqrt {nb_n} | \hat \Delta(u,t) |$.  In Theorem \ref{SCB} in Section \ref{sec42} 
we will show that  the vectors 
$\tilde Z_1 , \ldots , \tilde {Z}_{2\lceil nb \rceil-1}$  in \eqref{tilde-Zi-max} can
be replaced by Gaussian vectors. More precisely we prove the existence of  
$(n-2\lceil nb_n\rceil +1)p$-dimensional centered Gaussian vectors
$\tilde Y_1, \ldots , \tilde Y_{2\lc nb_n\rc-1}$  with the same auto-covariance structure as  the vector $\tilde Z_i$ in \eqref{tildeZi} such that
\begin{align} 	
\sup_{x\in \mathbb R} \Big |\p \Big ( 
\max_{\substack{b_n \leq u\leq 1-b_n \\ 0\leq t\leq 1}}\sqrt{nb_n}|\hat \Delta(u,t)|\leq x \Big )-\p 
\Big (\Big |\frac{1}{\sqrt{nb_n}}\sum_{i=1}^{2\lc nb_n\rc-1}&\tilde Y_i \Big  |_\infty\leq x\Big )\Big |
\notag\\&= o(1)\label{det1a}
\end{align}
if  $ p $ is an appropriate sequence converging  to infinity  with the  sample size 
(for example, $p= \sqrt{n}$).

The estimate \eqref{det1a}	is the basic tool for the  construction of a SCS  for the regression function $m$. 
For its application it is necessary to generate  Gaussian random vectors $\tilde Y_i$ with the same auto-covariance structure as  the vector  $\tilde Z_i$ 	in \eqref{tildeZi}, which is not trivial.   To see this, note
that   the common multiplier bootstrap approach for approximating the distribution of $\frac{1}{\sqrt{nb_n}}\sum_{i=1}^{2\lceil nb_n\rceil-1}\tilde Z_i$ replaces 
the  $\tilde Z_i$ by 
block sums multiplied with independent 
random variables, such as
$R_i \sum_{s=i}^{i+m}\tilde Z_s/\sqrt{m} $ 
\citep[see][]{zhang2014bootstrapping}
or  $R_i\sum_{s=i}^{i+m}(Z_s-\frac{1}{2\lceil nb_n\rceil-1}\sum_{s=1}^{2\lceil nb_n\rceil-1}\tilde Z_i)/\sqrt{m}$
\citep[see][]{zhou2013heteroscedasticity},
where $R_1, \ldots ,R_{2\lceil nb_n\rceil}$ are independent standard normally distributed random variables.
However, this would not yield to valid approximation due to the additional dependencies between $\tilde Z_1, \ldots ,\tilde Z_{2\lceil nb_n\rceil-1}.$   As an alternative we therefore propose a multiplier bootstrap, which also mimics this dependence structure by multiplying $p$-dimensional blocks of block sums of  $\tilde Z_i$ by standard normally distributed random variables,
which reflects the specific dependencies of these vectors. In other words the vectors
$\tilde Z_1, \tilde Z_2, \tilde Z_3, \ldots $  in \eqref{rep1} are replaced by 
\begin{equation}
\begin{split}
&K\big (\tfrac{1-\lceil nb_n\rceil}{nb_n}\big )(\vec{ \varepsilon}_{1:1+m_n} R_1,\vec \varepsilon_{2:2+m_n} R_2 , \ldots ,\vec \varepsilon_{n-2\lceil nb_n\rceil+1:n-2\lceil nb_n\rceil+1+m_n } R_{n-2\lceil nb_n\rceil+1} )^\top,\\
&K\big(\tfrac{2-\lceil nb_n\rceil}{nb_n}\big)(\vec{ \varepsilon}_{2:2+m_n} R_2,\vec \varepsilon_{3:3+m_n} R_3 , \ldots ,\vec \varepsilon_{n-2\lceil nb_n\rceil+2 : n-2\lceil nb_n\rceil+2 +m_n } R_{n-2\lceil nb_n\rceil+2})^\top,\\
&K\big(\tfrac{3-\lceil nb_n\rceil}{nb_n}\big)(\vec{ \varepsilon}_{3:3+m_n} R_3,\vec \varepsilon_{4:4+m_n} R_4 , \ldots ,\vec \varepsilon_{n-2\lceil nb_n\rceil+3 : n-2\lceil nb_n\rceil+3 +m_n } R_{n-2\lceil nb_n\rceil+3})^\top,\\
& ~~~~~ ~~~~~ ~~~~~ ~~~~~ ~~~~~ ~~~~~ ~~~~~ ~~~~~   \vdots
\end{split} 
\label{mimic}
\end{equation}
respectively, where  
\begin{align}
\label{det41}
\vec \varepsilon_{j:j+m_n}=\frac{1}{\sqrt m_n} \sum_{r=j}^{j+\lf m_n/2\rf -1}\vec{\varepsilon}_r- 
\frac{1}{\sqrt m_n}\sum_{r=j+\lf m_n/2\rf }^{j+2\lf m_n/2\rf -1}\vec{\varepsilon}_r.
\end{align}
Here we consider local block sums (of increasing length) to mimic the dependence structure of the error process.
A difference of local block sums is used 
to mitigate the effect of bias  if all elements of the 
unknown errors $\vec{ \varepsilon}_i$ are replaced by  corresponding nonparametric residuals $\hat \varepsilon_{i,n}(s/p)$, $1\leq s\leq p$ where
\WCr{
$	\hat \varepsilon_{i,n}(t)=X_{i,n}(t)-\hat m_{b_n} (\tfrac{i}{n},t)$,
  where $\hat m_{b_n}$ is defined 
 \eqref{estimating-m}. 
We emphasize that the use of  boundary kernels  in this estimate (see Assumption \ref{asskern}) allows us to construct SCSs for  the boundary region as well. Details are given in  Remark \ref{Boundary-Remark} below.}
With the residuals we define the $p$-dimensional vector\begin{align}
\label{3.10}
\hat Z_{i}(u)
& = (\hat Z_{i,1}(u), \ldots , \hat Z_{i,p}(u))^\top \\
\nonumber 
& = K \Big (\frac{\tfrac{i}{n}-u}{b_n} \Big ) \big  (\hat \varepsilon_{i,n}(\tfrac{1}{p})
,\hat \varepsilon_{i,n}(\tfrac{2}{p})
,\ldots ,\hat \varepsilon_{i,n} (\tfrac{p-1}{p}),
\hat \varepsilon_{i,n}(1)
\big )^\top
\end{align}	
as an analog of \eqref{3.1}. Similarly, we define the 
analog of  \eqref{tildeZi} by
\begin{align}\label{3.11}
\hat{\tilde Z}_j= \big (
\hat Z_{j,\lceil nb_n\rceil}^\top,
\hat	Z_{j+1,\lceil nb_n\rceil+1}^\top, 
\ldots ,
\hat	Z_{n-2\lceil nb_n\rceil+j, n-\lceil nb_n \rceil  }^\top
\big)^\top ,
\end{align}
where 
$
\hat Z_{i,l}= 	\hat Z_i(\tfrac{l}{n}) = (	\hat Z_{i,l,1}, \ldots 	\hat Z_{i,l,p})^\top~.
$ 
Note that we have replaced $Z_{i,l}$ in \eqref{tildeZi} by  $\hat Z_{i,l}$, which can be calculated from the data.
These vectors will be used 
in Algorithm \ref{algorithm1} to 
define empirical versions of the vectors in \eqref{mimic}, which then mimic the dependence structure of the 
vectors $\tilde Y_1, \ldots , \tilde Y_{2\lc nb_n\rc-1}$ in the Gaussian approximation  \eqref{det1a}
(see equations \eqref{hol10} and \eqref{3.13} in Algorithm \ref{algorithm1} ).
The SCS   for the mean function $m$ is finally defined by
\begin{equation} \label{n02}
{\cal C}_n =     \big \{ f\in \mathcal C^{3,0} : [0,1]^2 \to \R | ~ \hat L_1 (u,t) \leq f(u,t)  \leq  \hat U_1 (u,t) ~\forall   u \in  [b_n,1-b_n] ~\forall   t \in  [0,1] \big \}
~,~~~			\end{equation}
where the definition of functions $\hat L_1,  \hat U_1 : [0,1]^2 \to \mathbb{R}  $ is given in  Algorithm  
\ref{algorithm1}. Finally, Theorem \ref{Thm2.1-2021} in Section \ref{sec4}  shows that ${\cal C}_n$ defines a valid asymptotic 
$(1-\alpha)$ confidence surface for the regression function $m$ in model \eqref{1.1}.

\begin{remark}\label{Remarksmooth} 
\textcolor{blue}{In this paper we assume that at each time point  the full trajectory is observed. Therefore, smoothing with respect to the variable $t$ is not necessary.   Smoothing with respect to both variables  becomes necessary if the trajectories are observed with measurement error.
 Our method is also applicable to dense and discrete observations from the trajectory. In these cases   smoothing with respect  to the variable $t$  yields  a further  bias.
 Another scenario when smoothing is important is the situation where the trajectory is observed at sparse discrete points. This case  is beyond the scope of our paper because it requires a different theory.
 }
\end{remark}

\begin{algorithm}
%

\begin{itemize} 
\item[(a)]  Calculate  the $(n-2\lceil nb_n\rceil +1)p$-dimensional vectors   $\hat{\tilde Z}_i$
in \eqref{3.11} 
\item[(b)] For window size $m_n$, 
let $m_n'=2\lfloor m_n/2\rfloor$, define
the   vectors 
\begin{align}	\label{hol10}
	\hat S_{jm_n'}= \frac{1}{\sqrt{m_n'}} \sum_{r=j}^{j+\lfloor m_n/2\rfloor-1 } 
	\hat{\tilde  Z}_{r} - \frac{1}{\sqrt{m_n'}}
	\sum_{r=j+ \lfloor m_n/2\rfloor}^{j+ m_n^\prime -1 }  \hat{\tilde  Z}_{r}
\end{align}

and denote by $\hat{\vec \varepsilon}_{j:j+m_n',k}$
the
$p$-dimensional sub-vector of the vector $\hat S_{jm_n'}$  in \eqref{hol10}
containing its $(k-1)p+1${st }  $-$  $kp${th}
components.
\item[(c)]   \textbf{For} r=1, \ldots , B, \textbf{do}
\smallskip
Generate i.i.d. $N(0,1)$ random variables $\{R^{(r)}_i\}_{i=1, \ldots  , n-m_n'}$.
Calculate 
\begin{align}  \label{3.13}
	T_k^{(r)}& =\sum_{j=1}^{2\lceil nb_n \rceil-m_n'} 
	\hat{\vec \varepsilon}_{j:j+m_n',k}
	R^{(r)}_{k+j-1} ~,~~ 1\leq k\leq n-2\lceil nb_n\rceil+1  , \\
	T^{(r)} & =
	\max_{1\leq k\leq n-2\lceil nb_n\rceil+1} |T_k^{(r)}|_\infty.
	\nonumber 
\end{align} 
\textbf{end}

\smallskip
\item [(d)]
Define $T_{\lf (1-\alpha)B\rf}$ as the empirical $(1-\alpha)$-quantile of the bootstrap sample
$T^{(1)}, \ldots , T^{(B)}$ and  
\begin{align*}
	\hat   L_1 (u,t)   = \hat m_{b_n}(u,t) - \hat r_1 ,~~~~
	\hat  U_1 (u,t) =  \hat m_{b_n}(u,t) + \hat r_1
\end{align*}
where 
$$
\hat r_1 =\frac{\sqrt 2 T_{\lf (1-\alpha)B\rf} }{\sqrt{nb_n}\sqrt{ 2\lceil nb_n\rceil-m'_n}.
} 
$$
{\bf Output:}  SCS \eqref{n02} 
for the mean function $m$.
\end{itemize}

\caption{}\label{algorithm1}
\end{algorithm}


\subsection{Confidence  surfaces with varying width}
\label{sec22}

The confidence surface in 
Algorithm \ref{algorithm1} has  a constant width and does not reflect the variability of the estimate $\hat m$ at the point $(u,t)$. In this section we will 
construct a SCS  adjusted by	an estimator of the   long-run variance  (see  equation \eqref{lrv}  in Section \ref{sec41}  for the exact definition).  Among others, this approach has been proposed by \cite{degras2011} and \cite{zhengyanghardle2014}   for
repeated measurement data from independent subjects where a  variance estimator is used for standardization. It has also been considered by \cite{zhou2010simultaneous}
who derived a simultaneous confidence tube for the parameter of a
time varying coefficients linear model with a (real-valued) 
locally stationary error process. 
In the situation of non-stationary functional data
as 	considered here this task is challenging as
an estimator of  the long-run variance is required, which is  uniformly consistent on the square $[0, 1]^2$.   

In  order to define such an estimator  let $H$  denote the Epanechnikov kernel   and define for some bandwidth $\tau_n\in(0,1)$ the weights 
$$
\bar \omega(t,i)=H\Big(\frac{\tfrac{i}{n}-t}{\tau_n}\Big) \Big / \sum_{ {i}=1}^n  H\Big(\frac{\tfrac{i}{n}-t}{\tau_n}\Big).
$$ 
Let $S^X_{k,r}=\frac{1}{\sqrt r}\sum_{i=k}^{k+r-1}X_{i,n}$
denote the normalized partial sum of the data 
$X_{k,n}, \ldots , X_{k+r-1,n}$ (note that these are functions) 
and define for $w\geq 2$ 
\begin{align*} 
\Delta_j(t)=\frac{S^X_{j-w+1,w}(t)-S^X_{j+1,w}(t)}{\sqrt w}.
\end{align*}
An  estimator of the long-run variance (where the exact definition is in \eqref{lrv}) is then  defined 
by 
\begin{align}\label{2018-5.4}
\hat \sigma^2(u,t)=\sum_{j=1}^n\frac{w\Delta_j^2(t)}{2}\bar\omega(u,j),
\end{align} 
if $u\in[w/n,1-w/n]$.  
For $u\in[0,w/n)$ and   $u\in(1-w/n,1]$ we define it as 
$\hat \sigma^2(u,t)=\hat \sigma^2(w/n,t)$  and 
$\hat \sigma^2(u,t)=\hat \sigma^2(1-w/n,t)$, respectively.
We will show in Proposition
\ref{prop-lrv} in Section \ref{sec4} that this 
estimator is  uniformly  consistent.

To state the bootstrap algorithm for a SCS of the form \eqref{2.3}  with varying  width, we introduce the following notation
\begin{align*}
\hat Z^{\hat \sigma}_{i}(u)
& = (\hat Z^{\hat \sigma}_{i,1}(u), \ldots , \hat Z^{\hat \sigma}_{i,p}(u))^\top \\
\nonumber 
& = K \Big (\frac{\tfrac{i}{n}-u}{b_n} \Big ) \Big  (\frac{\hat \varepsilon_{i,n} (\tfrac{1}{p})}{\hat \sigma(\tfrac{i}{n},\tfrac{1}{p})}
,\frac{\hat \varepsilon_{i,n}(\tfrac{2}{p})}{\hat \sigma(\tfrac{i}{n},\tfrac{2}{p})}
,\ldots ,\frac{\hat \varepsilon_{i,n} (\tfrac{p-1}{p})}{\hat \sigma(\tfrac{i}{n},\tfrac{p-1}{p})},
\frac{\hat \varepsilon_{i,n}(1)}{\hat \sigma(\tfrac{i}{n},1)}
\Big )^\top  
\end{align*}
and consider the normalized   analog  
\begin{align}\label{3.11-sigma}
\hat{\tilde Z}_j^{\hat \sigma}= \big (
\hat Z_{j,\lceil nb_n\rceil}^{\hat \sigma, \top},
\hat	Z_{j+1,\lceil nb_n\rceil+1}^{\hat \sigma, \top}
\ldots ,
\hat	Z_{n-2\lceil nb_n\rceil+j, n-\lceil nb_n \rceil  }^{\hat \sigma, \top}
\big)^\top 
\end{align}
of the  vector 
$	\hat{\tilde Z}_j$ in \eqref{3.11}, 
where 
$
\hat Z^{\hat \sigma}_{i,l}= 	\hat Z^{\hat \sigma}_i(\tfrac{l}{n}) = (	\hat Z^{\hat \sigma}_{i,l,1}, \ldots 	\hat Z^{\hat \sigma}_{i,l,p})^\top~
$.
The SCS
with varying width for the mean function $m$ is then defined by
\begin{equation} 
\label{n1}
{\cal C}^{\hat \sigma}_n =     \big \{ f\in\mathcal C^{3,0} : [0,1]^2 \to \R ~| ~ \hat L^{\hat \sigma}_2 (u,t) \leq f(u,t)  \leq  \hat U^{\hat \sigma}_2 (u,t) ~~\forall   u \in  [b_n,1-b_n] ~\forall   t \in  [0,1] \big \}, 
\end{equation}		
where the functions $\hat L_2 $ and $\hat U_2 $ are
constructed in Algorithm  
\ref{algorithm3}. Theorem \ref{sigma-bootstrap} 
in Section \ref{sec42} shows that this  defines a valid asymptotic 
$(1-\alpha)$ confidence surface for the function $m$ in model \eqref{1.1}.

\WCr{We also emphasize that we can construct similar
SCSs for noisy and multivariate locally stationary functional time series; see  Remarks \ref{noisydata} and \ref{remark4.1-multiple} in the supplementary  material for details.}

\begin{algorithm}

\begin{itemize} 
\item[(a)] Calculate the the estimate of the long-run variance  $\hat \sigma^2$
in \eqref{2018-5.4}.

\item[(b)]  Calculate the
$(n-2\lceil nb_n\rceil +1)p$-dimensional 
vectors   $\hat{\tilde Z}^{\hat \sigma}_i$
in \eqref{3.11-sigma}.
\item[(c)] For window size $m_n$, 
let $m_n'=2\lfloor m_n/2\rfloor$, define 
\begin{align*}
	\hat S^{\hat \sigma}_{jm_n'}=\frac{1}{\sqrt{m_n'}}	\sum_{r=j}^{j+\lf m_n/2\rf -1 }\hat{\tilde  Z}^{\hat \sigma}_{r}-\frac{1}{\sqrt{m_n'}}\sum_{r=j+\lf m_n/2\rf}^{j+ m_n'-1 }\hat{\tilde  Z}^{\hat \sigma}_{r}
\end{align*} 
and denote by  
$\hat{\vec \varepsilon}^{\ \hat \sigma}_{j:j+m_n',k}$ be the	$p$-dimensional sub-vector of the vector $\hat S^{\hat \sigma}_{jm_n'}$ 
containing its $(k-1)p+1${st} $ - $ $kp${th}
components.

\item[(d)]  \textbf{For} {$r=1, \ldots , B$}  
\smallskip
\textbf{do} Generate i.i.d. $N(0,1)$ random variables $\{R^{(r)}_i\}_{i =1 , \ldots , n-m_n'}$.
~ Calculate 
\begin{align*} 
	T_k^{\hat \sigma,(r)}& =\sum_{j=1}^{2\lceil nb_n \rceil-m_n'}\hat{\vec \varepsilon}^{\ \hat \sigma}_{j:j+m_n',k} R^{(r)}_{k+j-1} ~,~~ k= 1 , \ldots  , n-2\lceil nb_n\rceil+1  , \\
	T^{\hat \sigma,(r)} & =
	\max_{1\leq k\leq n-2\lceil nb_n\rceil+1} |T_k^{\hat \sigma,(r)}|_\infty.
	\nonumber 
\end{align*} 
\textbf{end}

\smallskip
\item [(e)]
Define $T^{\hat \sigma}_{\lf (1-\alpha)B\rf}$ as the empirical $(1-\alpha)$-quantile of the  sample
$T^{\hat \sigma,(1)}, \ldots , T^{\hat \sigma,(B)}$ and 
\begin{align*}
	\hat   L^{\hat \sigma}_2 (u,t)  & = \hat m_{b_n}(u,t) -
	\hat r_2 (u,t),~~~~
	\hat  U^{\hat \sigma}_2 (u,t)  =  \hat m_{b_n}(u,t) +  \hat r_2 (u,t) ,
\end{align*}
where
$$
\hat r_2 (u,t) =  \frac{\hat \sigma(u,t)\sqrt 2 T^{\hat \sigma } _{\lf (1-\alpha)B\rf} }{\sqrt{nb_n}\sqrt{ 2\lceil nb_n\rceil-m'_n}}
$$
{\bf Output:} 
SCS 
\eqref{n1}  with varying width for
the mean function $m$.
\end{itemize}
\caption{}
\label{algorithm3}
\end{algorithm}

\section{Theoretical justification} \label{sec4}
\def\theequation{4.\arabic{equation}}
\setcounter{equation}{0}
In this section, we first present the locally stationary functional time series model for which 
the theoretical results of
this paper are derived (Section \ref{sec41}). 
We also describe under which conditions Algorithm \ref{algorithm1}
and \ref{algorithm3} provide valid asymptotic $(1-\alpha )$ confidence surfaces for the regression function $m$ in model \eqref{1.1} (Section \ref{sec42}  and \ref{sec43}).
Throughout this paper we use the notation  
\begin{equation*}
\Theta(a,b)=a\sqrt{1\vee\log ( (b/a))}
\end{equation*}
for positive constants $a,b$, and  
the notation $a\vee b$ denotes the maximum of the real numbers $ a$ and $b$.

\subsection{Locally stationary processes and physical dependence} \label{sec41}
We begin with an assumption for the mean function $m$ in model \eqref{1.1}.\WCr{\begin{assumption} 
{\rm 
	\label{assreg}  $m \in \mathcal C^{3,0}$.
}
\end{assumption}
}
In fact, in the  proof of Theorem \ref{sigma-asy}, we show that the difference  between $\hat m_{b_n}(u,t)$ and $m(u,t)$ can be uniformly approximated by a weighted
sum of the random variables  $\varepsilon_{1,n}(t) , \ldots , \varepsilon_{n,n}(t)$. As a consequence, an approximation of the form \eqref{delta1} for an increasing number of points $\{t_1,\ldots,t_p\}$ is guaranteed by an appropriate smoothness condition on the error process  $\{ \varepsilon_{i,n}(t) \}_{i=1, \ldots , n}$, which will be introduced next. 

\begin{assumption} 
{\rm 
	\label{asserr}
	The error process has the form
	\begin{equation*}
		\varepsilon_{i,n} (t)  =     G(\tfrac{i}{n},t,\FF_i)~,~~i=1, \ldots , n
	\end{equation*}
	where $\FF_i=( \ldots , \eta_{i-1},\eta_i)$, $(\eta_i)_{i\in \Z} $ is a sequence of independent identically distributed
	random variables in some measurable space $\mathcal S$ 
	and $G: [0,1] \times [0,1] \times {\mathcal S}^{\Z } \to \R$ denotes a filter with the following properties:
	\begin{description}
		\item (1) There exists a constant $t_0>0$ such that
		\begin{equation} \label{2.7a} 
			\sup_{u,t\in [0,1]}\E(t_0\exp(G(u,t,\FF_0)))<\infty.
		\end{equation}
		\item (2)  
		Let $(\eta'_i)_{i\in \mathbb N } $ denote a sequence of independent identically distributed random variables which is  independent of  but has the same distribution as $(\eta_i)_{i\in \mathbb Z}$. Define $\FF^*_i=( \ldots , \eta_{-1},\eta_0', \eta_{1}, \ldots ,  \eta_i)$ and consider for some $q> 2$ the  dependence measure
		\begin{align} \label{2.7}
			\delta_{q}(G, i)=\sup_{u,t\in [0,1]}\|G(u,t,\FF_i)-G(u,t,\FF^*_i)\|_{q}.
		\end{align}
		There exists a constant  $\chi\in(0,1)$ such that for $i\geq 0$
		\begin{align}\label{2.8}
			\delta_{q}(G, i)=O(\chi^i)~.
		\end{align}
		\item (3) For the same  constant $q$ as in (2)  there exists a positive constant $M$ such that 
		\begin{align*}
			\sup_{t\in[0,1],u_1,u_2\in [0,1]}\|G(u_1,t,\FF_i)-G(u_2,t,\FF_i)\|_q\leq M|u_1-u_2|.
		\end{align*}
		\item	(4)   The {\it long-run variance}
		\begin{align} \label{lrv}
			\sigma^2(u,t):=\sum_{k=-\infty}^\infty {\rm Cov} (G(u,t,\FF_0),G(u,t,\FF_k))
		\end{align}
		of the  process $(G(u,t,\FF_i))_{i\in \Z} $ satisfies 
		$$\inf_{u,t\in [0,1]} \sigma^2(u,t)>0.
		$$
	\end{description}	
}
\end{assumption}

\noindent
Assumption \ref{asserr}(2)  requires  that the dependence measure is  geometrically decaying.  Similar results
as presented in this  section can be obtained  under summability assumptions  with substantially more intensive mathematical arguments and complicated notation, see
Remark \ref{remrk3.1}(ii) in the supplemental material for some details. 
Assumption \ref{asserr}(3)  means that the locally stationary functional time series is smooth in $u$,  while the smoothness in $t$ is provided in the next assumption. They are crucial for constructing SCSs of the form \eqref{2.3}.

\begin{assumption}
\label{asssim}
{\rm 
	The filter $G$ in Assumption \ref{asserr}
	is differentiable with respect to  $t$.  If  
	$G_2(u,t,\FF_i)=\frac{\partial }{\partial t}G(u,t,\FF_i),$ $G_2(u,0,\FF_i)=G_2(u,0+,\FF_i)$, $G_2(u,1,\FF_i)=G_2(u,1-,\FF_i)$, we assume 
		that there exists a constant $q^*>2$ such that	for some $\chi \in (0,1)$ and $i\geq0$,
		$$
		\delta_{q^*}(G_2, i)=O(\chi^i).
		$$
}
\end{assumption}

In the online supplement we present several examples of locally stationary functional time series satisfying these assumptions (see Section \ref{secex}).

\subsection{Theoretical analysis of the methodology in Section \ref{sec21}  } \label{sec42}

The bootstrap methodology introduced in Section \ref{sec2} is based on the  
Gaussian approximation \eqref{det1a}, which will be stated rigorously in Theorem \ref{SCB}.
Theorem \ref{Thm2.1-2021}  shows under which conditions
the confidence surface  \eqref{n02} has asymptotic level $(1-\alpha)$.

\begin{theorem}[Justification of Gaussian approximation \eqref{det1a}]
\label{SCB}
Let Assumptions \ref{asskern}, \ref{assreg} -   \ref{asssim} be satisfied and  assume that   $n^{1+a}b_n^9=o(1)$, $n^{a-1}b_n^{-1}=o(1)$ for  some $0<a<4/5$.
Then there exists  a sequence of centered 
$(n-2\lceil nb_n\rceil +1)p$-dimensional centered Gaussian vectors
$\tilde Y_1, \ldots , \tilde Y_{2\lc nb_n\rc-1}$  with the same auto-covariance structure as  the vector $\tilde Z_i$ in \eqref{tildeZi} such that  the distance $\mathfrak{P}_n$ defined in \eqref{det1a} satisfies 
\begin{align*}
\mathfrak{P}_n& 
=O\Big ((nb_n)^{-(1-11\iota)/8} +\WCr{\Theta \Big (\sqrt{nb_n}(b_n^4+\frac{1}{n}), np \Big )} \\
& ~~~~~~~~~~~~~~~~~~~~~~~~~~
+\Theta \big(((np)^{1/q^*}((nb_n)^{-1}+1/p))^{\frac{q^*}{q^*+1}}, n p\big) \Big )
\nonumber 
\end{align*}	
for any sequence $ p \to \infty$ with 
$np = O (\exp(n^{\iota}))$ 	 for some $0\leq \iota <1/11$. 
In particular, for the choice  $p=n^c$ with $c>0$ we have 
$$
\mathfrak{P}_n= o(1)
$$ 
if the constant $q^*$ in   Assumption \ref{asssim}  is sufficiently large.
\end{theorem}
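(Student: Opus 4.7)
\textbf{Proof proposal for Theorem \ref{SCB}.}
The plan is to chain four approximations that convert the continuous supremum $\sup_{u,t}\sqrt{nb_n}|\hat\Delta(u,t)|$ into the $\ell_\infty$-norm of a Gaussian vector with matching long-run covariance: (i) discretize the supremum over $(u,t)\in[b_n,1-b_n]\times[0,1]$ to the grid $\{(l/n,k/p)\}$; (ii) strip off the deterministic bias of $\hat m(u,t)$, reducing the expression to the linear stochastic part $\frac{1}{\sqrt{nb_n}}\sum_i \varepsilon_{i,n}(k/p) K((i/n-l/n)/b_n)$; (iii) apply the algebraic identity \eqref{tilde-Zi-max} to rewrite this maximum as $|\frac{1}{\sqrt{nb_n}}\sum_{i=1}^{2\lceil nb_n\rceil-1}\tilde Z_i|_\infty$; (iv) invoke a Gaussian approximation theorem for the maximum norm of a sum of weakly dependent high-dimensional vectors in the spirit of \cite{zhang2018gaussian} to couple $\sum_i \tilde Z_i$ with $\sum_i \tilde Y_i$. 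The three terms inside $\Theta(\cdot,\cdot)$ arise when the resulting approximation-in-probability bounds are converted to Kolmogorov distance via a Gaussian anti-concentration inequality for the $\ell_\infty$-norm of the surrogate $|\frac{1}{\sqrt{nb_n}}\sum_{i=1}^{2\lceil nb_n\rceil-1}\tilde Y_i|_\infty$, which produces the characteristic factor $\sqrt{1\vee \log(np/a)}$ on an error of size $a$.

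For step (i), the Lipschitz continuity of $K''$ in Assumption \ref{asskern} and the compactness of its support yield Lipschitz continuity of $u\mapsto \hat m(u,t)$ on the scale $b_n$, so discretization in $u$ to $\{l/n\}$ costs only a negligible term (captured by the $1/(nb_n)$ piece). Discretization in $t$ to the grid of $p$ points is controlled using Assumption \ref{asssim}: the filter derivative $G_2$ has geometrically decaying physical dependence $\delta_{q^*}(G_2,i)=O(\chi^i)$, so a maximal inequality applied to the $p$-point residual modulus of continuity yields an error of order $(np)^{1/q^*}/p$; combined with the $u$-discretization this gives the factor $((np)^{1/q^*}((nb_n)^{-1}+1/p))^{q^*/(q^*+1)}$ after balancing the truncation moment. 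For step (ii), Assumption \ref{assreg} (fourth-order smoothness of $m$ in $u$) together with the vanishing second moment $\int K(v)v^2\,dv=0$ in Assumption \ref{asskern} gives a Taylor bias of order $b_n^4$, so $\sqrt{nb_n}$ times the bias contributes $\sqrt{nb_n}b_n^4$, as reflected in the second $\Theta(\cdot,\cdot)$ term.

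Step (iv) is the analytical core. The reformulation \eqref{tilde-Zi-max} is essential: the original high-dimensional vector $\bar Z_i$ has $(n-2\lceil nb_n\rceil+1)p$ coordinates but only $\sim pnb_n$ non-vanishing ones, which would destroy the rates in any off-the-shelf Gaussian approximation. After passing to $\tilde Z_i$ the effective series length drops to $N\sim 2nb_n$ while the dimension stays at $Np\sim n b_n p$, and the physical-dependence coefficients of $(\tilde Z_i)_i$ inherit the geometric decay from Assumption \ref{asserr}(2) (the kernel weights enter only as bounded multiplicative factors). I would then apply a high-dimensional Gaussian approximation for dependent vectors under physical-dependence, which, with the lower bound $\inf_{u,t}\sigma^2(u,t)>0$ of Assumption \ref{asserr}(4) controlling the smallest eigenvalue of the long-run covariance, yields a coupling error of order $N^{-(1-C\iota)/8}$; plugging $N\asymp nb_n$ and the logarithmic dimension $\log(np)=O(n^\iota)$ produces the leading term $(nb_n)^{-(1-11\iota)/8}$. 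The conditions $n^{1+a}b_n^9=o(1)$ and $n^{a-1}b_n^{-1}=o(1)$ then ensure that $\sqrt{nb_n}b_n^4\to 0$ and $nb_n\to\infty$ fast enough, and for $p=n^c$ all three error terms are $o(1)$ provided $q^*$ is taken sufficiently large to beat the $(np)^{1/q^*}$ factor.

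The hardest part will be step (iv), specifically handling the non-standard dependence structure introduced by the rearrangement \eqref{tildeZi}. The vectors $\tilde Z_i$ are not a stationary $m$-dependent sequence: different $p$-blocks inside $\tilde Z_i$ come from different kernel lags $K((i-l)/(nb_n))$ and different spatial coordinates share randomness across $i$ in an asymmetric pattern. One therefore cannot directly invoke the CLT of \cite{zhang2018gaussian} and must first approximate $\tilde Z_i$ by an $M$-dependent truncation $\tilde Z_i^{(M)}$, bound the $\ell_\infty$-tail of $\sum(\tilde Z_i-\tilde Z_i^{(M)})$ via Nagaev-type maximal inequalities (this is where $(np)^{1/q^*}$ enters through $q^*$ moments and where the trade-off exponent $q^*/(q^*+1)$ arises), apply the Gaussian approximation to the $M$-dependent sequence, and verify that the covariance of the coupled Gaussians coincides with that of the original $\tilde Z_i$. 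Converting the resulting smooth-test-function bound into a uniform-in-$x$ Kolmogorov bound through anti-concentration of the Gaussian maximum, while simultaneously tracking the $\log(np)$ factors coming from the high dimension, is the most delicate bookkeeping in the argument.
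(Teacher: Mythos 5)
Your four-step outline (discretize, strip bias, rearrange via \eqref{tilde-Zi-max}, Gaussian-approximate) matches the architecture of the paper's proof, which first establishes Proposition \ref{Grid-Result} for the grid and then extends to the continuum via the modulus-of-continuity bound $\|\tilde W_n\|_{q^*}=O((np)^{1/q^*}((nb_n)^{-1}+1/p))$ from \eqref{partialnorm2}. Steps (i)–(iii) of your proposal are essentially what the paper does.

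Your description of step (iv), however, contains a misconception that would send you down an unnecessary detour. You assert that the vectors $\tilde Z_i$ have too irregular a structure for \cite{zhang2018gaussian} to apply directly and that one must manually insert an $M$-dependent truncation plus a Nagaev bound, attributing the $(np)^{1/q^*}$ factor and the exponent $q^*/(q^*+1)$ to this step. That is not where these quantities live. In the paper's argument, Corollary 2.2 of \cite{zhang2018gaussian} is invoked \emph{directly} on the rearranged sum $\sum_{i=1}^{2\lceil nb_n\rceil-1}\tilde Z_i$: the coordinate-wise physical dependence of $\tilde Z_{i,s}$ still decays like $\chi^j$ because each coordinate is a bounded kernel weight times a single evaluation $G(\cdot,\cdot,\FF_{i+k-1})$, so Assumption \ref{asserr}(2) transfers verbatim; the uniform lower and upper variance bounds follow from Lemma \ref{Lemma1.3}; and the sub-exponential tail is \eqref{2.7a}. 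That alone gives the clean rate $(nb_n)^{-(1-11\iota)/8}$ with no $q^*$ in it. The $q^*$-dependent factor and the $q^*/(q^*+1)$ exponent come \emph{entirely} from your step (i): the $q^*$-th moment bound on $\tilde W_n$ combined with Markov's inequality and the Gaussian anti-concentration Lemma \ref{anti}, optimized over $\delta$ by solving $\Theta(\delta,np)\asymp(\|\tilde W_n\|_{q^*}/\delta)^{q^*}$. Likewise the $1/(nb_n)$ inside the second $\Theta$-term is not a grid-discretization cost but part of the deterministic bias bound \eqref{mean-inference} (the Riemann-sum error of the kernel convolution), which your step (ii) should carry along with $b_n^4$. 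Getting these attributions straight matters: if you tried to prove the Zhang–Cheng step by hand with an explicit $M$-dependent truncation, you would duplicate work already done inside their theorem and, more seriously, would risk introducing a spurious $\log$ or $(np)^{1/q}$ loss at the Gaussian-approximation stage that the paper's statement does not have.
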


\WCr{In Section \ref{CheckThm1}  of the supplementary material we investigate  the finite sample properties of the approximation in  Theorem \ref{SCB} by means of a simulation study.}  Moreover, Theorem \ref{SCB} is the main ingredient to prove the validity of the bootstrap SCS $ \mathcal C_n$ defined 
in \eqref{n02}  by Algorithm \ref{algorithm1}. More precisely, 
we have the following result.


\begin{theorem}
\label{Thm2.1-2021} 
Assume that the conditions of Theorem \ref{SCB} hold. \WCr{ Recall that $m_n$ is the block size defined in \eqref{mimic} }
Define
$$
\vartheta_n=\frac{\log^2n}{m_n}+\frac{m_n\log n}{nb_n}+\sqrt{\frac{m_n}{nb_n}}(np)^{4/q}.
$$
If 
$ p \to \infty$ such that  
$np = O (\exp(n^{\iota}))$ 	 for some $0\leq \iota <1/11$
and 
$$\vartheta^{1/3}_n
\Big \{1\vee \log \Big(\frac{np}{\vartheta_n}\Big)\Big\}^{2/3}+\Theta \Big (\Big(\sqrt{m_n\log np} \Big (\frac{1}{\sqrt{nb_n}}+b_n^3 \Big )(np)^{\frac{1}{q}}\Big)^{q/(q+1)},np \Big )=o(1),$$
then the SCS  \eqref{n02} constructed by  Algorithm \ref{algorithm1} satisfies
\begin{align*}
\lim_{n\rightarrow \infty} \lim_{B\rightarrow \infty}  \p(m \in \mathcal C_n  ~ |~{\cal F}_n)=1-\alpha
\end{align*}
in probability.
\end{theorem}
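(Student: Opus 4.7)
The plan is to reduce the coverage event to a comparison between $\hat\Delta_{b_n}$ and a bootstrap quantile, then match both sides against the same Gaussian target: the real-data side via Theorem \ref{SCB}, and the bootstrap side via a conditional Gaussian comparison. Unpacking the definitions of $\hat L_1,\hat U_1,\hat r_1$ from Algorithm \ref{algorithm1}, the event $\{m\in\mathcal C_n\}$ equals $\{\sup_{u\in[b_n,1-b_n],t\in[0,1]}\sqrt{nb_n}|\hat m(u,t)-m(u,t)|\le \sqrt 2\,T_{\lfloor(1-\alpha)B\rfloor}/\sqrt{2\lceil nb_n\rceil-m'_n}\}$, and by the discrete-grid approximation used in \eqref{delta11} (valid under Assumptions \ref{assreg}--\ref{asssim} and $np=O(\exp(n^{\iota}))$) the supremum on the left is $o_\p(1)$-close to $\hat\Delta_{b_n}$. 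On the real-data side, Theorem \ref{SCB} provides centred Gaussian vectors $\tilde Y_1,\ldots,\tilde Y_{2\lceil nb_n\rceil-1}$ with the auto-covariance of $\tilde Z_i$ such that $\hat\Delta_{b_n}$ is within $o(1)$ in Kolmogorov distance of $|\tilde W_n|_\infty$, where $\tilde W_n:=(nb_n)^{-1/2}\sum_i\tilde Y_i$; let $q^*_{1-\alpha}$ denote the $(1-\alpha)$-quantile of $|\tilde W_n|_\infty$. Passing $B\to\infty$ is trivial by the conditional i.i.d.\ of the $T^{(r)}$, so it remains to show that the conditional $(1-\alpha)$-quantile of $\sqrt 2\,T^{(1)}/\sqrt{2\lceil nb_n\rceil-m'_n}$ given $\mathcal F_n$ converges to $q^*_{1-\alpha}$ in probability.

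Conditional on $\mathcal F_n$, the process $(T^{(r)}_k)_k$ defined in \eqref{3.13} is jointly Gaussian, being a linear functional of the i.i.d.\ $N(0,1)$ multipliers $\{R^{(r)}_i\}$. The Chernozhukov--Chetverikov--Kato (CCK) Gaussian comparison theorem for maxima then reduces the claim to bounding the entrywise $\ell^\infty$ error between two covariance matrices: that of the rescaled bootstrap vector, and that of $\tilde W_n$. This matching suffers from two contaminations. First, the long-run covariance of the error process is estimated via block-differences $\hat S_{jm'_n}$ of length $m'_n/2$, producing a bias-variance trade-off with bias $O(\log^2 n / m_n)$ and variance $O(m_n\log n/(nb_n))$---the first two terms of $\vartheta_n$---via the physical-dependence decay in Assumptions \ref{asserr}(2)--(3). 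Second, the replacement of true errors by residuals $\hat{\vec\varepsilon}_i=\vec\varepsilon_i-(\hat m_{d_n}-m)(\tfrac{i}{n},\cdot)$, with the local-linear uniform rate $O_\p((nd_n)^{-1/2}+d_n^2)$ guaranteed by $nd_n^3\to\infty$ and $nd_n^6=o(1)$, contaminates the covariance at rate $\sqrt{m_n/(nb_n)}(np)^{4/q}$---the third term of $\vartheta_n$. The difference-of-block-sums form in \eqref{det41} is crucial here: it cancels the leading drift of $\hat m_{d_n}-m$, so that only the stochastic component of the residual error survives in the covariance.

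The CCK anti-concentration bound for Gaussian maxima, applicable because $\inf_{u,t}\sigma^2(u,t)>0$ by Assumption \ref{asserr}(4), combines with the covariance matching error $\vartheta_n$ to yield an $\vartheta_n^{1/3}\{1\vee\log(np/\vartheta_n)\}^{2/3}$ bound on the Kolmogorov distance between the bootstrap distribution and the distribution of $|\tilde W_n|_\infty$---precisely the first term in the theorem's rate hypothesis. The residual-error contribution enters with the $q/(q+1)$ exponent after a moment truncation at level $(np)^{1/q}$, giving the second term of the hypothesis. Therefore the conditional Kolmogorov distance is $o_\p(1)$, and anti-concentration transfers this to quantile convergence in probability, completing the proof. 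The most delicate technical step is the residual contamination: establishing uniform control, over the $(n-2\lceil nb_n\rceil+1)^2 p^2$ entries of the bootstrap covariance matrix, of how the local-linear fitting error propagates into the high-dimensional long-run covariance estimator in the regime $np\to\infty$; this requires simultaneously using the physical-dependence decay in Assumption \ref{asserr}(2), the $t$-smoothness of Assumption \ref{asssim} to control increments across the $p$ grid points, and the moment truncation converting polynomial moments into exponential tails.
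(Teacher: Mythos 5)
Your proposal follows essentially the same route as the paper: reduce the coverage event to comparing the maximum deviation against a bootstrap quantile, approximate the data side via Theorem \ref{SCB}, exploit the conditional Gaussianity of the multiplier process, match conditional covariances, and close with anti-concentration. The structure is right. But you misattribute the third term of $\vartheta_n$, and this mixes up where the various rate conditions come from.

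In the paper's argument, $\vartheta_n$ is \emph{entirely} an oracle quantity: it bounds the $\|\cdot\|_{q/2}$-norm of the entrywise gap between $\sigma^{\tilde Y}$ (the target Gaussian covariance) and $\sigma^{\tilde T^\diamond}$, where $\tilde T^\diamond$ is the bootstrap statistic built from the \emph{true} errors $\tilde Z_i$, not the residuals $\hat{\tilde Z}_i$. The first two terms $\log^2 n/m_n + m_n\log n/(nb_n)$ come from matching $\E\sigma^{\tilde T^\diamond}$ to $\sigma^{\tilde Y}$ (the bias of the block-sum long-run-variance estimator), while the third term $\sqrt{m_n/(nb_n)}\,(np)^{4/q}$ comes from the stochastic fluctuation $\sigma^{\tilde T^\diamond}-\E\sigma^{\tilde T^\diamond}$, controlled uniformly over the $O((np)^2)$ matrix entries by a moment bound followed by a union bound over $(np)^4$ pairs---hence the exponent $4/q$. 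None of this involves the local-linear residuals. The residual replacement contributes only through $|T^\diamond - \tilde T^\diamond|_\infty$, and this produces the \emph{separate} additive term $\Theta\big((\sqrt{m_n\log np}\,((nd_n)^{-1/2}+d_n^2)(np)^{1/q})^{q/(q+1)},np\big)$ in the rate condition, obtained by the Gaussian concentration plus Markov plus Lemma \ref{anti} argument you correctly sketch at the end. By assigning the $(np)^{4/q}$ term to residuals you double-count the residual effect and conflate it with the sample-covariance variance, which is a genuine conceptual slip; one practical consequence is that you would not realise the $(np)^{4/q}$ term is unavoidable even when the oracle errors $\tilde Z_i$ are available. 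A smaller overstatement: the difference-of-block-sums form reduces, but does not eliminate, the local-linear bias contribution---the $d_n^2$ in the residual rate is precisely that surviving bias.
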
 
Theorem \ref{Thm2.1-2021} can also be built on alternative assumptions, such as the polynomial decaying instead of geometric decaying dependence measure as in	Assumption \ref{asserr}(2). Due to page limit, we relegate further discussions on conditions to Section \ref{remrk3.1} of the supplemental material.

\subsection{Theoretical analysis of the methodology in Section \ref{sec22} } \label{sec43}

In this section  we will prove that the 
surface \eqref{n1} defines an asymptotic $(1-\alpha)$ confidence surface with varying width for the mean function $m$. 
If the long-run variance in \eqref{lrv} would be known, 
a confidence surface 
could be based on the 
``normalized'' maximum deviation of 
\begin{equation*}
\label{det1}\hat \Delta^\sigma (u,t)= \frac{\hat m_{b_n}(u,t)-m(u,t)}{\sigma(u,t)}.
\end{equation*}
Therefore we will  derive a 
Gaussian approximation 
for the vector
$\big ( \hat \Delta^\sigma (\frac{l}{n},\frac{k}{p})
\big ) $, ${l=1, \ldots  ,n ; k=1, \ldots , p}$
first  	and define  
for $1\leq i\leq n$ the $p$ dimensional vector
\begin{align*} 
Z^\sigma_{i}(u) & = (Z^\sigma_{i,1}(u), \ldots , Z^\sigma_{i,p}(u))^\top \\
\nonumber 
& = K \Big (\frac{\tfrac{i}{n}-u}{b_n} \Big ) 	\Big (\varepsilon_{i,n}^\sigma (\tfrac{1}{p})
,\varepsilon_{i,n}^\sigma (\tfrac{2}{p})
,\ldots , \varepsilon_{i,n}^\sigma (\tfrac{p-1}{p}), \varepsilon_{i,n}^\sigma (1)
\Big )^\top,
\end{align*}
where $\varepsilon_{i,n}^\sigma (t)=	
\varepsilon_{i,n}(t)/\sigma(\tfrac{i}{n},t)$.
Similarly as in Section \ref{sec22}  we consider the $p$-dimensional vector  
\begin{equation*}
Z^\sigma_{i,l}=Z^\sigma_i(\tfrac{l}{n}) = (Z^\sigma_{i,l,1}, \ldots Z^\sigma_{i,l,p})^\top~,
\end{equation*}
where  
$$
Z^\sigma_{i,l,k} = \varepsilon_{i,n}^\sigma (\tfrac{k}{p})K \Big (\frac{\tfrac{i}{n}-\tfrac{l}{n}}{b_n}\Big) ~~~~
(1\le k\le p).
$$
Finally, we define 
the $(n-2\lceil nb_n\rceil +1)p$-dimensional vectors 
$\tilde Z^\sigma_1 , \ldots , \tilde {Z}^\sigma_{2\lceil nb \rceil-1}$by 
\begin{align}\label{tildeZisigma}
\tilde Z^\sigma_j= \big (
Z_{j,\lceil nb_n\rceil}^{\sigma,\top},
Z_{j+1,\lceil nb_n\rceil+1}^{\sigma,\top}, \ldots ,
Z_{n-2\lceil nb_n\rceil+j, n-\lceil nb_n \rceil  }^{\sigma,\top}
\big)^\top  ~
\end{align}
and obtain the following result.
\begin{theorem}\label{sigma-asy}
Let the Assumptions of Theorem \ref{SCB} be satisfied and  assume that  the partial derivative 
$\frac{\partial^2 \sigma(u,t)}{\partial u\partial t}$ exists and is bounded  on $[0,1]^2$. Then  
there exist 
$(n-2\lceil nb_n\rceil +1)p$-dimensional centered Gaussian vectors
$\tilde Y^\sigma_1, \ldots , \tilde Y^\sigma_{2\lc nb_n\rc-1}$  with the same
auto-covariance structure as  the vector $\tilde Z^\sigma_i$ in \eqref{tildeZisigma} such that 
\begin{align*}  
\mathfrak{P}^\sigma_n& :=
\sup_{x\in \mathbb R} \Big |\p \Big ( 
\max_{b_n \leq u\leq 1-b_n,0\leq t\leq 1}\sqrt{nb_n}|\hat \Delta^\sigma(u,t)|\leq x \Big )-\p 
\Big (\Big |\frac{1}{\sqrt{nb_n}}\sum_{i=1}^{2\lc nb_n\rc-1}\tilde Y^\sigma_i \Big  |_\infty\leq x\Big )\Big | \\
& =O\Big ((nb_n)^{-(1-11\iota)/8}  +\Theta \big (\sqrt{nb_n}(b_n^4+\frac{1}{n}), np \big )
\\
\nonumber &
+\Theta \big  (\big[(np)^{1/q^*}((nb_n)^{-1}+1/p)\big]^{\frac{q^*}{q^*+1}}, n p\big)+\Theta \big  (b_n^{\frac{q-2}{q+1}},np \big) \Big )
\nonumber 
\end{align*}
for any sequence $ p \to \infty$ with 
$np = O (\exp(n^{\iota}))$ 	 for some $0\leq \iota <1/11$. 
In particular, for the choice $p=n^c$ for any $c>0$ we have 
$
\mathfrak{P}^\sigma_n= o(1)
$ 
if the constant  $q^*$ in Assumption \ref{asssim} is sufficiently large, such that $$\Theta \big  (\big[(np)^{1/q^*}((nb_n)^{-1}+1/p)\big]^{\frac{q^*}{q^*+1}}, n p\big)=o(1).$$ 
\end{theorem}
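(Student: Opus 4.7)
The plan is to adapt the proof of Theorem \ref{SCB} to the normalized quantity $\hat \Delta^\sigma$, introducing one extra step that accounts for dividing by $\sigma(u,t)$. First, a standard Taylor/Riemann expansion using Assumption \ref{assreg} and the second-order vanishing moment of $K$ (Assumption \ref{asskern}) gives
$$\hat m(u,t)-m(u,t) = \frac{1}{nb_n}\sum_{i=1}^n \varepsilon_{i,n}(t)K\bigl(\tfrac{i/n-u}{b_n}\bigr) + R_n(u,t), \quad \sup_{u,t}|R_n(u,t)|=O\bigl(b_n^4+(nb_n)^{-1}\bigr).$$
Dividing by $\sigma(u,t)\ge \inf\sigma>0$ (from Assumption \ref{asserr}(4)) and scaling by $\sqrt{nb_n}$, the bias contributes $\Theta(\sqrt{nb_n}(b_n^4+(nb_n)^{-1}),np)$ after a Gaussian anti-concentration step, exactly as in Theorem \ref{SCB}.

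The new ingredient is replacing $1/\sigma(u,t)$ inside the stochastic sum by $1/\sigma(i/n,t)$. Since $\partial^2\sigma/(\partial u\partial t)$ is bounded and $\inf\sigma>0$, the map $u\mapsto 1/\sigma(u,t)$ is Lipschitz uniformly in $t$, so $|1/\sigma(u,t)-1/\sigma(i/n,t)|=O(b_n)$ on the support of $K((i/n-u)/b_n)$. Setting
$$D_{l,k}:=\frac{1}{\sqrt{nb_n}}\sum_{i=1}^n\Bigl(\tfrac{1}{\sigma(l/n,k/p)}-\tfrac{1}{\sigma(i/n,k/p)}\Bigr)\varepsilon_{i,n}(\tfrac{k}{p})K\bigl(\tfrac{i/n-l/n}{b_n}\bigr),$$
a Rosenthal/Nagaev-type inequality tailored to the physical dependence in Assumption \ref{asserr}(2) shows $\|D_{l,k}\|_q=O(b_n)$, uniformly in $(l,k)$. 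A truncation argument (truncating $\max_{l,k}|D_{l,k}|$ at a level $\tau_n$ that balances the tail mass $np(b_n/\tau_n)^q$ from Markov against the Gaussian anti-concentration bound $\tau_n\sqrt{\log(np)}$) yields, after optimization in $\tau_n$, the additional error term $\Theta(b_n^{(q-2)/(q+1)},np)$.

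Once Steps 1-2 are in place, the normalized statistic is uniformly approximated on the grid by $|\frac{1}{\sqrt{nb_n}}\sum_{i=1}^{2\lceil nb_n\rceil-1}\tilde Z_i^\sigma|_\infty$ via the same sparsity-to-density reshuffling that produced \eqref{tilde-Zi-max}. The normalized process $\varepsilon_{i,n}^\sigma(t)=\varepsilon_{i,n}(t)/\sigma(i/n,t)$ inherits Assumptions \ref{asserr} and \ref{asssim} with the same geometric dependence rate (up to multiplicative constants), because $1/\sigma$ is bounded and smooth in both arguments. Hence Theorem \ref{SCB} applies verbatim to the $\tilde Z_i^\sigma$ and delivers the rate $(nb_n)^{-(1-11\iota)/8}+\Theta(\sqrt{nb_n}(b_n^4+(nb_n)^{-1}),np)+\Theta([(np)^{1/q^*}((nb_n)^{-1}+p^{-1})]^{q^*/(q^*+1)},np)$ after discretization.

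The main obstacle is Step 2: deriving the sharp exponent $(q-2)/(q+1)$ requires a careful interplay between (i) a dependent Rosenthal inequality for $D_{l,k}$ that exploits the kernel localisation to squeeze an extra factor $b_n$ out of each summand, (ii) a union bound over the $np$ grid points, and (iii) the Gaussian anti-concentration lemma of \cite{chernozhukov2013gaussian}. Standard Bernstein-type concentration is unavailable because the summands are physically dependent and only possess a $q$-th moment, so the truncation level must be chosen as a function of $q$ to match the tail-vs.-anti-concentration trade-off and reproduce the exponent stated in the theorem. All remaining steps are direct adaptations of the arguments that justify Theorem \ref{SCB}.
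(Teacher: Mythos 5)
Your outline reproduces the paper's three-step structure correctly (bias, $\sigma$-replacement error, then the unnormalized Gaussian approximation applied to the re-weighted process), and your bias step and the observation that $\varepsilon_{i,n}/\sigma(i/n,\cdot)$ inherits the dependence assumptions are both fine. The gap is in Step~2, and your own closing paragraph shows the wrong fix.

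You bound $\|D_{l,k}\|_q=O(b_n)$ \emph{pointwise} and then union-bound over the $np$ grid points, giving tail mass $np(b_n/\tau)^q$. Balancing this against the anti-concentration term $\Theta(\tau,np)$ yields $\tau\asymp(npb_n^q)^{1/(q+1)}$, i.e.\ an error of order $(npb_n^2)^{1/(q+1)}\cdot b_n^{(q-2)/(q+1)}$. The extra factor $(npb_n^2)^{1/(q+1)}$ does \emph{not} vanish in the regimes allowed by the theorem (e.g.\ $p=n^c$, $b_n=n^{-\gamma}$ with $1+c>2\gamma$), so your recipe cannot recover the stated $\Theta\big(b_n^{(q-2)/(q+1)},np\big)$ term. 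The ``extra $b_n$ from kernel localisation'' you gesture at is already baked into the $O(b_n)$ pointwise bound; there is no additional $b_n$ to squeeze out, and a union bound over $np$ points is irreducibly lossy here.

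What the paper does instead is bound the $L^q$ norm of the \emph{supremum} of the discrepancy process over the full continuum $[b_n,1-b_n]\times[0,1]$, writing $\Psi^\sigma(u,t)=\tfrac{1}{nb_n}\sum_i G(\tfrac{i}{n},t,\FF_i)K(\cdot)/\sigma(u,t)$ and $\Phi^\sigma(u,t)=\tfrac{1}{nb_n}\sum_i G(\tfrac{i}{n},t,\FF_i)K(\cdot)/\sigma(\tfrac{i}{n},t)$, and invoking a maximal inequality of the type in Proposition~B.2/Lemma~B.3 of Dette, Wu and Zhou (2019) to get
$\big\|\sup_{u,t}\sqrt{nb_n}\,|\Phi^\sigma-\Psi^\sigma|\big\|_q=O(b_n^{1-2/q})$ (equation \eqref{6-17-118}). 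A single application of Markov then gives $\p(\sup>\delta)=O(b_n^{q-2}/\delta^q)$ with no $np$ prefactor, and balancing against $\Theta(\delta,np)$ produces exactly $\delta=b_n^{(q-2)/(q+1)}$ and the stated rate. So the missing ingredient in your write-up is a chaining/derivative-based sup-moment bound for $\Phi^\sigma-\Psi^\sigma$, in place of the pointwise-plus-union argument.
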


The next result shows that 	the estimator $\hat \sigma$ 
defined by \eqref{2018-5.4}  
is  uniformly  consistent. Thus
the unknown long-run variance $\sigma^2$  in  Theorem \ref{sigma-asy}
can be replaced by $\hat \sigma^2$ and the result
can be used to prove the validity 
of the confidence surface \eqref{n1} defined by  Algorithm \ref{algorithm3}.

\begin{proposition}\label{prop-lrv}
Let the assumptions of Theorem \ref{SCB} be satisfied and  assume that  the partial derivative 
$\frac{\partial^2 \sigma(u,t)}{\partial^2 u}$ exists on the square $[0,1]^2$, is bounded and  Lipschitz continuous in $u \in (0,1)$. If  $w\rightarrow \infty$, $w=o(n^{2/5})$, $w=o(n\tau_n)$, $\tau_n\rightarrow 0$ and $n\tau_n\rightarrow \infty$ we have that
\begin{align*}
& \Big \|\sup_{\substack{u\in [\gamma_n,1-\gamma_n] \\ t\in[0,1]}}
|\hat \sigma^2(u,t)-\sigma^2(u,t)| \Big \|_{q'} =O \Big (g_n+\tau_n^2
\Big  ),\\
& \Big \|\sup_{\substack {u\in [0,\gamma_n)\cup(1-\gamma_n,1] \\ t\in[0,1]
}}|\hat \sigma^2(u,t)-\sigma^2(u,t)| \Big \|_{q'}
= O(g_n+\tau_n)
~,
\end{align*}
where
\begin{equation} \label{gan}
g_n=\frac{w^{5/2}}{n}\tau_n^{-1/q'}+w^{1/2}n^{-1/2}\tau_n^{-1/2-2/q'}+w^{-1}~,
\end{equation}
$\gamma_n=\tau_n+w/n$, $q'=\min(q,q^*)$ and  $q,q^*$ are defined in Assumptions \ref{asserr} and \ref{asssim}, respectively.
\end{proposition}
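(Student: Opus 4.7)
The plan is to write $\hat\sigma^2(u,t) - \sigma^2(u,t) = D_1(u,t) + D_2(u,t) + D_3(u,t) + D_4(u,t)$ as a sum of four pieces corresponding, respectively, to (i) the deterministic effect of the mean $m$ in the raw data (rather than the centred errors), (ii) the bias of the block–difference variance estimator relative to the local long run variance, (iii) the kernel smoothing bias in the $u$ direction, and (iv) the centred stochastic fluctuation. Each piece is then bounded in $L^{q'}$ uniformly over $(u,t) \in [0,1]^2$, with $q'=\min(q,q^*)$ as in the statement.

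For $D_1$, the block difference $\Delta_j(t)$ is linear in the $X_{i,n}(t)$, so replacing $X_{i,n}(t)$ by $m(i/n,t)$ gives a deterministic shift; a Taylor expansion under Assumption \ref{assreg} shows this shift is $O(w/n)$ uniformly in $t$. Expanding the square produces a purely deterministic $O(w^3/n^2)$ term and a cross term controlled by a uniform-in-$t$ bound on $\sqrt{w}$ times the centred version of $\Delta_j$ (obtained from Assumptions \ref{asserr} and \ref{asssim}); both are dominated by the other error contributions under the stated $w=o(n^{2/5})$. For $D_2$, I would expand $\mathbb{E}[w(\Delta_j^{\varepsilon})^2(t)/2]$ as a partial sum of autocovariances of $(G(j/n,t,\FF_i))_i$; the geometric decay of $\delta_q(G,i)$ in Assumption \ref{asserr}(2) together with the Lipschitz-in-$u$ property yields a bias of order $w^{-1}$, matching the $w^{-1}$ term in $g_n$. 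For $D_3$, a Taylor expansion of $\sigma^2(u,t)$ in $u$ using the boundedness of $\partial_u^2\sigma^2$ combined with the second-order property of the Epanechnikov kernel $H$ gives $O(\tau_n^2)$ on the interior $[\gamma_n,1-\gamma_n]$, while the convention $\hat\sigma^2(u,t)=\hat\sigma^2(w/n,t)$ (or the symmetric endpoint) in the boundary layer produces only $O(\tau_n)$, which explains the gap between the two rates in the proposition.

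The bulk of the work lies in the stochastic term $D_4$. For a fixed $(u,t)$ one obtains a $L^{q'}$ bound on the weighted sum of centred squared $w$-block increments by a Rosenthal/Burkholder-type inequality tailored to the physical dependence framework; the pointwise rate is of order $w^{1/2}(n\tau_n)^{-1/2}$ and, after accounting for the moments of the squared block sums and the kernel normalisation, one extracts the $w^{5/2}/n$ scaling that appears in $g_n$. To upgrade the pointwise bound to a uniform one, I would discretise $[0,1]^2$ on a grid of size of order $(n/\tau_n)\times n$, apply Markov's inequality to the $q'$-moments, and union-bound; this is what generates the $\tau_n^{-1/q'}$ and $\tau_n^{-1/2-2/q'}$ factors in $g_n$. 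The continuous increments between grid points are then controlled by Lipschitz arguments: in the $u$ direction the normalised weights $\bar\omega(u,j)$ are smooth, while in the $t$ direction one uses Assumption \ref{asssim}, which, via the geometric decay of $\delta_{q^*}(G_2,i)$, provides the required $L^{q^*}$-Lipschitz property of $\Delta_j(t)$ in $t$.

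The principal obstacle is precisely this uniform control of $D_4$: the block–difference estimator is a quadratic functional of the error process, and one needs a moment bound that is simultaneously sharp in $w$, compatible with the physical dependence structure, and uniform in $t$. The interplay between Assumptions \ref{asserr}(2) (for $G$ itself) and \ref{asssim} (for $\partial_t G$) with the exponent $q'=\min(q,q^*)$ is essential; without the $t$-differentiability the chaining step in $t$ breaks down, and without geometric decay the Rosenthal bound would not absorb the grid cardinality introduced by the union bound.
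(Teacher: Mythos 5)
Your decomposition into a mean term $D_1$, variance bias $D_2$, kernel smoothing bias $D_3$, and stochastic fluctuation $D_4$ parallels the paper's own proof, which works with $\tilde\sigma^2$ (the analogue of $\hat\sigma^2$ built from the centred errors $G$) and splits $\hat\sigma^2 - \sigma^2$ as $(\hat\sigma^2 - \tilde\sigma^2) + (\tilde\sigma^2 - \E\tilde\sigma^2) + (\E\tilde\sigma^2 - \sigma^2)$. Your $D_2$ and $D_3$ together match the last piece, $D_4$ matches the middle piece, and $D_1$ matches the first. So the skeleton is right.

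There is, however, a genuine misattribution in where the leading term $\frac{w^{5/2}}{n}\tau_n^{-1/q'}$ of $g_n$ comes from, and this is not a cosmetic point. When you expand $\hat\sigma^2 - \tilde\sigma^2$ you get the deterministic $O(w^3/n^2)$ piece \emph{and} the cross term $\sum_j w\,\tilde\Delta_j(t) C_j(t)\bar\omega(u,j)$ with $C_j = O(w/n)$. Using the uniform-in-$t$ bound $\|\sup_t|\tilde\Delta_j(t)|\|_{q'} = O(\sqrt{w})$ and bounding the $u$-supremum via the derivative of the $\bar\omega$ weights costs a $\tau_n^{-1/q'}$ factor, giving exactly $\frac{w^{5/2}}{n}\tau_n^{-1/q'}$. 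This is the first term of $g_n$ and is in general \emph{not} dominated by the other contributions; the condition $w = o(n^{2/5})$ only kills the purely deterministic $O(w^3/n^2)$ piece relative to it. You assert that both pieces are dominated, which is false, and then claim $D_4$ produces the $w^{5/2}/n$ scaling, which is also false: the sharp Burkholder/Rosenthal bound for the centred fluctuation $D_4 = \tilde\sigma^2 - \E\tilde\sigma^2$ has pointwise rate $w^{1/2}(n\tau_n)^{-1/2}$ and, after the uniform upgrade via moment bounds on the partial derivatives in $u$ and $t$ (which is the role played by Assumption \ref{asssim}), yields the second term $w^{1/2}n^{-1/2}\tau_n^{-1/2-2/q'}$ only. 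So your argument, as written, would leave the $\frac{w^{5/2}}{n}\tau_n^{-1/q'}$ piece unaccounted for.

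A minor additional caution on the uniform step for $D_4$: a naive grid of cardinality $(n/\tau_n)\times n$ followed by Markov and a union bound over $q'$-moments would give a penalty of order $\big((n/\tau_n)\,n\big)^{1/q'}$, which is worse than the $\tau_n^{-2/q'}$ that actually appears. The paper instead uses moment bounds on $\partial_u$ and $\partial^2_{u t}$ of the process together with a Sobolev-embedding style argument (Proposition B.2 in \cite{dette2019change}), which is dimension-free in $n$ and only pays a $\tau_n^{-2/q'}$ factor coming from the derivative of the kernel weights. Your chaining-in-between-grid-points step could in principle reproduce this, but you would need to carry the derivative moment bounds explicitly rather than relying on a grid cardinality, since the latter reintroduces an unwanted power of $n$.
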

 \WCr{We investigate the finite sample performance of  the long-run variance estimator $\hat \sigma^2$ in Section \ref{checkLRV}
of the supplementary material by means of a simulation study.} 
Proposition \ref{prop-lrv} and Theorem
\ref{sigma-asy} yield that $\mathfrak P_n^{\hat \sigma}=o_p(1)$ provided that $ \mathfrak{P}_n^{\sigma}=o(1)$.

\begin{theorem}\label{sigma-bootstrap}
Assume that the conditions of  Theorem
\ref{Thm2.1-2021}, Proposition \ref{prop-lrv} hold, that  $p=n^c$ for some $c>0$, and
that  $q^*$ in Theorem \ref{sigma-asy} satisfies 
$\Theta \big  (\big[(np)^{1/q^*}((nb_n)^{-1}+1/p)\big]^{\frac{q^*}{q^*+1}}, n p\big)=o(1)$.
Further assume
there exists  a sequence $\eta_n\rightarrow \infty$ such that 
$$
\Theta\big(\big(\sqrt{m_n\log np}{(g_n+\tau_n)}\eta_n(np)^{\frac{1}{q}}\big)^{q/(q+1)},np\big)+
\eta_n^{-q'}=o(1)~,
$$
where  $\gamma_n$, $g_n$ and $q'$ are defined   in Proposition \ref{prop-lrv}. 
Then the SCS \eqref{n1} defined by   Algorithm \ref{algorithm3} satisfies
\begin{align*}
\lim_{n\rightarrow \infty} \lim_{B\rightarrow \infty}  \p(m \in \mathcal C^{\hat \sigma}_n  ~ |~{\cal F}_n)=1-\alpha
\end{align*}
in probability.  
\end{theorem}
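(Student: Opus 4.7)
The plan is to start from the duality $m\in \mathcal C_n^{\hat\sigma}$ $\iff$
\[
\max_{\substack{u\in[b_n,1-b_n]\\ t\in[0,1]}}\frac{\sqrt{nb_n}\,|\hat m(u,t)-m(u,t)|}{\hat\sigma(u,t)}\ \le\ \frac{\sqrt 2\,T^{\hat\sigma}_{\lfloor(1-\alpha)B\rfloor}}{\sqrt{2\lceil nb_n\rceil-m_n'}}
\]
and to show that both sides, properly normalized, have asymptotically the same distribution. For the left-hand side, I would first replace $\hat\sigma$ by $\sigma$: on the event $\{\inf_{u,t}|\hat\sigma-\sigma|\le \eta_n^{-1}\inf_{u,t}\sigma\}$, which has probability $1-O(\eta_n^{-q'})$ by Proposition~\ref{prop-lrv} together with Markov's inequality and the assumption $\eta_n^{-q'}=o(1)$, the ratio $\hat\sigma/\sigma$ is bounded between $1\pm o(1)$ and the Proposition's rate $g_n+\tau_n$ can then be transferred into an additive error of order $\sqrt{nb_n}(g_n+\tau_n)\cdot \max|\hat\Delta(u,t)|/\sigma$, which is negligible by an anti-concentration argument for the Gaussian approximation of Theorem~\ref{sigma-asy} combined with the $\Theta$-condition in the hypothesis. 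After this replacement, Theorem~\ref{sigma-asy} reduces the left-hand side to $|(nb_n)^{-1/2}\sum_{i=1}^{2\lceil nb_n\rceil-1}\tilde Y_i^\sigma|_\infty$.

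For the right-hand side, conditionally on $\mathcal F_n$ the bootstrap statistic $T_k^{\hat\sigma,(r)}=\sum_{j}\hat{\vec\varepsilon}^{\ \hat\sigma}_{j:j+m_n',k}R^{(r)}_{k+j-1}$ is a centred Gaussian field in $k$, so $T^{\hat\sigma,(r)}$ is the maximum of a centred Gaussian vector whose conditional covariance I denote by $\hat\Sigma_n$. I would then identify the target covariance $\Sigma_n^\sigma=\mathrm{Cov}((nb_n)^{-1/2}\sum_{i}\tilde Y_i^\sigma)/C_n$ (with $C_n=(2\lceil nb_n\rceil-m_n')/2$ the normalization induced by the difference-of-block construction \eqref{det41}) and verify that $|\hat\Sigma_n-\Sigma_n^\sigma|_\infty=o_\p(1/\log(np))$. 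This splits into three pieces: (i) replacing $\hat\sigma$ by $\sigma$ in the blocks, controlled by Proposition~\ref{prop-lrv} and the $\sqrt{m_n\log np}(g_n+\tau_n)\eta_n(np)^{1/q}$-condition; (ii) replacing the residuals $\hat\varepsilon_{i,n}$ by the true errors $\varepsilon_{i,n}$, handled via the local-linear rate $(nd_n)^{-1/2}+d_n^2$ exactly as in the proof of Theorem~\ref{Thm2.1-2021}; (iii) showing that for the oracle blocks based on $(\vec\varepsilon,\sigma)$ the conditional covariance approximates $\Sigma_n^\sigma$, which follows from standard long-run variance estimation arguments under the geometric decay of $\delta_q(G,i)$ in Assumption~\ref{asserr}(2) and the window-size condition $\log^2 n/m_n+m_n\log n/(nb_n)=o(1)$ built into $\vartheta_n$.

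Given the covariance closeness, I would apply a Gaussian comparison inequality of Chernozhukov--Chetverikov--Kato type for high-dimensional maxima to conclude
\[
\sup_{x\in\mathbb R}\Bigl|\p\bigl(T^{\hat\sigma,(1)}/\sqrt{C_n}\le x\,\bigm|\,\mathcal F_n\bigr)-\p\bigl(|(nb_n)^{-1/2}\textstyle\sum_i\tilde Y_i^\sigma|_\infty\le x\bigr)\Bigr|\ \xrightarrow{\p}\ 0,
\]
and combine with the distributional equivalence from the first step to deduce that the scaled bootstrap quantile is a consistent estimator of the quantile of $|(nb_n)^{-1/2}\sum_i\tilde Y_i^\sigma|_\infty$. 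Anti-concentration of the Gaussian maximum (together with the condition on $\Theta(\cdot,np)$) then yields $\p(m\in\mathcal C_n^{\hat\sigma}\mid\mathcal F_n)\to 1-\alpha$ in probability, and the outer limit $B\to\infty$ replaces the Monte Carlo quantile by the conditional quantile via Glivenko--Cantelli.

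The main obstacle, in my view, is step (i) above: controlling the propagation of the $L^{q'}$ error $g_n+\tau_n$ of $\hat\sigma^2$ through the $(n-2\lceil nb_n\rceil+1)p$-dimensional bootstrap covariance matrix in the maximum norm. A naive bound loses a factor $np$ and is too crude; the correct approach is to exploit the block-differencing structure in \eqref{det41}, which kills the low-frequency bias in $\hat\sigma$ (this is exactly why the differenced local sums are used), and to combine this with a uniform truncation of $1/\hat\sigma$ via the event above to obtain the condition $\Theta\big((\sqrt{m_n\log np}(g_n+\tau_n)\eta_n(np)^{1/q})^{q/(q+1)},np\big)=o(1)$ appearing in the statement. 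The remaining pieces (residual substitution, Gaussian comparison, anti-concentration) follow the same pattern as in the proof of Theorem~\ref{Thm2.1-2021}.
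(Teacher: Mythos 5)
Your proposal is essentially the paper's own proof: a truncation event from Proposition~\ref{prop-lrv} with probability $1-O(\eta_n^{-q'})$, reduction of the left-hand side to the Gaussian via Theorem~\ref{sigma-asy}, comparison of the bootstrap covariance to the target by recycling the arguments from the proof of Theorem~\ref{Thm2.1-2021}, and finally Gaussian comparison plus anti-concentration. The paper organizes the bootstrap-side comparison slightly differently --- it introduces an intermediate statistic $T^{\sigma,\diamond}$ built with the same multipliers $R_j$ but with the true errors and true $\sigma$, compares $T^{\hat\sigma,\diamond}$ to $T^{\sigma,\diamond}$ by a conditional Gaussian-concentration bound on the maximum of their difference (exploiting that they are coupled through identical multipliers), and then compares $T^{\sigma,\diamond}$ to $\tilde Y^\sigma$ as in Theorem~\ref{Thm2.1-2021} --- whereas you propose one consolidated covariance comparison; this is the same arithmetic reshuffled, and both produce the same rates.

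One inaccuracy worth flagging: you attribute the block-differencing in \eqref{det41} to ``killing the low-frequency bias in $\hat\sigma$.'' It actually kills the bias in the \emph{residuals} $\hat\varepsilon_{i,n} = X_{i,n} - \hat m_{d_n}(\tfrac{i}{n},\cdot)$ coming from the local-linear mean fit --- the paper says so explicitly just below \eqref{det41}. The $\hat\sigma$ error enters multiplicatively, not as a slowly varying additive bias, and is handled entirely by the truncation event $A_n$ together with Proposition~\ref{prop-lrv}; this is why the hypothesis of the theorem carries the $(g_n+\tau_n)\eta_n$ term as a condition separate from the $(nd_n)^{-1/2}+d_n^2$ residual rate already built into Theorem~\ref{Thm2.1-2021}. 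Your proof plan still closes --- you identify the right truncation event, the right rate, and the right place to apply Markov --- but the stated rationale for the differencing is off.
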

\begin{remark}\label{Boundary-Remark}
\WCr{
Note that the SCS  derived so far 
exclude the  boundary region $[0,b_n)\cup(1-b_n, b_n]$ for the varable $u$.  This is common practice in  the context of simultaneous inference for kernel based estimates,
as inference at the boundary is very difficult due to the inaccurate estimation and sophisticated statistical properties of most nonparametric estimators at the boundary.
The problem  of simultaneous confidence bands  including the boundary region has even not been thoroughly investigated for one-dimensional responses \citep[see for example][]{zhou2010simultaneous,wu2017nonparametric}. 
In the following discussion we provide a first  solution to this problem. 
Since the bias of the usual local linear estimates at the boundary is of order $O(b_n^2)$ and therefore  too large for simultaneous inference,  we use  higher order one sided kernel for the boundary region. 
Simple calculations show that the bias of the  NW estimator with  this kernel can be of order  $O(\frac{1}{n}+b_n^3)$. We discuss the constant width SCS for the boundary, while the varying SCS could be constructed in a similar way. Let $\tilde K_l(v)=\sum_{i=1}^nK_l(\frac{i/n-v/n}{b_n})$ and   $\tilde K_r(v)=\sum_{i=1}^nK_r(\frac{i/n-v/n}{b_n})$.
Then,  similar to \eqref{3.11}, we define
\begin{align}
	\hat Z^l_{i}(u)= K_l \Big (\frac{\tfrac{i}{n}-u}{b_n} \Big ) \big  (\hat \varepsilon_{i,n}(\tfrac{1}{p}),	\varepsilon_{i,n}(\tfrac{2}{p}),\ldots ,\hat \varepsilon_{i,n} (\tfrac{p-1}{p}),
	\hat \varepsilon_{i,n}(1)
	\big )^\top
		\\	\hat Z^r_{i}(u)= K_r \Big (\frac{\tfrac{i}{n}-u}{b_n} \Big ) \big  (\hat \varepsilon_{i,n}(\tfrac{1}{p})
		,\hat \varepsilon_{i,n}(\tfrac{2}{p})
		,\ldots ,\hat \varepsilon_{i,n} (\tfrac{p-1}{p}),
		\hat \varepsilon_{i,n}(1)
		\big )^\top
	\end{align}	
	and consider  for $1\leq s\leq \lc nb_n\rc$ the $\lc nb_n\rc$ and $\lc nb_n\rc+1$ dimensional vectors $\hat {\tilde Z}_s^l$and 	$\hat {\tilde Z}_s^r$ as 
	\begin{align*}
		\hat {\tilde Z}_s^l=( \hat Z^{l\top}_{s}(\tfrac{1}{n})/\tilde K_l(1), \hat Z^{l\top}_{s+1}(\tfrac{2}{n})/\tilde K_l(2), \dots, Z^{l\top}_{s+\lc nb_n\rc-2}(\tfrac{\lc nb_n\rc-1}{n})/\tilde K_l(\lc nb_n\rc-1))^\top\\
			\hat {\tilde Z}_s^r=( \hat Z^{r\top}_{n'+s}(\tfrac{n'+\lc nb_n\rc}{n})/\tilde K_r(n'+\lc nb_n\rc), \hat Z^{r\top}_{n'+s+1}(\tfrac{n'+\lc nb_n\rc+1}{n})/\tilde K_r(n'+\lc nb_n\rc+1), \dots,\\ Z^{r\top}_{n'+s+\lc nb_n\rc-1}(1)/\tilde K_r(n))^\top
	\end{align*}
	where $n'=n-2\lc nb_n\rc+1$. For window size $m_n$, 
	let $m_n'=2\lfloor m_n/2\rfloor$, define
	the   vectors 
	\begin{align}
		\hat S^l_{jm_n'}= \frac{1}{\sqrt{m_n'}} \sum_{r=j}^{j+\lfloor m_n/2\rfloor-1 } 
		\hat{\tilde  Z}^l_{r} - \frac{1}{\sqrt{m_n'}}
		\sum_{r=j+ \lfloor m_n/2\rfloor}^{j+ m_n^\prime -1 }  \hat{\tilde  Z}^l_{r}.
	\end{align}
	Let $\hat{\vec \varepsilon}^{\ l}_{j:j+m_n',k}$ be
	the $p$-dimensional sub-vector of the vector $\hat S^l_{jm_n'}$  
	containing its $(k-1)p+1${st }  $-$  $kp${th}
	components. Then for $v=1, \ldots , B$, we 
	generate i.i.d. $N(0,1)$ random variables $\{R^{(v)}_i\}_{i=1, \ldots  ,2\lc nb_n\rc-m_n'}$.
	and calculate  for  $k= 1 , \ldots  , \lceil nb_n\rceil$ 
	\begin{align} 
		T_k^{l, (v)}& =\sum_{j=1}^{\lceil nb_n \rceil-m_n'} 
		\hat{\vec \varepsilon}^{\ l}_{j:j+m_n',k}
		R^{(v)}_{k+j-1} ~,~~
		T^{l,(r)}  =
		\max_{1\leq k\leq \lceil nb_n\rceil} |T_k^{(r)}|_\infty.
		\nonumber 
	\end{align} 
	Similarly, using $\hat{\tilde Z}_s^r$ and another sequence  of $i.i.d.$ $N(0,1)$ random variables $\{V^{(v)}_i\}_{i=1, \ldots  ,2\lc nb_n\rc-m_n'}$ we could generate	$T^{r,(v)}$ for $v=1,...B$, where $\{V^{(v)}_i\}$ and $\{R^{(v)}_i\}$ are independent. Define $T_{\lf (1-\alpha)B\rf}$ as the empirical $(1-\alpha)$-quantile of the bootstrap sample $T^{l,(1)}, \ldots , T^{l,(B)}$,  $T^{r,(1)}, \ldots , T^{r,(B)}$, then the lower and upper bound of the $(1-\alpha)$-SCS for $u\in [0,b_n)\cup(1-b_n, 1]$ are given by  
$
		\hat   L_{\text{boundary}} (u,t)   = \hat m_{b_n}(u,t) - \hat r_{\text{boundary}}
  $ and $
		\hat  U_{\text{boundary}} (u,t) =  \hat m_{b_n}(u,t) + \hat r_{\text{boundary}}$, respectively
	where 
	$
	\hat r_{\text{boundary}}  =\frac{\sqrt{nb_n} T_{\lf (1-\alpha)B\rf} }{\sqrt{ \lceil nb_n\rceil-m'_n}.
	} 
	$ and $u \in [0,b_n) \cup (1-b_n,1]$, $t \in [0,1]$.
	We examine the empirical coverage probabilities of this SCS in Section \ref{Boundary-check} of the online supplement.
}
\end{remark}

\def\theequation{3.\arabic{equation}}
\setcounter{equation}{0}

\section{Real data}\label{sec3}

In this section we illustrate the proposed methodology analyzing the implied volatility (IV) of the European call option of SP500. These options are contracts such that their holders have the right to buy the SP500 at a specified price (strike price) on a specified date (expiration date).
The implied volatility is derived from the observed SP500 option prices, directly observed parameters, such as risk-free
rate and expiration date,  and option pricing methods, and is widely used in the studies of quantitative finance. For more details, we refer to \cite{hull2003options}. 

We collect the implied volatility and the strike price from the `optionmetrics' database  and the SP500 index from the CRSP database.  Both databases can be accessed from Wharton Research Data Service (WRDS). We calculate the SCSs  for the implied volatility surface, which is a two variate function of time (more precisely time to maturity) and moneyness, where the moneyness is calculated using strike price divided by SP500 indices.  The options are collected from December 21, 2016 to July 19, 2019, and the expiration date is December 20, 2019. Therefore the length of time series is 647. Within each day we observe the volatility curve, which is IV as a function of moneyness.

\begin{figure}[h]
\centering
\includegraphics[scale=0.3]{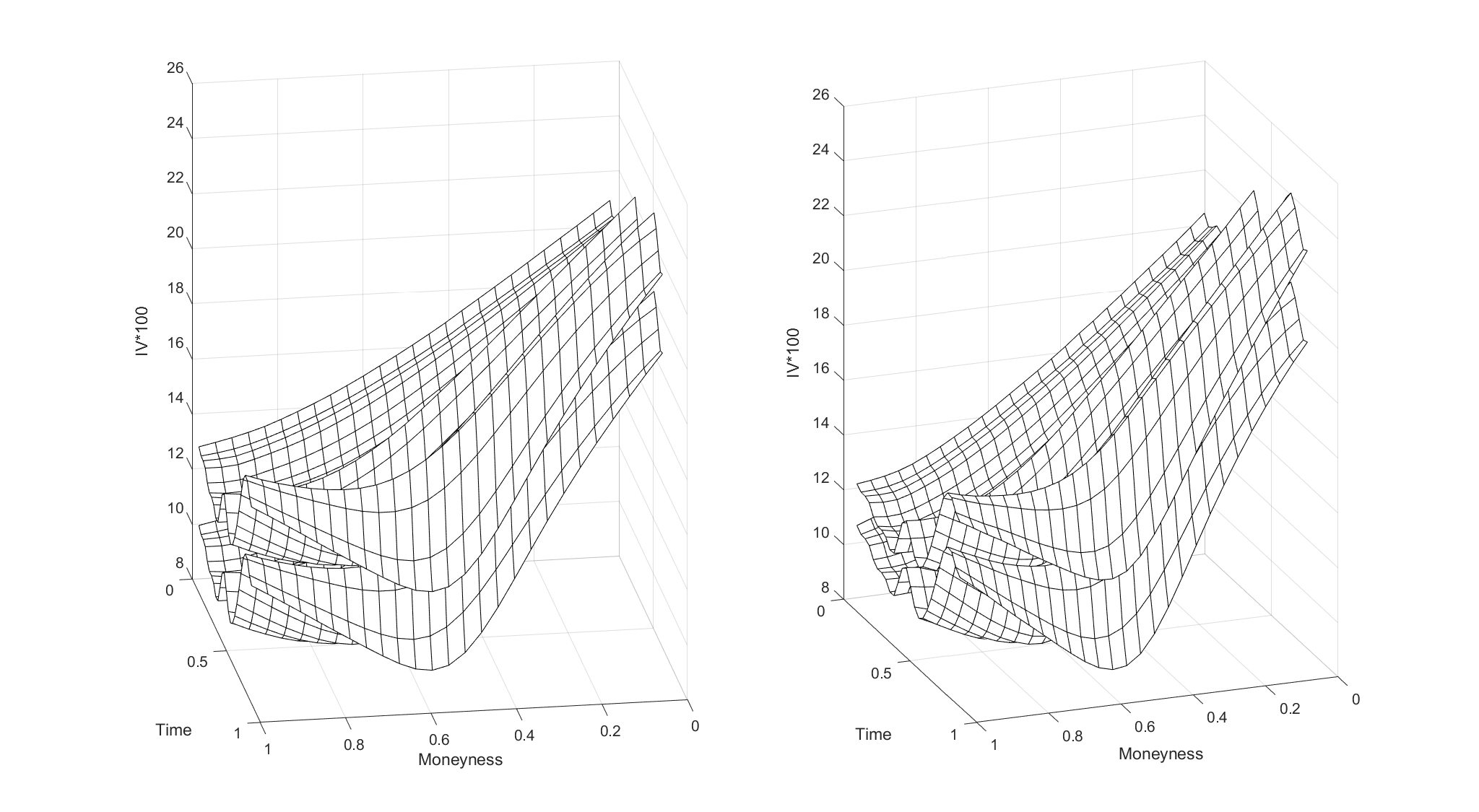}
\vspace{-0.0cm}
\footnotesize{\caption{\it 95\% SCS of the form   \eqref{2.3} for the  IV surface.
Left panel: constant width
(Algorithm   \ref{algorithm1}); Right panel: variable width (Algorithm    \ref{algorithm3}).}}
\label{Vol-SCB}
\end{figure}

Recently,  \cite{liu2016convolutional} models IV via functional time series. 
Following their perspective, we shall study the IV data via  model \eqref{1.1}, where $X_{i,n}(t)$  represents the observed volatility curve at a day $i$, with total sample size $n=647.$ 
We consider the options with moneyness in the range of $[0.8, 1.4]$,
corresponding to options that have been actively traded in this period
(note that, our methodology was developed for functions on the interval $[0,1]$, but it is straightforward to extend this to an arbitrary 
compact interval $[a,b]$).
The number of observations for each day varies from 34 to 56,   and we smooth the implied volatility using linear interpolation and constant extrapolation.

In practice it is important to determine whether the volatility curve changes with time, i.e., to test $H_0: m(u,t)\equiv m(t).$ As pointed out by \cite{daglish2007volatility}, the volatility surface of an asset would be flat and unchanging if the assumptions of Black–Scholes \citep{black2019pricing} hold.  In particular, \cite{daglish2007volatility} demonstrate that  for most assets the volatility surfaces are not flat and are  stochastically changing in practice.
We can provide an inference tool for such a conclusion 
using  the SCSs developed in Section \ref{sec3}.
For example, note, that by the duality between confidence regions
and hypotheses tests, an asymptotic level $\alpha$ 
test for the hypothesis $H_0: m(u,t)\equiv m(t)$
is obtained by rejecting the null hypothesis, whenever the 
surface of the form 
$m(u,t) = m(t) $ is not 
contained in an $(1-\alpha)$ SCS of the form \eqref{2.3}.

Therefore we construct the 95\% SCS
for  the regression function $m$ 
with constant and varying width using Algorithm \ref{algorithm1} and  Algorithm \ref{algorithm3}, respectively. 
Following Section \ref{sec51} of supplemental material 
we choose $b_n=0.1$ and $m_n=36$.  The results are depicted in Figure \ref{Vol-SCB} (for a better illustration the  $z$-axis  shows $100\times$ implied volatility). We observe
from both figures that the SCSs do  not contain a surface of the form $m(u,t)=m(t)$ and therefore  reject the 
null hypothesis  (at significance level $5\%$). In the supplement, we construct the simultaneous confidence bands of IV w.r.t. fixed $t$ and fixed $u$. 
 
\section*{Supplementary Materials} contains further 
results for \WCr{noisy and multivariate locally stationary functional time series}, the confidence bands for the functions $t \to m(u,t) $ (fixed $u$) and $u \to m(u,t) $ (fixed $t$), implementation details, simulation and additional data analysis results, examples of locally stationary functional time series. It also includes all the detailed proofs.  

\bigskip

\noindent
\section*{Acknowledgements} Holger Dette gratefully acknowledges Collaborative Research Center “Statistical modeling
of nonlinear dynamic processes” (SFB 823, Project A1, C1)
of the German Research Foundation (DFG). Weichi Wu gratefully acknowledges
NSFC no.12271287. Weichi Wu is the corresponding author.
\bigskip

\bibliographystyle{apalike}
\begin{footnotesize}
\setlength{\bibsep}{1pt}
\bibliography{references}
\end{footnotesize}

\def\theequation{S\arabic{section}.\arabic{equation}}
\def\thesection{S\arabic{section}}
\def\theproposition{S\arabic{proposition}}
\def\thetheorem{S\arabic{theorem}}
\def\thelemma{S\arabic{lemma}}
\def\thecorrolary{S\arabic{corollary}}

\def\theremark{S\arabic{remark}}

\def\theexample{S\arabic{example}}
\def\thedefinition{S\arabic{definition}}

\def\thetable{S\arabic{table}}
\def\thefigure{S\arabic{figure}}
\renewcommand{\thealgocf}{S\arabic{algocf}}
\renewcommand{\baselinestretch}{2}

\markright{ \hbox{\footnotesize\rm Statistica Sinica: Supplement
	}\hfill\\[-13pt]
	\hbox{\footnotesize\rm
	}\hfill }

\markboth{\hfill{\footnotesize\rm Holger Dette \& Weichi Wu} \hfill}
{\hfill {\footnotesize\rm FILL IN A SHORT RUNNING TITLE} \hfill}

\renewcommand{\thefootnote}{}
$\ $\par \fontsize{12}{14pt plus.8pt minus .6pt}\selectfont


\centerline{\large\bf Supplemental Material For}
\vspace{2pt}
\centerline{\large\bf  "Confidence surfaces for the mean  of locally stationary}
\vspace{2pt}
\centerline{\large\bf functional time series"}
\vspace{.25cm}
\author{Holger Dette, Weichi Wu}
\vspace{.4cm}
\centerline{\it Ruhr-Universit\"at Bochum and Tsinghua University}
\vspace{.55cm}
\centerline{\bf Supplementary Material}
\vspace{.55cm}
\fontsize{9}{11.5pt plus.8pt minus .6pt}\selectfont
\noindent
Section \ref{Sec-rmks} provides additional remarks regarding the noisy and multivariate locally stationary functional time series. Section \ref{finitesample} provides approaches to the selection of tuning parameters and simulation results, including the simulation results for both boundary and interior regions, and the simulation results checking the Gaussian approximation and the long-run variance function estimator. Section  \ref{remrk3.1} discusses possible alternative assumptions for Theorem \ref{Thm2.1-2021}.	Section \ref{secA}  contains some details about simultaneous 
confidence bands for the regression function 
in model \eqref{1.1}, where one of the arguments
is fixed (Section \ref{seca1}) including additional numerical results for this case (see Sections \ref{sec81} and \ref{Section-Real Data}). 
In Section \ref{secex}  we provide  examples of locally stationary functional processes, illustrating our approach of modeling non-stationary 
functional data. Section \ref{proof-main} contains the proof of  all Theorems, 
while Section \ref{Proof-Prop} provides propositions. 
Finally, Section \ref{aux} presents  auxiliary results for the proofs.
\par

\setcounter{section}{0}
\setcounter{equation}{0}

\fontsize{12}{14pt plus.8pt minus .6pt}\selectfont
\section{Additional remarks}\label{Sec-rmks}
In this section, we provide two remarks which briefly discuss  how to build simultaneous confidence surface for noisy data and multivariate locally stationary functional time series using our method.
\begin{remark}\label{noisydata}
	Indeed several authors consider (stationary) functional data models 
	with  noisy observation \citep[see][among others]{caoyangtodem2012,chensong2015}
	and we expect  that the results presented in this section can be extended to this scenario. More precisely, consider the model
	$$
	Y_{ij}=X_{i,n}({
		\tfrac{j}{N}})+\sigma({\tfrac{j}{N}})z_{ij},~1\leq i\leq n, 1\leq j\leq N~,
	$$
	where $X_{i,n}$ is  the functional time series defined in \eqref{1.1},  $\{z_{ij}\}_{i=1, \ldots , n, j=1,\dots, N} $  is an array of centered independent identically distributed observations and $\sigma(\cdot)$ is a positive function on the interval $[0,1]$. {This means that one
		can not observe the full trajectory of $\{ X_{i,n} (t) ~| ~t \in [0,1] \}$, but only the function  $X_{i,n}$ evaluated at the discrete time points $1/N, 2/N, \ldots ,(N-1)/N,1$ subject to some random error.
		If $N\rightarrow \infty$ as $n\rightarrow \infty$, and the regression function $m$ in \eqref{1.1}  is sufficiently smooth, we expect that  we can construct 
		simultaneous confidence bands and surfaces by applying the
		procedure described in this section
		to smoothed trajectories. 

		For example, we 
		can consider the  smooth estimate 
		\begin{align}
			\tilde m(u,\cdot )=\argmin_{g\in {\cal S}_p }\sum_{i=\lf nu-\sqrt n\rf}^{\lceil nu+\sqrt n\rceil}\sum_{j=1}^N\big( {
				Y_{i,j}}
			-g(\tfrac{j}{N})\big)^2~,
		\end{align}
		where  ${\cal S}_p$ denotes the set of splines of order $p$, which depends on the  smoothness of the function $t\to m(u,t)$.
		We can now construct 
		confidence bands applying the
		methodology  to the 
		data $\tilde X_{i,n} (\cdot ) =\tilde m(\tfrac{i}{\sqrt n},\cdot )$, $i=1, \ldots , \sqrt n$
		due to the asymptotic efficiency of the spline estimate
		\citep[see Proposition 3.2-3.4 in ][]{caoyangtodem2012}. }

	Alternatively,   we can also
	obtain smooth
	estimates 
	$t \to \check X_{i,n} (t)$
	of the  trajectory
	using local polynomials, 
	and we expect that the proposed methodology 
	applied to the data $\check  X_{1,n} , \ldots , \check X_{n,n}$ will yield valid 
	simultaneous confidence bands 
	and surfaces,
	where the range for the variable $t$ 
	is restricted to the interval 
	$ [c_n,1-c_n]$ and $c_n$ denotes the bandwidth of the local polynomial estimator used in smooth estimator of the trajectory.
\end{remark}
\begin{remark}\label{remark4.1-multiple}
	{\rm 
		The methodology presented so far can be extended to  construct a simultaneous confidence surfaces  for the vector of mean functions of a multivariate locally stationary functional time series. For simplicity we  consider  a  $2$-dimensional series of the form 
		\begin{align} \label{det42}
			\left (\begin{matrix}
				X^1_{i,n}(t) \\ X^2_{i,n}(t)  
			\end{matrix} \right)
			=	
			\left (\begin{matrix}
				m_1(\tfrac{i}{n},t) \\ m_2(\tfrac{i}{n},t)  
			\end{matrix}
			\right) ~+~
			\left (\begin{matrix}
				\varepsilon^1_{i,n}(t) \\ \varepsilon^2_{i,n}(t)
			\end{matrix}
			\right)~,
		\end{align}
		and define for  $a=1,2$ 
		\begin{align*}
			\hat Z^{a,\hat \sigma}_{i}(u)
			& = (\hat Z^{a,\hat \sigma}_{i,1}(u), \ldots , \hat Z^{a,\hat \sigma}_{i,p}(u))^\top \\
			\nonumber 
			& = K \Big (\frac{\tfrac{i}{n}-u}{b_n} \Big ) \Big  (\tfrac{\hat \varepsilon^a_{i,n}(\tfrac{1}{p})}{\hat \sigma_a(\tfrac{i}{n},\tfrac{1}{p})}
			,\tfrac{\hat \varepsilon^a_{i,n}(\tfrac{2}{p})}{\hat \sigma_a(\tfrac{i}{n},\tfrac{2}{p})}
			,\ldots ,\tfrac{\hat \varepsilon^a_{i,n} (\tfrac{p-1}{p})}{\hat \sigma_a(\tfrac{i}{n},\tfrac{p-1}{p})},
			\tfrac{\hat \varepsilon^a_{i,n}(1)}{\hat \sigma_a(\tfrac{i}{n},1)}
			\Big )^\top, 
		\end{align*}
		where $\hat \varepsilon^a_{i,n}(t)=X^a_{i,n}(t)-\hat m_{a}(\frac{i}{n},t)$ and 
		$\hat \sigma^2_a(\frac{i}{n},t)$ is the estimator of long-variance of $\varepsilon^a_{i,n}$ defined in \eqref{2018-5.4}. Next we 
		consider  the $2(n-2\lceil nb_n\rceil+1 )p$-dimensional  vector 
		\begin{align*}
			\hat{\tilde Z}_j^{\hat \sigma}= \big (
			\hat Z_{j,\lceil nb_n\rceil}^{\hat \sigma, \top},
			\hat	Z_{j+1,\lceil nb_n\rceil+1}^{\hat \sigma, \top}
			\ldots ,
			\hat	Z_{n-2\lceil nb_n\rceil+j, n-\lceil nb_n \rceil  }^{\hat \sigma, \top}
			\big)^\top ~,
		\end{align*}
		where 
		$
		\hat Z^{\hat \sigma}_{i,l}= 	\hat Z^{\hat \sigma}_i(\tfrac{l}{n}) = (	\hat Z^{1,\hat \sigma}_{i,l,1},\hat Z^{2,\hat \sigma}_{i,l,1} \ldots 	\hat Z^{1,\hat \sigma}_{i,l,p},\hat Z^{2,\hat \sigma}_{i,l,p})^\top~
		$
		contains information from both components.
		Define for $a=1,2$
		\begin{align*}
			\hat L_{3,a}^{\hat\sigma}(u,t)=\hat m_{a}(u,t)-\hat r_{3,a} (u,t), ~~  \hat U_{3,a}^{\hat\sigma}(u,t)=\hat m_{a}(u,t)+\hat r_{3,a}(u,t)  ~,
		\end{align*}
		where 
		$$
		\hat r_{3,a} (u,t) =  \frac{\hat \sigma_a(u,t)\sqrt 2 T^{\hat \sigma } _{\lf (1-\alpha)B\rf} }{\sqrt{nb_n}\sqrt{ 2\lceil nb_n\rceil-m'_n}}
		$$
		and $T^{\hat \sigma } _{\lf (1-\alpha)B\rf}$ is generated in the same way as in step (d) of Algorithm \ref{algorithm3} with $p$ replaced by $2p$, $\hat m_a $ is the kernel estimator of $m_a $  defined in \eqref{estimating-m}. 
		Further, define for $a=1,2$ the set of functions
		\begin{align*} 
			{\cal C}^{\hat \sigma}_{a,n} =     \big \{ f\in \mathcal C^{3,0} : [0,1]^2 \to \R ~| ~~ \hat L_{3,a} (u,t) \leq f(u,t)  \leq  \hat U_{3,a} (u,t) \notag\\~~\forall   u \in  [b_n,1-b_n] ~\forall   t \in  [0,1] \big \} .
		\end{align*}
		Suppose that the mean   functions and error processes of $X^1_{i,n}(t)$ and $X^2_{i,n}(t)$  satisfy the conditions of Theorem \ref{sigma-bootstrap}, then it can be proved that 
		the set $\mathcal C^{\hat \sigma}_{1,n} \times \mathcal C^{\hat \sigma}_{2,n}$
		defines an asymptotic $(1-\alpha)$
		simultaneous confidence surface for the vector  function $(m_1,m_2)^\top $. The details are omitted for the sake of brevity.
	}
\end{remark}

\section{Finite Sample Performance}\label{finitesample}

In this section we study the finite sample performance of the simultaneous confidence  surfaces proposed in the previous sections. 
We start giving some more details regarding the general implementation of the algorithms, and present the simulation study.

\subsection{Implementation} \label{sec51}

For the estimator  of the regression function in \eqref{estimating-m} we use  the kernel 
(of order $4$) in $[b_n, 1-b_n]$
\begin{align*}
	K(x)=(45/32-150x^2/32+105x^4/32)\mathbf 1(|x|\leq 1)~,
\end{align*}
and for the boundary we use the kernel function $K_l(x)=(420x^2-480x+120)x(1-x)\mathbf 1(0\leq x\leq 1)$.
	\textcolor{blue}{We choose the bandwidth as the minimizer of
		\begin{align}
			\label{det2} 
			MGCV(b)=\max_{1\leq s\leq p}\frac{\sum_{i=1 }^{n}(\hat m_b(\tfrac{i}{n},\tfrac{s}{p})-X_{i,n}(\tfrac{s}{p}))^2}{(1-{\rm tr} (Q_s(b))/n)^2}~,
		\end{align}
		$Q_s(b)$ is an $n \times n $ 
		matrix such that
		$$
		\big 
		(\hat m_b(\tfrac{1}{n},\tfrac{s}{p}), 
		\hat m_b(\tfrac{2}{n},\tfrac{s}{p}),
		\ldots ,\hat m_b(1,\tfrac{s}{p})\big )^\top =Q_s(b)\big ( X_{1,n}(\tfrac{s}{p}), \ldots ,X_{n,n}(\tfrac{s}{p})\big )^\top.
		$$
		Here  $\hat m_b(u,t)$ is the NW estimator 
		with bandwidth $b$ 
		defined in \eqref{estimating-m}.}

The criterion \eqref{det2} is motivated by the generalized cross validation criterion introduced by \cite{craven1978smoothing} and will be called
Maximal Generalized Cross Validation (MGCV) method throughout this paper.

For the estimator  of the  long-run variance 
in \eqref{2018-5.4} we use  $w=\lf n^{2/7}\rf$ and $\tau_n=n^{-1/7}$ as recommended in \cite{dette2019detecting}.
The window size in the multiplier bootstrap is then selected by the  minimal volatility method advocated by \cite{politis1999subsampling}.
For the sake of brevity, we discuss this method  only for Algorithm \ref{algorithm3} in detail
(the method  for Algorithm  \ref{algorithm1} is similar). 
We consider 
a grid of window sizes $\tilde m_1< \ldots < \tilde m_M$ (for some integer $M$).
We first calculate   $\hat S_{j \tilde{m}_s}^{\hat \sigma}=(\hat S_{j \tilde{m}_s,r}^{\hat \sigma}, 1\leq r\leq(n-2\lceil nb_n\rceil+1)p)$ defined in step (c) of Algorithm \ref{algorithm3} for each   $\tilde{m}_s$. 
Let $\hat S_{\tilde m_s}^{\hat \sigma,\diamond}$  denote the $(n-2\lceil nb_n \rceil+1)p$ dimensional vector with $r_{th}$ entry defined by   
$$
\hat S_{\tilde m_s,r}^{\hat \sigma,\diamond} =
\frac{1} { 2\lceil nb_n\rceil-\tilde m_s } 
\sum_{j=1}^{2\lceil nb_n \rceil-\tilde m_s}(\hat S_{j\tilde m_s,r}^{\hat \sigma})^2~,
$$
and consider the standard error of $\{\hat S_{\tilde m_s,r}^{\hat \sigma,\diamond}\}_{s=k-2}^{k+2}$, 
that is 
\begin{align*} 
	{\rm se} \big(\{\hat S_{\tilde m_s,r}^{\hat \sigma,\diamond}\}_{s=k-2}^{k+2}\big)=  \Big (\frac{1}{4}\sum_{s=k-2}^{k+2} \Big (\hat S_{\tilde m_s,r}^{\hat \sigma,\diamond}-\frac{1}{5}\sum_{s=k-2}^{k+2}\hat S_{\tilde m_s,r}^{\hat \sigma,\diamond}\Big )^2 \Big )^{1/2}.
\end{align*}
Then we choose $m_n'=\tilde m_j$ where $j$ is defined as the minimizer  of 
the function 
\begin{align*}
	MV(k)= 
	\frac{1}{(n-2\lceil nb_n\rceil+1)p} \sum_{r=1}^{(n-2\lceil nb_n\rceil+1)p}
	se\big(\{\hat S_{\tilde m_s,r}^{\hat \sigma,\diamond}\}_{s=k-2}^{k+2}\big)~
\end{align*}
in the set $\{ 3, \ldots ,  M-2\}$. 
Throughout this section   we consider $p=\lf \sqrt n\rf$.

\subsection{Simulated  data} \label{sec52}

We  consider two regression functions 
\begin{align*}
	\textcolor{blue}{m_1(u,t)}&\textcolor{blue}{=(u+2t)^2/2},  \\ 
	m_2(u,t)& =(1+u^2)(6(t-0.5)^2(1+\mathbf 1(t>0.3))+1)
\end{align*}
(note that  $m_2$ is discontinuous at the point $t=0.3$). For the definition of the error
processes let $\{\varepsilon_i\}_{i\in \mathbb Z}$ be 
a sequence of independent standard normally distributed random variables and  $\{\eta_i\}_{i\in \mathbb Z}$ be 
a sequence of independent $t$-distributed random variables with $8$ degrees of freedom. Define the functions
\begin{align*}
	&  ~a(t)  =0.5 \cos(\pi t/3) , ~~~~~~~~~~~~
	b(t) =0.4t , 
	~~~~~~~~~~~~  c(t) =0.3t^2 ,  \\  
	&  d_1(t) =1+0.5 \sin(\pi t) , 
	~~~~~~ ~ d_{2,1}(t)  =2t-1 , 
	~~~~~~   d_{2,2}(t) =6t^2-6t+1 , 
\end{align*}
and  
$\FF_i^1=( \ldots  , \varepsilon_{i-1  },  \varepsilon_i)$,
$\FF_i^2=( \ldots  , \eta_{i-1},\eta_i)$.
We consider the following two locally stationary functional time series errors $G_1$ and $G_2$ are defined by  
\begin{align*} 
	\textcolor{blue}{G_1(u,t,\FF_i^1)=G_0(u,t,\FF_i^1)d_1(t)/3, \text{where}~ G_0(u,t,\FF_i^1)=(a(u)-0.1t)G_0(u,t,\FF_i^1)+\epsilon_i,}\\
	G_2(u,t,\FF_i^1,\FF_i^2)=	\tilde G_1(u,\FF_i^1)d_{2,1}(t)/2+\tilde G_2(u,\FF_i^2)d_{2,2}(t)/2
\end{align*}
where the locally stationary time series $\tilde G_1 $ and $\tilde G_2$ are defined as
\begin{align*}
	\tilde	G_1(u,\FF_i^1)=a(u)\tilde G_1(u,\FF ^1_{i-1})+\varepsilon_i,~
	\tilde	G_2(u,\FF_i^2)=b(u)\tilde G_2(u,\FF^2_{i-1})+\eta_i-c(u)\eta_{i-1}.
\end{align*}
Note that 
$\tilde G_1$ is a  locally stationary AR($1$) process (or equivalently a
locally stationary MA($\infty$)  process), and that $\tilde G_2$ is a locally stationary ARMA($1,1$)  model. 
With these processes we define  the following   functional time series model (for $1\leq i\leq n$, $0\leq t\leq 1$)
\begin{align*}
	\text{(a)} ~~ X_{i,n} (t)=m_1(\tfrac{i}{n},t)+G_1(\tfrac{i}{n},t,\FF_i^1)
	~~	\text{(b)} X_{i,n} (t)=m_1(\tfrac{i}{n},t)+G_2(\tfrac{i}{n},t,\FF_i^1,\FF_i^2)
	\\
	\text{(c)} ~~ X_{i,n} (t)=m_2(\tfrac{i}{n},t)+G_1(\tfrac{i}{n},t,\FF_i^1)
	~~	\text{(d)}  X_{i,n} (t)=m_2(\tfrac{i}{n},t)+G_2(\tfrac{i}{n},t,\FF_i^1,\FF_i^2).
\end{align*}
In Figure \ref{SCB-simulation} we display typical 
$95\%$ simultaneous confidence surfaces of the form \eqref{2.3} from one simulation run
for  model (a) with sample size  $n=800$
and $B=1000$ bootstrap replications, which are calculated  by Algorithm \ref{algorithm1} (constant width) and  Algorithm \ref{algorithm3} (varying width). 
We observe that  there exist  differences between the  surfaces with constant and variable width, but they are not substantial.

\begin{figure}[h]
\centering
\includegraphics[width=\textwidth,height=6.5cm]
{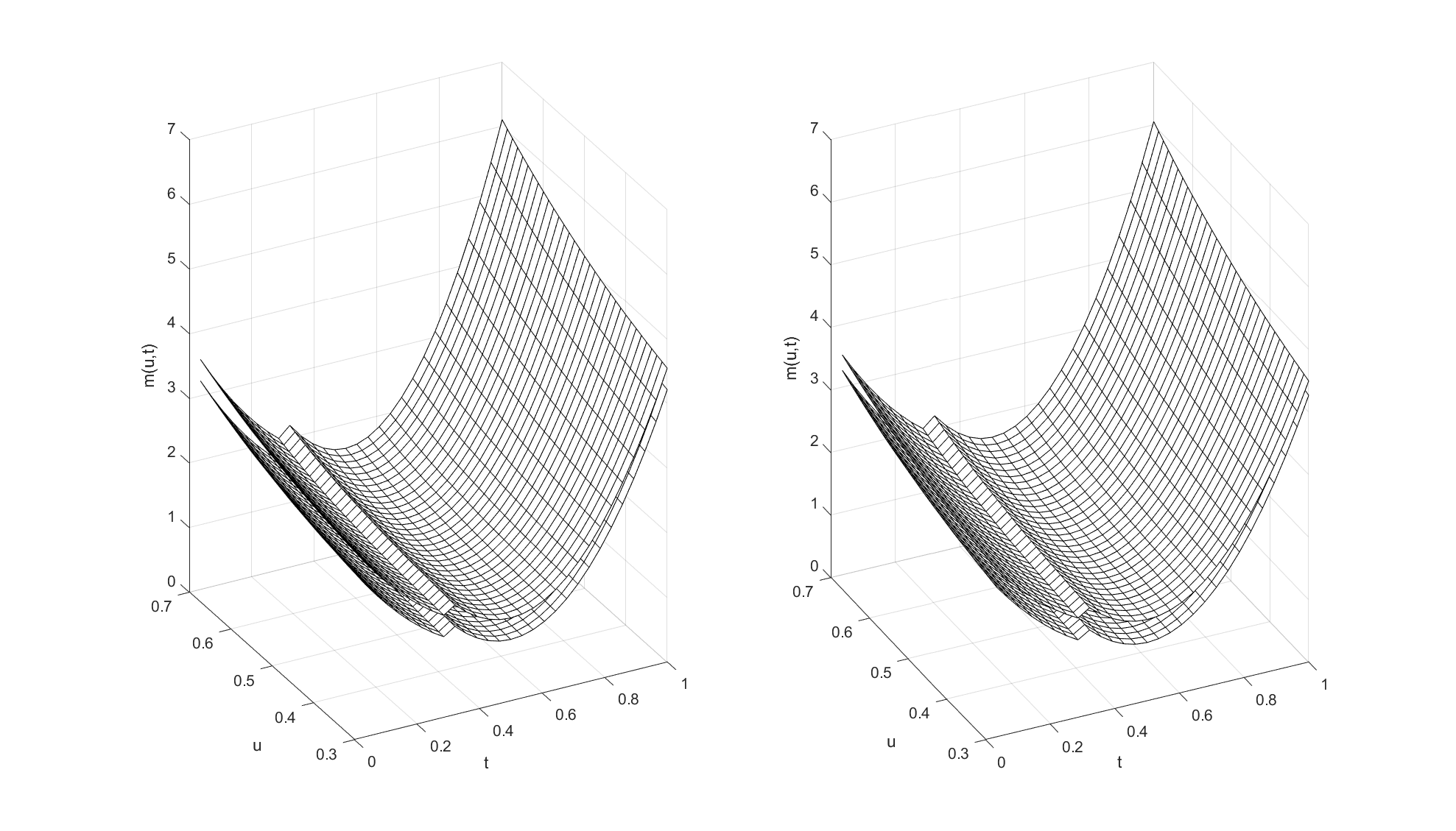}
\vspace{-0.5cm}
\caption{ \it 
	$95\%$ simultaneous confidence surfaces   \eqref{n02}  and 		\eqref{n1} 	
	for the regression function in model (c) from $n=800$
	observations. Left panel: constant width (Algorithm   \ref{algorithm1}); Right panel: varying width (Algorithm   \ref{algorithm3})
}
\label{SCB-simulation}
\end{figure}

We next investigate the  coverage probabilities 
of the different surfaces constructed in this paper for sample sizes $n=500$ and $n=800$.
All results are based on $1000$ simulation runs and $B=1000$ bootstrap replications.
The left part of  Table \ref{table1} shows the coverage probabilities of the surfaces with constant width	while the results in the right part correspond to the bands with varying width. 
We observe that the simulated coverage probabilities are  close to their nominal levels in all cases under consideration, which illustrates the validity of our methods for finite sample sizes.

We conclude this section mentioning 
that confidence bands  for the regression function $m$ for a fixed $u$ or a fixed $t$ can be constructed in a similar manner and
details and some additional numerical results for these bands  are  discussed in Section \ref{secA}.

\begin{table}[H]
\caption{\label{table1}\it Simulated coverage probabilities of the simultaneous confidence bands
	\eqref{n02}  and 
	\eqref{n1} 
	calculated by  Algorithm   \ref{algorithm1}
	(constant width) and  Algorithm   \ref{algorithm3} (varying width), respectively. 
}
\centering
\begin{tabular}{lcccccccc}
	\toprule
	& \multicolumn{4}{c}{constant width} & \multicolumn{4}{c}{varying width} \\
	\cmidrule(lr){2-5} \cmidrule(lr){6-9}
	& \multicolumn{2}{c}{model (a)} & \multicolumn{2}{c}{model (b)} & \multicolumn{2}{c}{model (a)} & \multicolumn{2}{c}{model (b)} \\
	\cmidrule(lr){2-3} \cmidrule(lr){4-5} \cmidrule(lr){6-7} \cmidrule(lr){8-9}
	level   & 90\% & 95\% & 90\% &  95\% & 90\% & 95\% & 90\% & 95\% \\
	\cmidrule(lr){1-9}
	n=500 & 88.0\% & 94.2\% & 90.1\% & 93.8\% & 91.2\% & 95.3\% & 87.9\% & 93.6\% \\
	n=800 & 89.9\% & 95.8\% & 88.3\% & 93.9\% & 90.9\% & 96.1\% & 90.7\% & 96.0\% \\
	\midrule
	& \multicolumn{2}{c}{model (c)} & \multicolumn{2}{c}{model (d)} & \multicolumn{2}{c}{model (c)} & \multicolumn{2}{c}{model (d)} \\
	\cmidrule(lr){2-3} \cmidrule(lr){4-5} \cmidrule(lr){6-7} \cmidrule(lr){8-9}
	level   & 90\% & 95\% & 90\% &  95\% & 90\% & 95\% & 90\% & 95\% \\
	\cmidrule(lr){1-9}
	n=500 & 87.9\% & 93.9\% & 91.3\% & 95.4\% & 87.5\% & 95.1\% & 87.7\% & 94.8\% \\
	n=800 & 88.6\% & 94.2\% & 89.9\% & 95.9\% & 90.8\% & 95.0\% & 90.1\% & 94.9\% \\
	\bottomrule
\end{tabular}
\end{table}

\subsection{Simulation results in the boundary}\label{Boundary-check}

\WC{	We examine the proposed method for the simultaneous inference in the boundary region in Remark  \ref{Boundary-Remark}. We summarize our results in table \ref{Boundary-Table}, and find that our method in boundary works reasonably well.
}
\begin{table}[h!]
\centering
\caption{Simulated coverage probabilities of simultaneous confidence surface in the boundary using methods in Remark \ref{Boundary-Remark}. }\label{Boundary-Table}
\begin{tabular}{lcccccccc}
	\toprule
	& \multicolumn{4}{c}{constant width} & \multicolumn{4}{c}{varying width} \\
	\cmidrule(lr){2-5} \cmidrule(lr){6-9}
	& \multicolumn{2}{c}{model (a)} & \multicolumn{2}{c}{model (b)} & \multicolumn{2}{c}{model (a)} & \multicolumn{2}{c}{model (b)} \\
	\cmidrule(lr){2-3} \cmidrule(lr){4-5} \cmidrule(lr){6-7} \cmidrule(lr){8-9}
	level   & 90\% & 95\% & 90\% &  95\% & 90\% & 95\% & 90\% & 95\% \\
	\cmidrule(lr){1-9}
	n=500 & 87.6\% & 93.9\% & 87.2\% & 93.7\% & 91.9\% & 96.0\% & 91.2\% & 96.2\% \\
	n=800 & 89.4\% & 94.4\% & 90.6\% & 96.1\% & 90.2\% & 95.4\% & 89.7\% & 95.0\% \\
	\midrule
	& \multicolumn{2}{c}{model (c)} & \multicolumn{2}{c}{model (d)} & \multicolumn{2}{c}{model (c)} & \multicolumn{2}{c}{model (d)} \\
	\cmidrule(lr){2-3} \cmidrule(lr){4-5} \cmidrule(lr){6-7} \cmidrule(lr){8-9}
	level   & 90\% & 95\% & 90\% &  95\% & 90\% & 95\% & 90\% & 95\% \\
	\cmidrule(lr){1-9}
	n=500 & 91.4\% & 95.1\% & 89.8\% & 94.6\% & 90.4\% & 95.5\% & 90.8\% & 95.3\% \\
	n=800 & 89.8\% & 95.7\% & 90.1\% & 94.9\% & 90.7\% & 95.4\% & 88.4\% & 94.1\% \\
	\bottomrule
\end{tabular}
\end{table}

\subsection{Empirical investigation of Theorem \ref{SCB}}\label{CheckThm1}
\textcolor{blue}{In this section we investigate the finite sample accuracy of the  Gaussian approximation  in Theorem \ref{SCB}.  We consider  model (d), the sample size  $n=500, 800$ and $b=0.1, 0.2$
and compare the simulated quantiles of  the maximum deviation of $\max_{\substack{b_n \leq u\leq 1-b_n \\ 0\leq t\leq 1}}\sqrt{nb_n}|\hat \Delta(u,t)| $ and that of the maximum norm of the sum of corresponding high-dimensional Gaussian vectors with the auto-covariance structure described in Theorem \ref{SCB}. The results are presented in
Figure \ref{Check-Theorem1}, which shows that the approximation accuracy of Theorem \ref{SCB} is quite high.} 
\begin{figure}
\centering
\includegraphics[width=14cm, height=14cm]{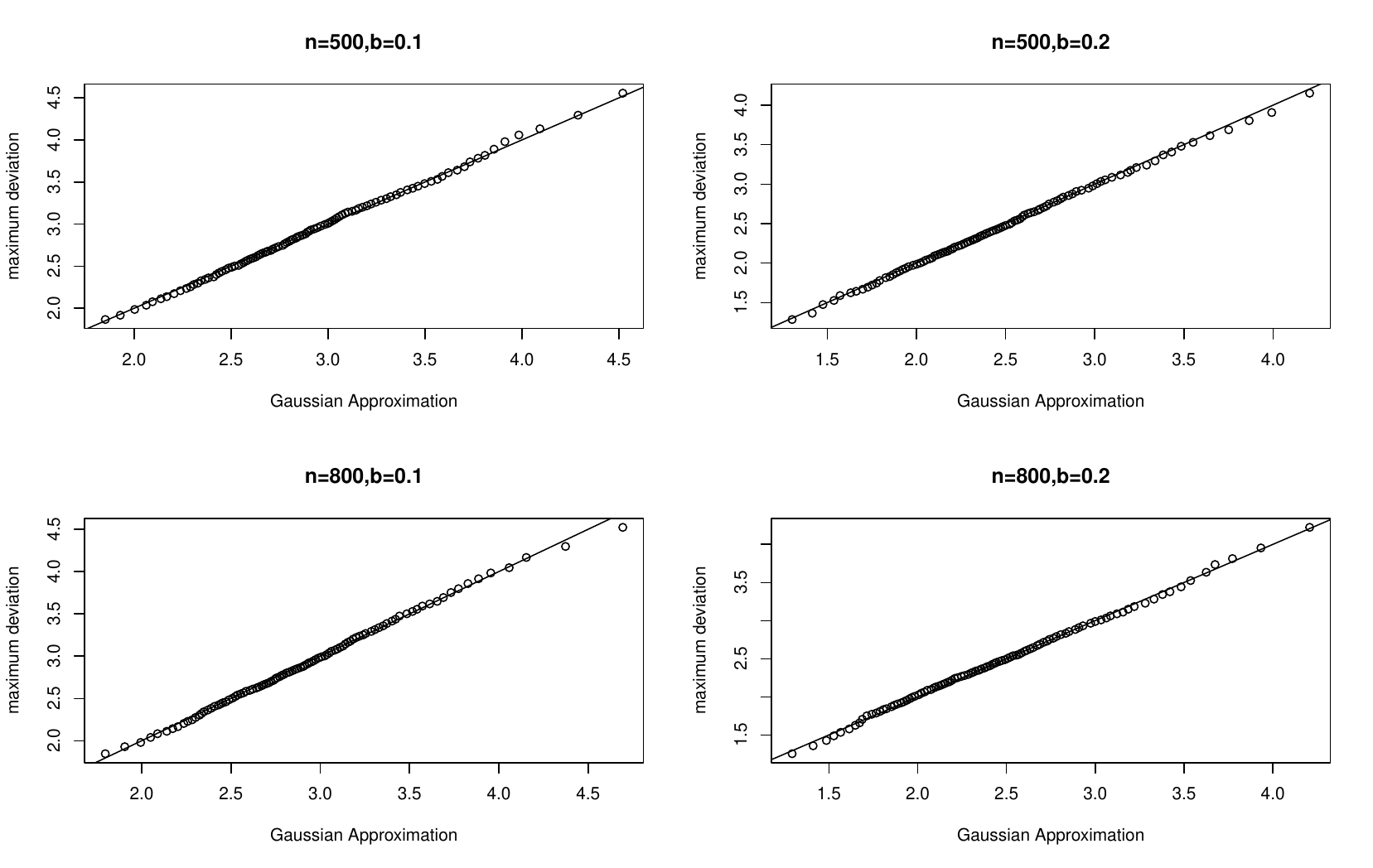}
\caption{\it Quantile-quantile plot of the $\max_{\substack{b_n \leq u\leq 1-b_n ,  0\leq t\leq 1}}\sqrt{nb_n}|\hat \Delta(u,t)|$  versus $\Big |\frac{1}{\sqrt{nb_n}}\sum_{i=1}^{2\lc nb_n\rc-1}\tilde Y_i \Big  |_\infty$ as described in Theorem \ref{SCB}.}\label{Check-Theorem1}
\end{figure}

\subsection{Empirical performance of the long-run variance estimator}
\label{checkLRV}
\textcolor{blue}{In this section we investigate the finite sample performance of the
difference based long-run variance estimator \eqref{lrv}. We examine the maximum  error 
\begin{align}
	\label{holger1}   
	\max_{1 \leq i\leq n ,1\leq j\leq p}|\hat \sigma(i/n,j/p)-\sigma(i/n,j/p)|
\end{align}
where $p=\lf n^{1/2}\rf$ as mentioned in Section \ref{sec51}. We consider model (a), (c), (b), (d) with sample size $n=500$ and $800$, respectively. The results are shown in Figure \ref{Boxplot}, where we display for each case the box plot of $2000$ simulations
of \eqref{holger1}. We observe that the  estimator works reasonably well and in all simulation scenarios the estimation error decreases as the sample size increases.}
\begin{figure}
\centering
\includegraphics[width=14cm, height=14cm]{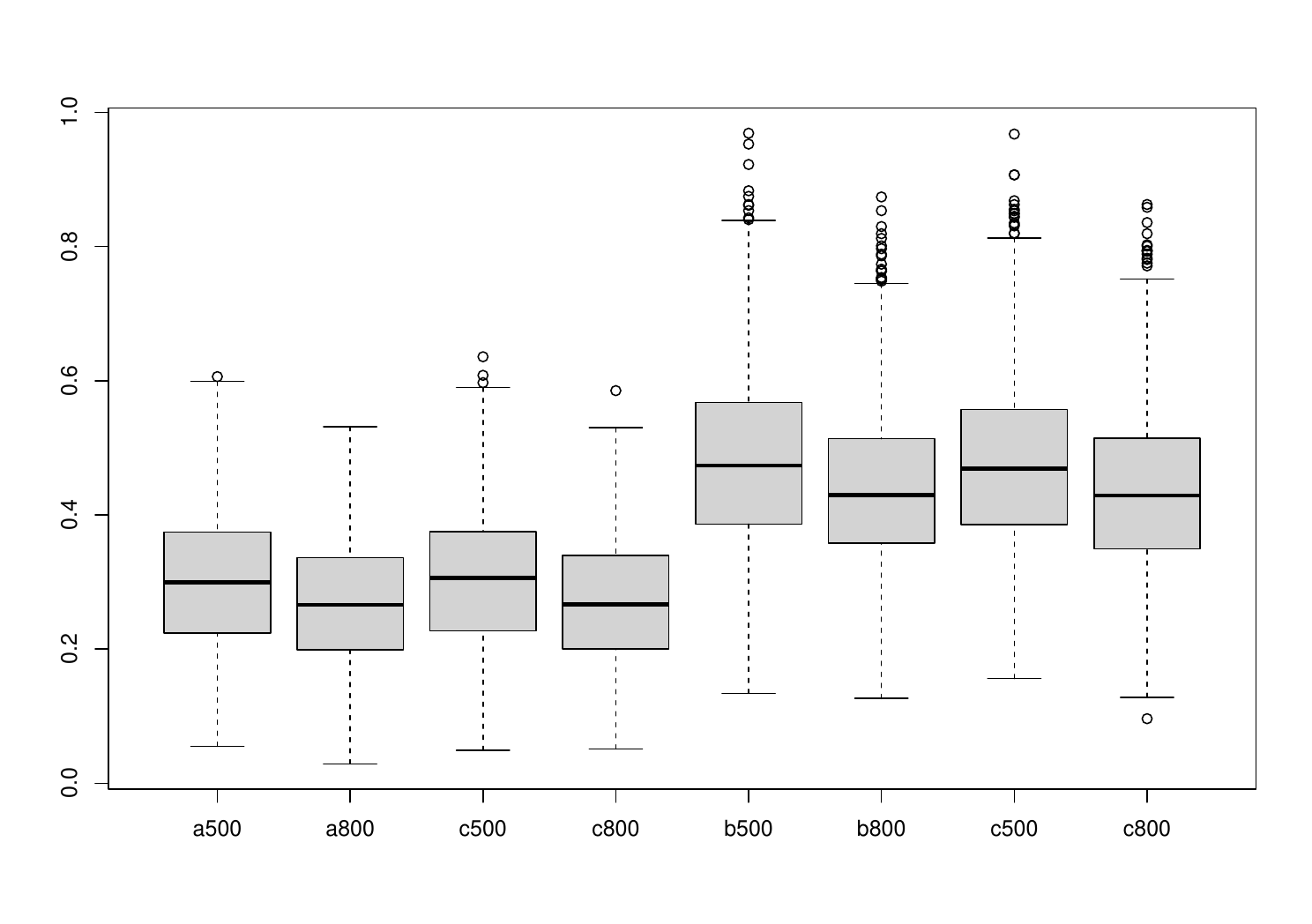}
\caption{\it Box plot of the simulated  estimation error \eqref{holger1} for model (a), (c), (b), (d) with sample size $500$ and $800$, respectively. The label $a500$ means model (a) for sample size $500$. Other labels can be understood similarly.}\label{Boxplot}
\end{figure}
\section{Discussion on the alternative assumptions of Theorem \ref{Thm2.1-2021}}
\setcounter{equation}{0}
In this section, we discuss alternative assumptions for Theorem \ref{Thm2.1-2021}. Some assumptions in the main paper can be relaxed yielding different approximation rates.
\begin{remark}\label{remrk3.1} ~
{\rm 
\begin{itemize}
	\item[(i)]
	A careful inspection of the proofs in Section \ref{proof-main} shows that it 
	is possible to prove similar results   under alternative moment assumptions. 
	For example, Theorem \ref{SCB} holds under the assumption 
	\begin{align}\label{3.9-new}
		\E \Big [  {\sup_{0\leq u,t\leq 1}(G(u,t,\FF_0)} )^4 \Big ] <\infty ~~ .
	\end{align}
	The details are omitted for the sake  of brevity. 
	Note that the $\sup$ in 
	\eqref{3.9-new} appears inside the expectation, while it appears outside 
	the expectation in \eqref{2.7a}. Thus neither \eqref{2.7a} implies 
	\eqref{3.9-new} nor vice versa.
	
	\item[(ii)]
	Assumption \ref{asserr}(2) 
	requires geometric 
	decay  of the dependence measure
	$\delta_q(G,i)$ and a careful inspection of the proofs in Section \ref{proof-main} 
	shows that similar (but weaker) results can be obtained under
	less restrictive assumptions.
	To be precise, define $\Delta_{k,q}=\sum_{i=k}^\infty \delta_q(G,i),$ $\Xi_{M}=\sum_{i=M}^\infty i\delta_2(G,i)$
	and  consider the following assumptions.
	\begin{description}
		\item (a) $\sum_{i=0}^\infty i\delta_3(G,i)<\infty$.
		\item (b) There exist constants  $M=M(n)>0$,  $\gamma=\gamma(n)\in (0,1)$ and $C_2> 0 $ such that 
		\begin{align*}
			(2\lceil nb_n\rceil )^{3/8}M^{-1/2}{l'_n}^{-5/8}\geq C_2l_n'
		\end{align*}
		where $l'_n=\max({\log (2\lceil nb_n\rceil (n-2\lceil nb_n \rceil+1) p/\gamma)},1)$.
	\end{description}
	Then  under the conditions of Theorem \ref{SCB} with Assumption  \ref{asserr}(2)
	replaced by (a) and (b), we have 
	\begin{align*}
		\mathfrak{P}_n
		= O\Big (\eta_n'+\Theta \Big (\sqrt{nb_n}(b_n^4+\frac{1}{n}), np \Big )+\Theta\big( \big((np)^{1/q^*}((nb_n)^{-1}+1/p)\big)^{\frac{q^*}{q^*+1}}, n p\big) \Big )
	\end{align*}
	with 
	\begin{eqnarray*}
		&&  \eta'_n=   (nb_n)^{-1 / 8} M^{1 / 2} {l'}_{n}^{7 / 8}+\gamma+\left((nb_n)^{1 / 8} M^{-1 / 2} {l'}_{n}^{-3 / 8}\right)^{q /(1+q)}\left(np \Delta_{M, q}^{q}\right)^{1 /(1+q)} \\ \nonumber
		&& \quad \quad +\Xi_{M}^{1 / 3}\left(1 \vee \log \left(np / \Xi_{M}\right)\right)^{2 / 3}.
	\end{eqnarray*}
	The same arguments as given in the proof of Theorem \ref{Thm2.1-2021} show that 
	(under the other conditions in this theorem) the set ${\cal C}_n$ defined by  \eqref{n02} 
	defines an (asymptotic) $(1-\alpha)$  simultaneous confidence surface if $\eta_n'=o(1)$. For example, if   $\delta_q(G,i)=O(i^{-1-\alpha})$
	for some $\alpha>0$, $p=n^{\beta}$ for some $\beta>0$ and  $b_n=n^{-\gamma}$ for some  $0<\gamma<1$,  then $\eta_n'=o(1)$ if $(1+\beta)-(1-\gamma)q\alpha/4<0$, which gives a lower bound on $q$.
\end{itemize}
}
\end{remark}

\setcounter{equation}{0}
\section{Simultaneous confidence bands for fixed $u$ or $t$}
\label{secA} 

\subsection{Theoretical background and algorithms} \label{seca1}

In this section, we present the simultaneous confidence band for the regression function
$(u,t) \to m(u,t)$ in model \eqref{1.1}, where one of the arguments  $u$ and $t$ is fixed. \WC{Let $\mathcal C^a$ be the class of functions with Lipschitz continuous $a_{th}$ order derivatives with bounded Lipschitz constant.}   
Consider 
\begin{itemize}
\item [(1)] simultaneous confidence bands {\it for fixed $t$}, which have the form
\begin{equation}
\label{2.1}
{\cal C}(t) =     \big \{
\WC{ f\in \WC{\mathcal C^{3}} }  ~| ~~ \hat L_1 (u,t) \leq f(u)  \leq  \hat U_1 (u,t) ~~\forall u  \big \} ~,
\end{equation}
where $\hat L_1$ and $\hat U_1$  are  appropriate lower and upper bounds calculated from the data.
As $t \in [0,1] $ is fixed  these bounds  can be derived 
generalizing results for confidence bands
in nonparametric regression 
from the independent 
\citep[see][among others]{kanakov1984,xia1998,proksch2016} to the locally stationary case 
\citep[see also][for results in
a model with a stationary  error  process]
{wu2007inference}.
An alternative approach based on multiplier bootstrap will be given below.
\item [(2)] simultaneous confidence bands {\it for fixed $u$}, which have the form
\begin{equation}
\label{2.2}
{\cal C}(u) =     \big \{
\WC{ f\in \WC{\mathcal C^{0}} } 
~| ~~ \hat L_2 (u,t) \leq f(t)  \leq  \hat U_2 (u,t) ~~\forall t \in [0,1] \big \} ~,
\end{equation}
where $\hat L_2$ and $\hat U_2$  are  appropriate lower and upper bounds calculated from the data.
Note that these bounds can not be directly calculated using results of
\cite{dettekokotaue2020}
as these 
authors develop their methodology  under the assumption of stationarity.
\end{itemize}

Recall the definition of the  residuals  $\hat \varepsilon_{i,n}(t)$  and  the long-run variance estimator $\hat \sigma$ 
in the main article.  
For  the construction of a  simultaneous confidence bands for a fixed $t\in[0,1]$ of the form  \eqref{2.1} we define  
\begin{align}
\hat Z_i(u,t)=K\Big(\tfrac{\frac{i}{n}-u}{b_n}\Big)\hat \varepsilon_{i,n}(t),\hat Z_{i,l}(t)=\hat Z_i(\tfrac{l}{n},t),
\notag\\
~~\hat Z^{\hat \sigma}_i(u,t)=K\Big(\tfrac{\frac{i}{n}-u}{b_n}\Big)\tfrac{\hat \varepsilon_{i,n}(t)}{\hat \sigma(\tfrac{i}{n},t)},
~~\hat Z^{\hat \sigma}_{i,l}(t)=\hat Z^{\hat \sigma}_i(\tfrac{l}{n},t). \nonumber 
\end{align}

Next we consider  the $(n-2\lceil nb_n \rceil+1)$-dimensional vectors
\begin{align}\label{new.7.14}
\hat{\tilde Z}_j(t)= \big (
\hat Z_{j,\lceil nb_n\rceil}(t),
\hat	Z_{j+1,\lceil nb_n\rceil+1}(t), 
\ldots ,
\hat	Z_{n-2\lceil nb_n\rceil+j, n-\lceil nb_n \rceil  }(t)
\big)^\top , \\
\hat{\tilde Z}^{\hat \sigma}_j(t)= \big (
\hat Z_{j,\lceil nb_n\rceil}^{\hat \sigma}(t),
\hat	Z_{j+1,\lceil nb_n\rceil+1}^{\hat \sigma}(t), 
\ldots ,
\hat	Z_{n-2\lceil nb_n\rceil+j, n-\lceil nb_n \rceil  }^{\hat \sigma}(t)
\big)^\top    \label{new.7.15}
\end{align}
$(1\leq j \leq 2\lceil nb_n \rceil-1)$, 
then a simultaneous confidence band for fixed $t\in [0,1]$ can be generated by the Algorithms \ref{algorithmt1}  (constant width)  and Algorithm \ref{algorithmt2} (varying width).
\newpage

\begin{small}
\begin{algorithm}[H]
\KwResult{ simultaneous confidence band of the form \eqref{2.1}  with fixed width 
}
\begin{itemize} 
	\item[(a)]  Calculate the 
	the $(n-2\lceil nb_n\rceil +1)$-dimensional vector   $\hat{\tilde Z}_j(t)$
	in \eqref{new.7.14}\;
	\item[(b)] For window size $m_n$, 
	let $m_n'=2\lfloor m_n/2\rfloor$, define 
	\begin{align}
		\label{hol10-fixt}
		\hat S_{jm_n'}(t)=
		\frac{1}{\sqrt {m_n'}}	\sum_{r=j}^{j+\lf m_n/2\rf -1 }\hat{\tilde  Z}_{r}(t)-	\frac{1}{\sqrt {m_n'}}\sum_{r=j+\lf m_n/2\rf }^{j+m'_n-1 }\hat{\tilde  Z}_{r}(t)
	\end{align} 
	Let  
	$\hat{ \varepsilon}_{j:j+m_n',k}(t)$ be the	
	$k_{th}$ component of
	$\hat S_{jm_n'}(t)$. 
	\item[(c)]  \For{r=1, \ldots , B  }{
		\smallskip
		- Generate independent standard normal distributed random variables $\{R^{(r)}_i\}_{i\in [1, n-m_n']}$.
		- Calculate 
		\begin{align*} \nonumber 
			T_k^{(r)}(t)& =\sum_{j=1}^{2\lceil nb_n \rceil-m_n'}
			\hat{ \varepsilon}_{j:j+m_n',k}(t) 
			{R^{(r)}_{k+j-1} }~,~~ k= 1 , \ldots  , n-2\lceil nb_n\rceil+1  , \\
			T^{(r)}(t) & =
			\max_{1\leq k\leq n-2\lceil nb_n\rceil+1} |T_k^{(r)}(t)|.
			\nonumber 
		\end{align*} 
		
		\bf{end}
	}
	\smallskip
	\item [(d)]
	Define $T_{\lf (1-\alpha)B\rf}(t)$ as the empirical $(1-\alpha)$-quantile of the  sample
	$T^{(1)}(t), \ldots , T^{(B)}(t)$ and 
	\begin{align*}
		\hat   L_3 (u,t)   = \hat m(u,t)  - \hat r_3(t) ~~,~~~~
		\hat  U_3 (u,t) =  \hat m(u,t) +   \hat r_3(t)
	\end{align*}
	where 
	$$
	\hat r_3(t) =\frac{\sqrt 2 T_{\lf (1-\alpha)B\rf} (t)}{\sqrt{nb_n}\sqrt{ 2\lceil nb_n\rceil-m'_n}
	} 
	$$
	{\bf Output:}   ~~
	$  {\cal C}_n (t)=     \big \{ f\in \WC{\WC{\mathcal C^{3}}} : [0,1]^2 \to \R ~| ~~ \hat L_3 (u,t) \leq f(u)  \leq  \hat U_3 (u,t) ~~\forall   u \in  {[b_n,1-b_n]}  \big \} 
	$
\end{itemize}
\caption{}\label{algorithmt1}
\end{algorithm}
\end{small}

\begin{small}

\begin{algorithm}[H]
\KwResult{ simultaneous confidence band of the form \eqref{2.1}  
	with varying width
}

\begin{itemize} 
	\item[(a)] Calculate the  estimate of the long-run variance  $\hat \sigma^2$
	in \eqref{2018-5.4}
	
	\item[(b)]  Calculate the 
	$(n-2\lceil nb_n\rceil +1)$-dimensional  
	vectors   $\hat{\tilde Z}^{\hat \sigma}_j(t)$
	in \eqref{new.7.15}
	\item[(c)] For window size $m_n$, 
	let $m_n'=2\lfloor m_n/2\rfloor$, define 
	\begin{align*}
		\hat S^{\hat \sigma}_{jm_n'}(t)& =
		\frac{1}{\sqrt {m_n'}}	\sum_{r=j}^{j+\lf m_n/2\rf -1 }\hat{\tilde  Z}^{\hat \sigma}_{r}(t)-	\frac{1}{\sqrt {m_n'}}\sum_{r=j+\lf m_n/2\rf}^{j+m'_n-1 }\hat{\tilde  Z}^{\hat \sigma}_{r}(t)
	\end{align*} 
	Let  $\hat S^{\hat \sigma}_{jm_n',k}(t)$ be the $k_{th}$ component of	$\hat S^{\hat \sigma}_{jm_n'}(t)$.
	Let  
	$\hat{ \varepsilon}^{\ \hat \sigma}_{j:j+m_n',k}(t)$ be the	
	$k_{th}$ component of
	$\hat S^{\hat \sigma}_{jm_n'}(t)$. 
	\item[(d)]  \For{r=1, \ldots , B  }{
		\smallskip
		- Generate independent standard normal distributed random variables $\{R^{(r)}_i\}_{i\in [1,n-m_n']}$.
		
		- Calculate 
		\begin{align*} \nonumber 
			T_k^{\hat \sigma,(r)}(t)& =\sum_{j=1}^{2\lceil nb_n \rceil-m_n'}
			\hat{ \varepsilon}^{\ \hat \sigma}_{j:j+m_n',k}(t) {R^{(r)}_{k+j-1}} ~,~~ k= 1 , \ldots  , n-2\lceil nb_n\rceil+1  , \\
			T^{\hat \sigma,(r)}(t) & =
			\max_{1\leq k\leq n-2\lceil nb_n\rceil+1} |T_k^{\hat \sigma,(r)}(t)|.
			\nonumber 
		\end{align*} 
		
		\bf{end}
	}
	\smallskip
	\item [(e)]
	Define $T^{\hat \sigma}_{\lf (1-\alpha)B\rf}(t)$ as the empirical $(1-\alpha)$-quantile of the sample
	$T^{\hat \sigma,(1)}(t), \ldots , T^{\hat \sigma,(B)}(t)$ and 
	\begin{align*}
		\hat   L^{\hat \sigma}_4 (u,t)  & = \hat m(u,t) -
		\hat r_4 (u,t),~~~~
		\hat  U^{\hat \sigma}_4 (u,t)  =  \hat m(u,t) +  \hat r_4 (u,t) 
	\end{align*}
	where
	$$
	\hat r_4 (u,t) =  \frac{\hat \sigma(u,t)\sqrt 2 T^{\hat \sigma } _{\lf (1-\alpha)B\rf}(t) }{\sqrt{nb_n}\sqrt{ 2\lceil nb_n\rceil-m'_n}}
	$$
	{\bf Output:} 
	\begin{equation*} 
		\label{3.12-v3}
		{\cal C}^{\hat \sigma}_n(t) =     \big \{ f\in \WC{\WC{\mathcal C^{3}}} : [0,1]^2 \to \R ~| ~~ \hat L^{\hat \sigma}_4 (u,t) \leq f(u)  \leq  \hat U^{\hat \sigma}_4 (u,t) ~~\forall   u \in  {[b_n,1-b_n]} \big \}. 
	\end{equation*}
\end{itemize}
\caption{ }\label{algorithmt2}
\end{algorithm}

\end{small}

The following result shows that the sets constructed  by  Algorithms  \ref{algorithmt1} and \ref{algorithmt2} 
are asymptotic $(1-\alpha)$-confidence bands of the form \eqref{2.1}.
The proof  is similar to but easier than the proof of Theorems \ref{Thm2.1-2021} and  \ref{sigma-asy}
is therefore omitted for the sake of brevity.

\begin{theorem}
\label{fixt-SCB}Assume that the conditions of Theorem \ref{SCB} hold.
Define
$$\vartheta^\dag_n=\frac{\log^2 n}{m_n}+\frac{m_n\log n}{nb_n}+\sqrt{\frac{m_n}{nb_n}}n^{4/q}.
$$
\begin{description}
\item (i) 
If 
$\vartheta^{\dag,1/3}_n\{1\vee \log (\frac{n}{\vartheta^\dag_n})\}^{2/3}+\Theta(\big(\sqrt{m_n\log n}(\frac{1}{\sqrt{nb_n}}+b_n^3)(n)^{\frac{1}{q}}\big)^{q/(q+1)},n)=o(1)$
we have that for any  $\alpha\in(0,1)$
and any $t \in [0,1]$ 
\begin{align*}
	\lim_{n\rightarrow \infty} \lim_{B\rightarrow \infty}  \p(m \in \mathcal C_n(t)  ~ |~{\cal F}_n)=1-\alpha
\end{align*}
in probability.
\item (ii) If further the conditions of Theorem \ref{sigma-asy} and Proposition \ref{prop-lrv} hold, then
\begin{align*}
	\lim_{n\rightarrow \infty} \lim_{B\rightarrow \infty}  \p(m \in \mathcal C^{\hat \sigma}_n(t)  ~ |~{\cal F}_n)=1-\alpha
\end{align*}
in probability.
\end{description}
\end{theorem}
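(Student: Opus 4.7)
My plan is to follow the template of the proofs of Theorem \ref{Thm2.1-2021} and Theorem \ref{sigma-bootstrap}, but working in the one-parameter setting where $t$ is frozen. The key simplification is that the vectors $\hat{\tilde Z}_j(t)$ in \eqref{new.7.14} and $\hat{\tilde Z}^{\hat\sigma}_j(t)$ in \eqref{new.7.15} have dimension $(n-2\lceil nb_n\rceil+1)$ rather than $(n-2\lceil nb_n\rceil+1)p$, so everywhere a factor of $np$ appeared in an error bound we now only see $n$. Concretely, I would first establish the one-parameter analog of Theorem \ref{SCB}: there exist centred Gaussian vectors $\tilde Y_1(t),\ldots,\tilde Y_{2\lceil nb_n\rceil-1}(t)$ with the same auto-covariance structure as $\tilde Z_i(t) = (Z_{i,\lceil nb_n\rceil}(t),\ldots,Z_{n-2\lceil nb_n\rceil+i,n-\lceil nb_n\rceil}(t))^\top$ such that
\begin{align*}
\sup_{x}\Big|\mathbb{P}\Big(\max_{b_n\leq u\leq 1-b_n}\sqrt{nb_n}|\hat\Delta(u,t)|\leq x\Big)-\mathbb{P}\Big(\Big|\tfrac{1}{\sqrt{nb_n}}\sum_{i=1}^{2\lceil nb_n\rceil-1}\tilde Y_i(t)\Big|_\infty\leq x\Big)\Big| = o(1).
\end{align*}
This follows by the same Gaussian approximation of \cite{zhang2018gaussian} applied to the sparse vector $(Z_{i,\lceil nb_n\rceil}(t),\ldots,Z_{i,n-\lceil nb_n\rceil}(t))^\top$, followed by the grid-to-interval reduction using the stochastic Lipschitz bound $\|\partial_u W_n(u,t)\|_{q^*} \lesssim 1/b_n$ from \eqref{partialnorm}. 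Note that for fixed $t$ we do \emph{not} invoke Assumption \ref{asssim}, which is precisely why the rate $\vartheta_n^\dag$ drops the $p$-factor.

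Next, for part (i), I would replicate the bootstrap-calibration argument from the proof of Theorem \ref{Thm2.1-2021} verbatim in the lower-dimensional setting. That is, I would define the conditional Gaussian vector $T^\diamond(t)$ from the multiplier statistic of Algorithm \ref{algorithmt1}, compute its conditional covariance analog of \eqref{lag-Tdiamond}, and match it to $\sigma^{\tilde Y(t)}$ using the same two-case split ($|k_2-k_1|>2\lceil nb_n\rceil-m_n$ versus $\leq$). The key covariance-matching bound \eqref{eq67-2021} becomes $O_{\mathcal{L}^{q/2}}(\vartheta_n^\dag)$ once $p$ is removed, which yields via Theorem 2 of \cite{chernozhukov2015comparison} a rate of $(\vartheta_n^\dag)^{1/3}\{1\vee\log(n/\vartheta_n^\dag)\}^{2/3}$. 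Finally the difference between $T^\diamond(t)$ computed from true errors $\varepsilon_{i,n}$ and from residuals $\hat\varepsilon_{i,n}$ is handled exactly as in \eqref{diffTdiamond}--\eqref{det22}, giving the local-linear bias/variance penalty $\Theta(\bigl(\sqrt{m_n\log n}\,(1/\sqrt{nd_n}+d_n^2)n^{1/q}\bigr)^{q/(q+1)},n)$.

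For part (ii), I would add one further layer mirroring the proof of Theorem \ref{sigma-bootstrap}: on the event $A_n=\{\sup_{u,t}|\hat\sigma^2-\sigma^2|>(g_n+\tau_n)\eta_n\}$ Proposition \ref{prop-lrv} and Markov's inequality give $\mathbb{P}(A_n)=O(\eta_n^{-q'})$, and on $A_n^c$ the replacement of $\sigma$ by $\hat\sigma$ inside the multiplier statistic contributes an additional error of order $\sqrt{m_n}(g_n+\tau_n)\eta_n \cdot n^{1/q}$ in the conditional sup-norm, which propagates through the conditional anti-concentration lemma in the same way as in the main proof. The anti-concentration step (Lemma \ref{anti}) and the Gaussian approximation from part (i) together yield the claimed conditional limit.

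The main technical obstacle is identical to the one in the proof of Theorem \ref{Thm2.1-2021}, namely controlling the $\mathcal{L}^{q/2}$ deviation of the multiplier covariance from the target long-run covariance uniformly over the $O(n^2)$ entries; however, since the grid in $t$ has disappeared, the covariance-matching bound \eqref{eq88-2021} loses its $(np)^{4/q}$ factor in favor of $n^{4/q}$, and no new ideas beyond those already deployed in Sections 5.2--5.4 of the main text are required. For this reason omitting the proof is justified, but the outline above gives all necessary modifications.
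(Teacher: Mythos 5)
Your proposal is correct and follows exactly the approach the paper intends: the paper states that this result "follows by similar (but easier) arguments" as the proofs of Theorems \ref{Thm2.1-2021} and \ref{sigma-asy}, and your outline reproduces those arguments with the $t$-direction frozen, which reduces the dimension of $\hat{\tilde Z}_j(t)$ from $(n-2\lceil nb_n\rceil+1)p$ to $(n-2\lceil nb_n\rceil+1)$ and hence replaces every $np$ by $n$ in the rates. Two small imprecisions worth flagging: the disappearance of $p$ in $\vartheta_n^\dag$ is driven by this dimension reduction in the analog of \eqref{eq88-2021} rather than by dropping Assumption \ref{asssim} (which is still assumed via "the conditions of Theorem \ref{SCB}"); and for part (ii) you additionally need the fixed-$t$ analog of the $\sigma$-normalized Gaussian approximation of Theorem \ref{sigma-asy} before the $\hat\sigma$-replacement step — the unnormalized approximation from part (i) alone does not target $\hat\Delta^{\hat\sigma}$.
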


The next theorem presents a Gaussian approximation in the case where $u$ is fixed.  It is the basis for the construction of a confidence band for fixed $u$ and its proof
follows by 
similar (but easier)  arguments as given in  the proof 
of  Theorem \ref{SCB}.

\begin{theorem}\label{single-SCB}
Let Assumptions \ref{assreg} -  \ref{asskern}
be satisfied and  assume that the bandwidth in \eqref{estimating-m}
satisfies that $n^{1+a}b_n^9=o(1)$, $n^{a-1}b_n^{-1}=o(1)$ for  some $0<a<4/5$.
For any fixed 		$u \in (0,1)$ there exists 
a sequence of centered   $p$-dimensional Gaussian vectors $ (Y_i(u) )_{i \in \mathbb{N}}$  with the same  covariance structure as  the vector  $Z_i(u)$  in \eqref{3.1}, such that
\begin{align*}
\mathfrak{P}_n(u)& :=
\sup_{x\in \mathbb R} \Big |\p \Big  (\max_{0\leq t\leq 1}\sqrt{nb_n}|\hat \Delta(u,t)|\leq x \Big )-\p \Big ( \Big  |\frac{1}{\sqrt{nb_n}}\sum_{i=1}^nY_i(u) \Big |_\infty\leq x\Big  )  \Big  | \\
& = O \Big ((nb_n)^{-(1-11\iota)/8}  +\Theta \Big (\sqrt{nb_n}\Big(b_n^4+\frac{1}{n}\Big), p\Big )+\Theta \Big  (p^{\frac{1-q^*}{1+q^*}}, p \Big  )\Big ) 
\end{align*}
for any sequence $ p \to \infty$ with 	$p = O (\exp(n^{\iota}))$ 	 for some $0\leq \iota <1/11$. 
In particular, $
\mathfrak{P}_n(u)= o(1)
$ 
if $p=n^c$ for some $c>0$ and the constant $q^*$  in Assumption \ref{asssim} is sufficiently large.
\end{theorem}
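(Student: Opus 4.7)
The plan is to follow the strategy of Theorem \ref{SCB} but in the simpler one-dimensional setting where $u$ is held fixed, so that only the variable $t$ needs to be discretized. Recall the notation $W_n(u,t) = \sqrt{nb_n}(\hat m(u,t) - \E \hat m(u,t))$ from \eqref{3.3}. First I would handle the bias: by Assumption \ref{assreg} together with $\int K(v)v^2\, dv = 0$ from Assumption \ref{asskern}, a standard Taylor expansion gives
\begin{align*}
\sup_{t\in[0,1]}\bigl|\E(\hat m(u,t))-m(u,t)\bigr| = O\bigl(b_n^4 + (nb_n)^{-1}\bigr),
\end{align*}
so multiplying by $\sqrt{nb_n}$ produces the deterministic error term $\sqrt{nb_n}(b_n^4 + (nb_n)^{-1})$ that enters the bound via Lemma \ref{anti}.

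Next, Proposition \ref{Grid-Result}(i) already provides the Gaussian approximation on the grid $\{t_v = v/p : 1\le v\le p\}$, namely
\begin{align*}
\sup_{x\in\R}\Bigl|\p\Bigl(\max_{1\le v\le p}|W_n(u,t_v)|\le x\Bigr)-\p\Bigl(\Bigl|\tfrac{1}{\sqrt{nb_n}}\sum_{i=1}^n Y_i(u)\Bigr|_\infty\le x\Bigr)\Bigr| = O\bigl((nb_n)^{-(1-11\iota)/8}+\Theta(\sqrt{nb_n}(b_n^4 + (nb_n)^{-1}), p)\bigr),
\end{align*}
where $Y_i(u)$ are the Gaussian vectors with the same covariance as $Z_i(u)$. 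Thus the work reduces to bounding the discretization error
\begin{align*}
\tilde W_n(u) := \max_{1\le v\le p}\sup_{|t - t_v|\le 1/p}\bigl|W_n(u,t) - W_n(u,t_v)\bigr|.
\end{align*}

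To control $\tilde W_n(u)$, I would use Assumption \ref{asssim}: since the filter $G$ is differentiable in $t$ with geometrically decaying physical dependence of $G_2 = \partial_t G$ under $\|\cdot\|_{q^*}$, an application of Burkholder's inequality (as in the derivation of \eqref{partialnorm} but only for the $t$-derivative, which does not pick up a $1/b_n$ factor) yields
\begin{align*}
\sup_{u,t\in[0,1]}\Bigl\|\tfrac{\partial}{\partial t}W_n(u,t)\Bigr\|_{q^*}\le M.
\end{align*}
Then by integrating and using $\|\max_{1\le v\le p} X_v\|_{q^*} \le p^{1/q^*}\max_v \|X_v\|_{q^*}$, one obtains $\|\tilde W_n(u)\|_{q^*} = O(p^{1/q^*-1})$. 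Combining the grid approximation with Lemma \ref{anti} in Section \ref{aux} and Markov's inequality produces a bound of the form
\begin{align*}
\mathfrak{P}_n(u) \lesssim (nb_n)^{-(1-11\iota)/8} + \Theta(\sqrt{nb_n}(b_n^4 + (nb_n)^{-1}), p) + \Theta(\delta, p) + (p^{1/q^*-1}/\delta)^{q^*},
\end{align*}
and balancing $\delta \asymp (p^{1/q^*-1}/\delta)^{q^*}$, i.e.\ $\delta = p^{(1-q^*)/(1+q^*)}$, yields the asserted rate. The claim $\mathfrak{P}_n(u) = o(1)$ under $p = n^c$ and $q^*$ large is then an immediate verification using $np \le \exp(n^\iota)$.

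The main obstacle is really only the discretization step in $t$, but compared to Theorem \ref{SCB} this is considerably easier: there is no $u$-direction to discretize (so no $1/b_n$ factor appears in the modulus of continuity) and no analogue of \eqref{tildeZi} is required. The sparsity issue that forced the nonstandard bootstrap construction in the general case is entirely absent here, so one can apply the vector-valued Gaussian approximation of \cite{zhang2018gaussian} directly to the single block $Z_1(u),\ldots,Z_n(u)$ as in Proposition \ref{Grid-Result}(i), and the proof proceeds without the intricate rearrangement of blocks.
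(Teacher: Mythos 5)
Your proposal is correct and follows essentially the same route the paper has in mind: the paper does not spell out a proof of Theorem~\ref{single-SCB} but states explicitly that it "follows by similar (but easier) arguments as given in the proof of Theorem~\ref{SCB}," and your write-up reconstructs exactly those arguments — applying Proposition~\ref{Grid-Result}(i) for the grid step, controlling the one-dimensional modulus of continuity in $t$ via the $\|\partial_t W_n\|_{q^*}\le M$ bound from \eqref{partialnorm} (correctly observing that only the $u$-derivative picks up the $1/b_n$ factor), and balancing with Lemma~\ref{anti} to arrive at $\delta=p^{(1-q^*)/(1+q^*)}$. You also correctly identify why the sparsity/rearrangement machinery of $\tilde Z_i$ is unnecessary in the fixed-$u$ case.
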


\newpage

\begin{small}
\begin{algorithm}[H]
\medskip

\SetAlgoLined
\KwResult{  simultaneous confidence band     for fixed $u\in [b_n,1-b_n]$ as defined in \eqref{2.2} 
}

\begin{itemize} 
	\item[(a)]
	Calculate the $p$-dimensional  vectors  $\hat{Z}_i(u)$
	in \eqref{3.10} 
	\item[(b)] For window size $m_n$, 
	let $m_n'=2\lfloor m_n/2\rfloor$,  define 
	\begin{align*}
		\hat S_{jm_n'}(u)=
		\frac{1}{\sqrt {m_n'}}	\sum_{r=j}^{j+\lf m_n/2\rf -1 }\hat{ Z}_{r}(u)-	\frac{1}{\sqrt {m_n'}}\sum_{r=j+\lf m_n/2 \rf }^{j+m'_n-1 }\hat{ Z}_{r}(u)
	\end{align*} 
	\item[(c)] 
	\For{r=1, \ldots , B  }{
		
		-  Generate independent standard normal distributed random variables $
		\{R_i^{(r)} \}_{i = \lceil nu-nb_n \rceil }^{ \lf nu+nb_n\rf}
		$
		- Calculate  the bootstrap statistic  
		\begin{align*}
			T^{(r)}(u) 
			= \Big |\sum_{j=\lceil nu-nb_n\rceil}^{\lf nu+nb_n\rf-m_n'+1} \hat S_{jm_n'}(u)R_j^{(r)}\Big |_\infty
		\end{align*}
		~
	}
	
	\smallskip 
	\item[(d)] Define  $T_{\lf (1-\alpha)B\rf}(u)$ as the empirical $(1-\alpha)$-quantile of the  sample $T^{(1)}(u), \ldots , T^{(B)}(u)$ and  
	\begin{align*}
		\hat   L_5 (u,t)   = \hat m(u,t) - \hat r_5(u)~~,~ ~~~
		\hat  U_5 (u,t) =  \hat m(u,t) + \hat r_5(u)~,
	\end{align*}
	where
	$$\hat r_5(u)=\frac{\sqrt 2 T_{\lf (1-\alpha)B\rf}(u) }{\sqrt{nb_n}\sqrt{( \lfloor nu+nb_n\rfloor-\lceil nu-nb_n \rceil -m'_n+2)}}
	$$
	{\bf Output:}
	\begin{equation} \label{nn01}
		{\cal C}_n (u)  =     \big \{ f\in \WC{\WC{\mathcal C^{0}}} : [0,1]^2 \to \R ~| ~~ \hat L_5 (u,t) \leq f(t)  \leq  \hat U_5 (u,t) ~~\forall  t \in  [0,1] \big \} .
	\end{equation}
\end{itemize}

\caption{}
\label{algorithm0}
\end{algorithm}
\end{small}


\begin{small}
\begin{algorithm} [H]

\medskip

\SetAlgoLined
\KwResult{simultaneous confidence band  of the form \eqref{2.2} with varying width.
}

\begin{itemize} 
	\item[(a)] 
	For given $u\in [b_n,1-b_n]$, calculate the the estimate of the long-run variance  $\hat \sigma^2(u,\cdot)$
	in \eqref{2018-5.4}
	
	\item[(b)]
	Calculate the vector   $\hat{Z}^{\hat \sigma_u}_i(u)$
	in \eqref{Z(u)sigma}\;
	\item[(c)] For window size $m_n$, 
	let $m_n'=2\lfloor m_n/2\rfloor$ and define 
	the $p$-dimensional  random vectors
	\begin{align*}
		\hat S^{\hat \sigma_u}_{jm_n'}(u)& =
		\frac{1}{\sqrt {m_n'}}	\sum_{r=j}^{j+\lf m_n/2\rf-1 }\hat{  Z}^{\hat \sigma_u}_{r}(u)-	\frac{1}{\sqrt {m_n'}}\sum_{r=j+\lf m_n/2\rf}^{j+m'_n-1 }\hat{  Z}^{\hat \sigma_u}_{r}(u)
	\end{align*} 
	\item[(d)] \For{r=1, \ldots , B  }{
		\smallskip
		
		-  Generate independent standard normal distributed random variables $
		\{R_i^{(r)} \}_{i = \lceil nu-nb_n \rceil}^{   \lf nu+nb_n\rf}
		$
		- Calculate  the bootstrap statistic  
		\begin{align*}
			T^{\hat \sigma_u, (r)}(u) 
			= \Big |\sum_{j=\lceil nu-nb_n\rceil}^{\lf nu+nb_n\rf-m_n'+1} \hat S^{\hat \sigma_u}_{jm_n'}(u)R_j^{(r)}\Big |_\infty
		\end{align*}

	}
	
	\smallskip 
	\item[(e)] Define  $T^{\hat \sigma_u}_{\lf (1-\alpha)B\rf}(u)$ as the empirical $(1-\alpha)$-quantile of the sample $T^{\hat \sigma_u,(1)}(u), \ldots , T^{\hat \sigma_u,(B)}(u)$ and 
	\begin{align*}
		\hat   L^{\hat \sigma_u}_6 (u,t)   = \hat m(u,t) - \hat r^{\hat \sigma_u}_6(u,t)~~,~ ~~~
		\hat  U^{\hat \sigma_u}_6 (u,t) =  \hat m(u,t) + \hat r^{\hat \sigma_u}_6(u,t),
	\end{align*}
	where
	$$\hat r^{\hat \sigma_u}_6(u,t)=\frac{\hat \sigma(u,t)\sqrt 2 T^{\hat \sigma_u}_{\lf (1-\alpha)B\rf}(u) }{\sqrt{nb_n}\sqrt{( \lfloor nu+nb_n\rfloor-\lceil nu-nb_n \rceil -m'_n+2)}}
	$$
	{\bf Output:}
		\begin{equation}
			\label{nn02}
			{\cal C}^{\hat \sigma_u}_n (u)  =     \big \{ f \in \WC{\WC{\mathcal C^{0}}}: [0,1]^2 \to \R ~| ~~ \hat L^{\hat \sigma_u}_6 (u,t) \leq f(t)  \leq  \hat U^{\hat \sigma_u}_6 (u,t) ~~\forall  
			{t\in [0,1]} \big \}.  
		\end{equation}  
\end{itemize}

\caption{}\label{algorithm0sigma}
\end{algorithm}
\end{small}


Next we present details of the algorithms for a simultaneous confidence band for a fixed $u$ (of the form \eqref{2.2})  with fixed and varying width. 
For this purpose we define the $p$-dimensional vector
\begin{align}\label{Z(u)sigma}
\hat Z^{\hat \sigma_u}_{i}(u)
& = (\hat Z^{\hat \sigma_u}_{i,1}(u), \ldots , \hat Z^{\hat \sigma_u}_{i,p}(u))^\top \\
\nonumber 
& = K \Big (\frac{\tfrac{i}{n}-u}{b_n} \Big ) \Big  (\tfrac{\hat \varepsilon_{i,n}(\tfrac{1}{p})}{\hat \sigma(u,\tfrac{1}{p})}
,\tfrac{\hat \varepsilon_{i,n}(\tfrac{2}{p})}{\hat \sigma(u,\tfrac{2}{p})}
,\ldots ,\tfrac{\hat \varepsilon_{i,n} (\tfrac{p-1}{p})}{\hat \sigma(u,\tfrac{p-1}{p})},
\tfrac{\hat \varepsilon_{i,n}(1)}{\hat \sigma(u,1)}
\Big )^\top~,
\end{align}
where $\hat \varepsilon_{i,n} $ and $\hat \sigma $ are defined 
in the main article, respectively. 
Algorithms  \ref{algorithm0} and \ref{algorithm0sigma} provides asymptotically correct the confidence bands of type \eqref{2.2}. The next Theorem \ref{fixu-SCB} yields the validity of Algorithms  \ref{algorithm0} and \ref{algorithm0sigma}, which is a consequence of Theorem  \ref{single-SCB}.

\begin{theorem}
\label{fixu-SCB}
Assume that the conditions of Theorem \ref{SCB} hold.
Define
$$\vartheta'_n=\frac{\log^2 n}{m_n}+\frac{m_n\log n}{nb_n}+\sqrt{\frac{m_n}{nb_n}}p^{4/q}
$$
and assume that 
$ p \to \infty$ such that  
$p = O (\exp(n^{\iota}))$ 	 for some $0\leq \iota <1/11$.
\begin{description}
\item(1) If $\alpha\in (0,1)$ and 
$$
\vartheta'^{1/3}_n
\Big \{1\vee \log \Big(\frac{p}{\vartheta'_n}\Big)\Big\}^{2/3}+\Theta \Big (\Big(\sqrt{m_n\log p} \Big (\frac{1}{\sqrt{nb_n}}+b_n^3 \Big )p^{\frac{1}{q}}\Big)^{q/(q+1)},p \Big )=o(1),
$$
then we have  for the confidence band in \eqref{nn01}
\begin{align*}
	\lim_{n\rightarrow \infty} \lim_{B\rightarrow \infty}  \p(m \in \mathcal C_n(u)  ~ |~{\cal F}_n)=1-\alpha
\end{align*}
in probability.
\item (ii) If further the conditions of Theorem \ref{sigma-asy} and Proposition \ref{prop-lrv} hold, then
for the confidence band in
\eqref{n02}
\begin{align*}
	\lim_{n\rightarrow \infty} \lim_{B\rightarrow \infty}  \p(m \in \mathcal C^{\hat \sigma}_n(u)  ~ |~{\cal F}_n)=1-\alpha
\end{align*}
in probability.
\end{description}

\end{theorem}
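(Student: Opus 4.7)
The overall strategy will mirror the one used in the proofs of Theorem \ref{Thm2.1-2021} and Theorem \ref{sigma-bootstrap}, but simplified because for fixed $u$ only a single block-shift index appears: for each $j \in [\lceil nu-nb_n\rceil, \lfloor nu+nb_n\rfloor-m'_n+1]$ the block receives one independent multiplier $R_j^{(r)}$, and no artificial sparsity is introduced. Thus the ``non-standard'' multiplier structure of the surface case collapses to a classical block multiplier bootstrap. Starting from Theorem \ref{single-SCB}, which approximates $\max_t \sqrt{nb_n}|\hat\Delta(u,t)|$ by $|\tfrac{1}{\sqrt{nb_n}}\sum_{i=1}^n Y_i(u)|_\infty$ with Gaussian vectors $Y_i(u)$ matching the covariance of $Z_i(u)$, the plan is to show that conditional on $\mathcal F_n$ the bootstrap statistic $T^{(r)}(u)/\sqrt{\lfloor nu+nb_n\rfloor-\lceil nu-nb_n\rceil -m'_n+2}$ has approximately the same Gaussian law.

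For part (i), I would first replace the residuals $\hat\varepsilon_{i,n}$ used in $\hat Z_i(u)$ by the true $\varepsilon_{i,n}$ to obtain an idealized bootstrap statistic $\tilde T^{(r)}(u)$. Conditional on $\mathcal F_n$, both $T^{(r)}(u)$ and $\tilde T^{(r)}(u)$ are Gaussian and their difference is controlled by the concentration-of-Gaussian-maxima argument used to establish \eqref{diffTdiamond}, yielding a rate governed by $\sqrt{m_n\log p}(1/\sqrt{nd_n}+d_n^2)p^{1/q}$. Next, I would compute the conditional covariance of $\tilde T^{(r)}(u)$, which is a sum of cross-products of the difference-of-block sums $\hat S_{jm'_n}(u)$ at lags $k_2-k_1$, and show it concentrates around the population covariance $\sigma^{Y(u)}_{j_1,j_2}=\tfrac{1}{n}\sum_{i} \mathrm{Cov}(Z_{i,j_1}(u),Z_{i,j_2}(u))$ plus lagged auto-covariances. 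This is exactly the calculation leading to \eqref{eq67-2021} but with one block-index removed, and will produce the rate $\vartheta'_n$. Applying the Gaussian comparison theorem of \cite{chernozhukov2015comparison} then yields the analogue of \eqref{Tdiamondbound1}, after which Lemma \ref{anti} combines the two error terms into the rate hypothesized in the statement.

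For part (ii), I would insert an intermediate statistic $T^{\sigma_u,(r)}(u)$ built from the true long-run variance $\sigma$, prove its validity exactly as in part (i) applied to the normalized vectors $Z_i^\sigma(u)$ (and using the Gaussian approximation implicit in Theorem \ref{sigma-asy}), and then control the discrepancy $T^{\hat\sigma_u,(r)}(u)-T^{\sigma_u,(r)}(u)$ on the event
\[
A_n=\Big\{\sup_{u,t}|\hat\sigma^2(u,t)-\sigma^2(u,t)|>(g_n+\tau_n)\eta_n\Big\},
\]
whose probability is $O(\eta_n^{-q'})$ by Proposition \ref{prop-lrv} and Markov's inequality. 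On $A_n^c$, the conditional Gaussian-maximum bound of \eqref{6-18-125} together with a Markov argument gives control of the form $\Theta((\sqrt{m_n\log p}(g_n+\tau_n)\eta_n p^{1/q})^{q/(q+1)},p)$, which is absorbed into the assumed rate.

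The main obstacle, as in the surface case, will be the concentration of the conditional covariance of the block-multiplier sum around the target long-run covariance of $Y(u)$. The lag-decomposition argument analogous to \eqref{eq71}--\eqref{eq87-2021} must be carried out carefully to separate the bias term of order $\log^2 n/m_n+m_n\log n/(nb_n)$ from the stochastic fluctuation of order $\sqrt{m_n/(nb_n)}\,p^{4/q}$ (Burkholder plus geometric dependence). Everything else—the plug-in of $\hat\varepsilon_{i,n}$, the plug-in of $\hat\sigma$, and the passage from maxima to probability bounds via Lemma \ref{anti}—is bookkeeping once this covariance concentration is in place.
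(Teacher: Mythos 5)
Your proposal follows essentially the same approach as the paper, which states that the proof of this theorem ``follows by similar (but easier) arguments as given in the proof of Theorem \ref{Thm2.1-2021} and Theorem \ref{sigma-asy}''; your outline correctly identifies the key simplification (a single block index, hence a classical block-multiplier bootstrap) and reproduces the main steps: plugging in residuals with control via the Gaussian-maximum concentration bound, covariance concentration around the long-run covariance of $Y_i(u)$ via the lag-decomposition and Burkholder arguments, the Gaussian comparison theorem of \cite{chernozhukov2015comparison}, and the long-run-variance plug-in via Proposition \ref{prop-lrv} and the event $A_n$.
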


The proof of Theorem \ref{fixu-SCB}
follows by similar (but easier)  arguments as given in  the proof 
of   Theorem \ref{Thm2.1-2021} and Theorem  \ref{sigma-asy}.
\begin{remark} \label{assA1}
{\rm One can	prove similar results   under alternative moment assumptions.
In fact, Theorem \ref{single-SCB} remains valid  if condition  \eqref{2.7a}	is  replaced by  \begin{align}\label{3.8-new} \E \Big [ {\sup_{0\leq t\leq 1}(G(u,t,\FF_0)} )^4 \Big ]<\infty~~. 
\end{align}
Moreover, one can prove   Theorem \ref{single-SCB} under weaker assumptions than  Assumption  \ref{asserr} (ii), which requires geometrically decaying dependence measure. More precisely, If the assumptions of Theorem \ref{single-SCB} hold, 
where Assumption  \ref{asserr} (ii) is
replaced by assumption (a) in (ii) of Remark \ref{remrk3.1}  and the following conditions
\begin{description}
	\item (b1) There exist constants 
	$M=M(n)>0$, $\gamma=\gamma(n)\in (0,1)$   and $C_1> 0 $ such that 
	\begin{align*}
		(2\lceil nb_n\rceil )^{3/8}M^{-1/2}l_n^{-5/8}\geq C_1l_n
	\end{align*}
	where $l_n=\max(\log ({2\lceil nb_n \rceil}p/\gamma),1)$.
\end{description}
Recall the quantity $\Xi_M$ and $\Delta_{M,q} $ defined in Remark \ref{remrk3.1}.				Then we have  
\begin{align*}
	\mathfrak{P}_n(u) =O \Big(\eta_n  +\Theta \Big (\sqrt{nb_n}(b_n^4+\frac{1}{n}), p\Big )+\Theta \Big  (p^{\frac{1-q^*}{1+q^*}}, p \Big  )\Big)
\end{align*}
with
\begin{eqnarray*}
	&& \eta_n   =    (nb_n)^{-1 / 8} M^{1 / 2} l_{n}^{7 / 8}+\gamma+\left((nb_n)^{1 / 8} M^{-1 / 2} l_{n}^{-3 / 8}\right)^{q /(1+q)}\left(p \Delta_{M, q}^{q}\right)^{1 /(1+q)} \\ \nonumber
	&& \quad \quad  +\Xi_{M}^{1 / 3}\left(1 \vee \log \left(p / \Xi_{M}\right)\right)^{2 / 3}.
\end{eqnarray*}
By similar arguments   as given in Remark \ref{remrk3.1}, the sets ${\cal C}_n(u)$ and ${\cal C}^{\hat \sigma_u}_n(u)$ defined by  \eqref{nn01} and \eqref{nn02}, respectively,  
define an (asymptotic) $(1-\alpha)$  simultaneous confidence surface if $\eta_n=o(1)$. For example, if   $\delta_q(G,i)=O(i^{-1-\alpha})$
for some $\alpha>0$, $p=n^{\beta}$ for some $\beta>0$ and  $b_n=n^{-\gamma}$ for some  $0<\gamma<1$,  then $\eta_n=o(1)$ if $\beta-(1-\gamma)q\alpha/4<0$, which gives a lower bound on $q$.
}
\end{remark}


\subsection{Finite sample properties} \label{sec8}

In this section we provide numerical results for the confidence bands for 
the regression function $m$ with fixed $u$ or $t$
derived in Algorithms
\ref{algorithmt1} - \ref{algorithm0sigma}.
As in the main part of the paper we consider simulated and real data.

For the simultaneous confidence band for a fixed $t\in[0,1]$ in \eqref{2.1} and a fixed $u \in (0,1)$ in \eqref{2.2}, the tuning parameters are chosen 
in a similar way as described in Section \ref{sec51}. \WC{In particular for a fixed $u\in (0,1)$ 
use the bandwidth $b_n$ as the minmizer of the  loss 
function 
\begin{align} \label{det2a}
MGCV(b)=\max_{1\leq s\leq p}\frac{\sum_{i=\lceil nu-nb_n\rceil}^{\lfloor nu+nb_n\rfloor}(\hat m_b(\tfrac{i}{n},\tfrac{s}{p})-X_{i,n}(\tfrac{s}{p}))^2}{(1-{\rm tr} (Q_s(b,u))/(\lf nu+nb_n\rf-\lceil nu-nb_n\rceil+1))^2}.
\end{align}
and $Q_s(b,u)$ is the submatrix of $Q_s(b)$ defined in \eqref{det2} 
consisting of  $\lceil nu-nb_n\rceil:\lf nu+nb_n\rf_{th}$ rows and lines.}
The criterion \eqref{det2a} is also motivated by the generalized cross validation criterion introduced by \cite{craven1978smoothing}.

\subsection{Simulated data} \label{sec81}
For simulated data, the regression functions and locally stationary functional time series  are stated in Section \ref{sec52}. We begin  displaying  typical 
$95\%$ simultaneous confidence bands
obtained from one simulation run
for  model (a) with sample size  $n=800$. 
Figure \ref{simu-fixt-SCB} shows the simultaneous band of the type \eqref{2.1} with constant width (Algorithm \ref{algorithmt1})
and variable width 
(Algorithm \ref{algorithmt2}),
while in Figure  \ref{simu-fixu-SCB}   we display the
simultaneous confidence
bands   of the form \eqref{2.2} (for fixed $u$) 
with constant width (Algorithm \ref{algorithm0})
and variable width 
(Algorithm \ref{algorithm0sigma}).  We observe that in all cases there exist  differences between the bands with constant and variable width, but they are not substantial.\\

\begin{figure}[htbp]
\centering
\includegraphics[width=14cm, height=6cm]{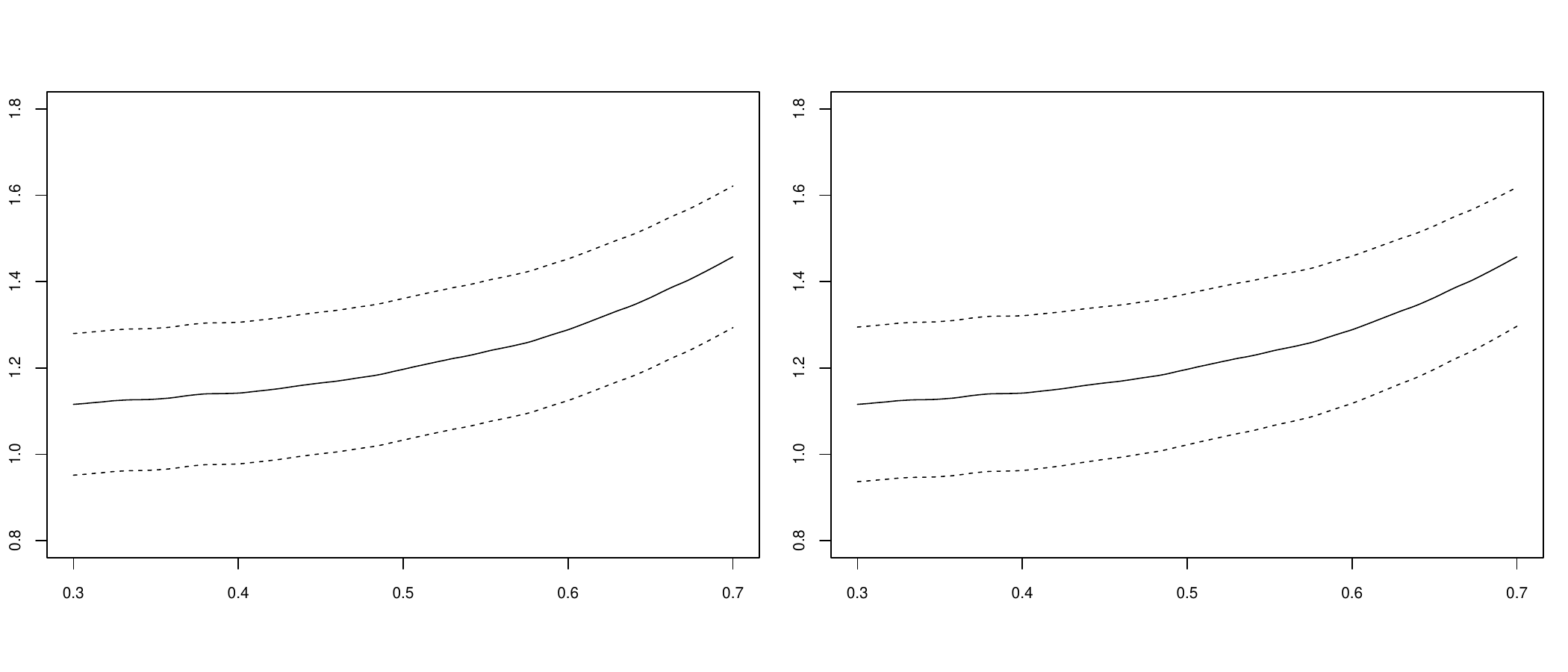}
\caption{\it $95\%$ simultaneous confidence bands of
the form \eqref{2.1} (fixed $t=0.5$) for the regression function in model (c) from  $n=800$
observations. Left panel: constant width (Algorithm \ref{algorithmt1}); Right panel:
varying width (Algorithm \ref{algorithmt2}).}
\label{simu-fixt-SCB}
\end{figure}

We next investigate the  coverage probabilities 
of  confidence bands constructed for fixed $t=0.5$ and $u=0.5$ for sample sizes $n=500$ and $n=800$.
All results presented in the following discussion are based on $1000$ simulation runs and $B=1000$ bootstrap replications. In all  tables  the left part shows the coverage probabilities of the bands with constant width while the results in the right part correspond to the bands with varying width.

In Table  \ref{tablet1} we give some results for the confidence bands of the form \eqref{2.1} (for fixed $t=0.5$)  with constant  and variable width 
(c.f. Algorithm {\ref{algorithmt1}}  and Algorithm {\ref{algorithmt2}}),
while we present in  Table \ref{tablef1}  the simulated coverage probabilities 
of the simultaneous confidence bands 
of the form \eqref{2.2}, where  $u=0.5$ is fixed
(c.f. Algorithm \ref{algorithm0} and Algorithm \ref{algorithm0sigma}).
We  observe that the simulated coverage probabilities are  close to their nominal levels in all cases under consideration, which illustrates the validity of our methods for finite sample sizes.
\begin{figure}[htbp]
\centering
\vspace{-0.5cm}
\includegraphics[width=14cm, height=7cm]{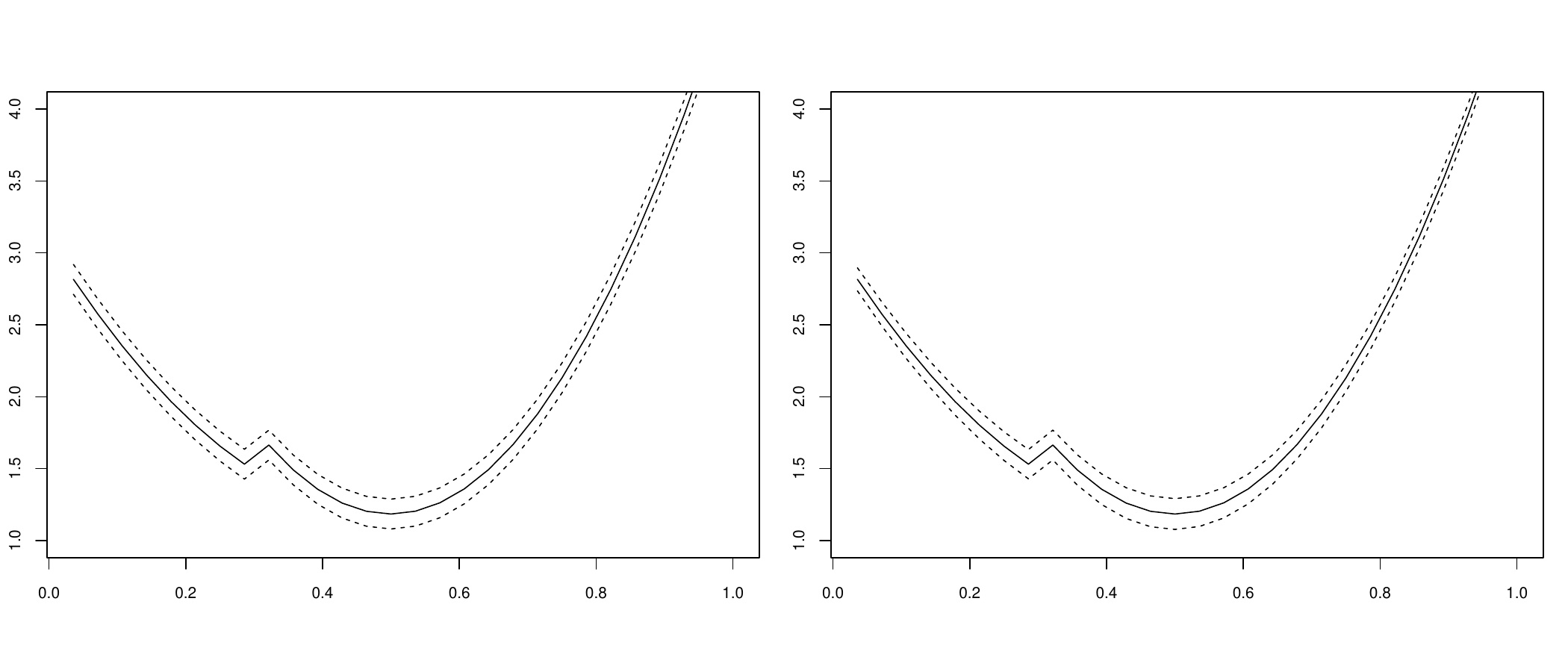}
\caption{\it $95\%$ simultaneous confidence band of
the form   \eqref{2.2} (fixed $u=0.5$) for the regression function in model (c) from  $n=800$
observations. Left panel: constant width (Algorithm   \ref{algorithm0}); Right panel:
varying width (Algorithm   \ref{algorithm0sigma}).}
\label{simu-fixu-SCB}
\end{figure}

\begin{table}[H]
\caption{\label{tablet1} \it Simulated coverage probabilities of the simultaneous confidence band
of the form   \eqref{2.1}  for   fixed $t=0.5$ calculated by Algorithm   \ref{algorithmt1}
(constant width) and   \ref{algorithmt2} (varying width). 
}
\centering
\begin{tabular}{lcccccccc}
\toprule
& \multicolumn{4}{c}{Constant Width} & \multicolumn{4}{c}{Varying Width} \\
\cmidrule(lr){2-5} \cmidrule(lr){6-9}
& \multicolumn{2}{c}{Model (a)} & \multicolumn{2}{c}{Model (b)} & \multicolumn{2}{c}{Model (a)} & \multicolumn{2}{c}{Model (b)} \\
\cmidrule(lr){2-3} \cmidrule(lr){4-5} \cmidrule(lr){6-7} \cmidrule(lr){8-9}
Level   & 90\% & 95\% & 90\% &  95\% & 90\% & 95\% & 90\% & 95\% \\
\cmidrule(lr){1-9}
n=500 & 90.3\% & 95.0\% & 91.7\% & 96.0\% & 91.2\% & 95.6\% & 91.3\% & 96.2\% \\
n=800 & 88.5\% & 95.4\% & 88.7\% & 94.4\% & 88.8\% & 94.5\% & 88.4\% & 94.0\% \\
\midrule
& \multicolumn{2}{c}{Model (c)} & \multicolumn{2}{c}{Model (d)} & \multicolumn{2}{c}{Model (c)} & \multicolumn{2}{c}{Model (d)} \\
\cmidrule(lr){2-3} \cmidrule(lr){4-5} \cmidrule(lr){6-7} \cmidrule(lr){8-9}
Level   & 90\% & 95\% & 90\% &  95\% & 90\% & 95\% & 90\% & 95\% \\
\cmidrule(lr){1-9}
n=500 & 91.7\% & 96.3\% & 91.4\% & 95.6\% & 91.5\% & 95.4\% & 90.4\% & 94.1\% \\
n=800 & 89.1\% & 94.8\% & 89.8\% & 94.5\% & 87.5\% & 93.4\% & 88.7\% & 94.4\% \\
\bottomrule
\end{tabular}
\end{table}

\begin{table}[H]
\caption{\label{tablef1} \it Simulated coverage probabilities of the simultaneous confidence band
of the form  \eqref{2.2}  for fixed $u=0.5$ calculated by  Algorithms  \ref{algorithm0} 
(constant width) and  \ref{algorithm0sigma} (varying width).}
\centering
\begin{tabular}{lcccccccc}
\toprule
& \multicolumn{4}{c}{Constant Width} & \multicolumn{4}{c}{Varying Width} \\
\cmidrule(lr){2-5} \cmidrule(lr){6-9}
& \multicolumn{2}{c}{Model (a)} & \multicolumn{2}{c}{Model (b)} & \multicolumn{2}{c}{Model (a)} & \multicolumn{2}{c}{Model (b)} \\
\cmidrule(lr){2-3} \cmidrule(lr){4-5} \cmidrule(lr){6-7} \cmidrule(lr){8-9}
Level   & 90\% & 95\% & 90\% &  95\% & 90\% & 95\% & 90\% & 95\% \\
\cmidrule(lr){1-9}
n=500 & 87.0\% & 93.4\% & 88.4\% & 93.5\% & 86.9\% & 92.2\% & 88.7\% & 93.7\% \\
n=800 & 88.7\% & 93.7\% & 88.4\% & 94.7\% & 89.4\% & 94.4\% & 88.9\% & 94.1\% \\
\midrule
& \multicolumn{2}{c}{Model (c)} & \multicolumn{2}{c}{Model (d)} & \multicolumn{2}{c}{Model (c)} & \multicolumn{2}{c}{Model (d)} \\
\cmidrule(lr){2-3} \cmidrule(lr){4-5} \cmidrule(lr){6-7} \cmidrule(lr){8-9}
Level   & 90\% & 95\% & 90\% &  95\% & 90\% & 95\% & 90\% & 95\% \\
\cmidrule(lr){1-9}
n=500 & 86.6\% & 92.3\% & 90.2\% & 94.0\% & 90.2\% & 94.5\% & 89.5\% & 94.2\% \\
n=800 & 89.6\% & 94.7\% & 87.8\% & 93.3\% & 88.9\% & 93.4\% & 89.8\% & 94.1\% \\
\bottomrule
\end{tabular}
\end{table}

\subsection{Real data}
\label{Section-Real Data}
In this section we further study the  well documented volatility smile for implied volatility of the European call option of SP500 data set considered in Section \ref{sec3} of the main article. In Figure \ref{Vol-fixt-SCB} of we display  $95\%$  simultaneous confidence bands of the form \eqref{2.1} for fixed $t=0.5$ (which corresponds to  Moneyness=$1.1$)
where the parameters are chosen as  $b_n=0.12$ and $m_n=18$. 
We observe that the implied volatility  changes with time (or precisely the time to maturity) when moneyness (or equivalently, the strike price and underlying asset price) is specified.
We also calculate confidence bands of the form \eqref{2.2}
for fixed $u=0.5$, by  
Algorithm \ref{algorithm0}  (constant width) and Algorithm \ref{algorithm0sigma} (varying width).
The parameter selection procedure yields  $b_n=0.1$ and $m_n=32$, and  the resulting simultaneous confidence bands of the form \eqref{2.2} are presented in Figure \ref{Vol-fixu-SCB}.
We observe  that both 95\% simultaneous confidence bands indicate that  the implied volatility is a quadratic function of moneyness, which supports the well documented phenomenon of 'volatility smile’.
We observe that the differences between the bands with constant and variable width are rather small.

\begin{figure}[htbp]
\centering
\includegraphics[width=14cm, height=6cm]{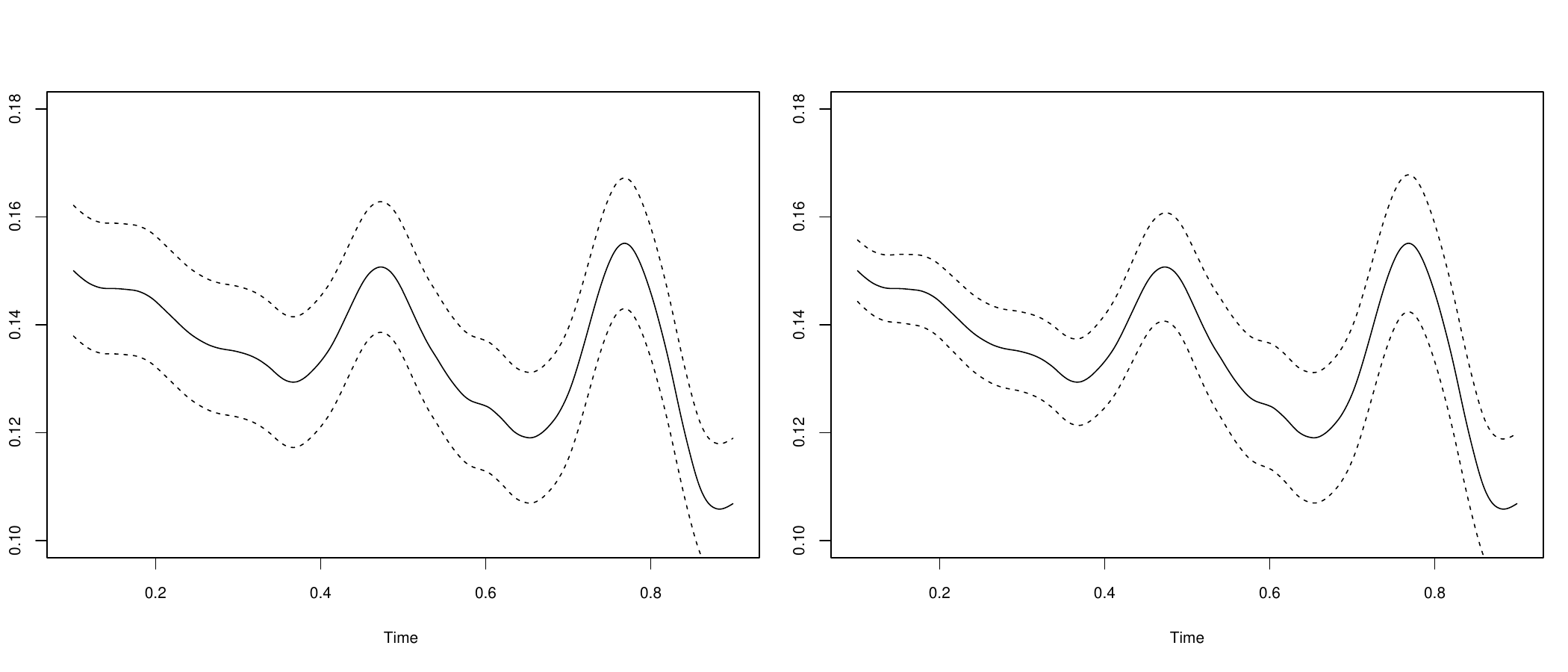}
\vspace{-.4cm}
\caption{\it  95\% simultaneous confidence bands  of the form \eqref{2.1} (fixed $t=0.5$)
for the data example in Section \ref{sec3}.
Left panel: constant width
(Algorithm \ref{algorithmt1}); Right panel: variable width (Algorithm  \ref{algorithmt2}).}
\label{Vol-fixt-SCB}
\end{figure}

\begin{figure}[htbp]
\centering
\includegraphics[width=14cm, height=6cm]{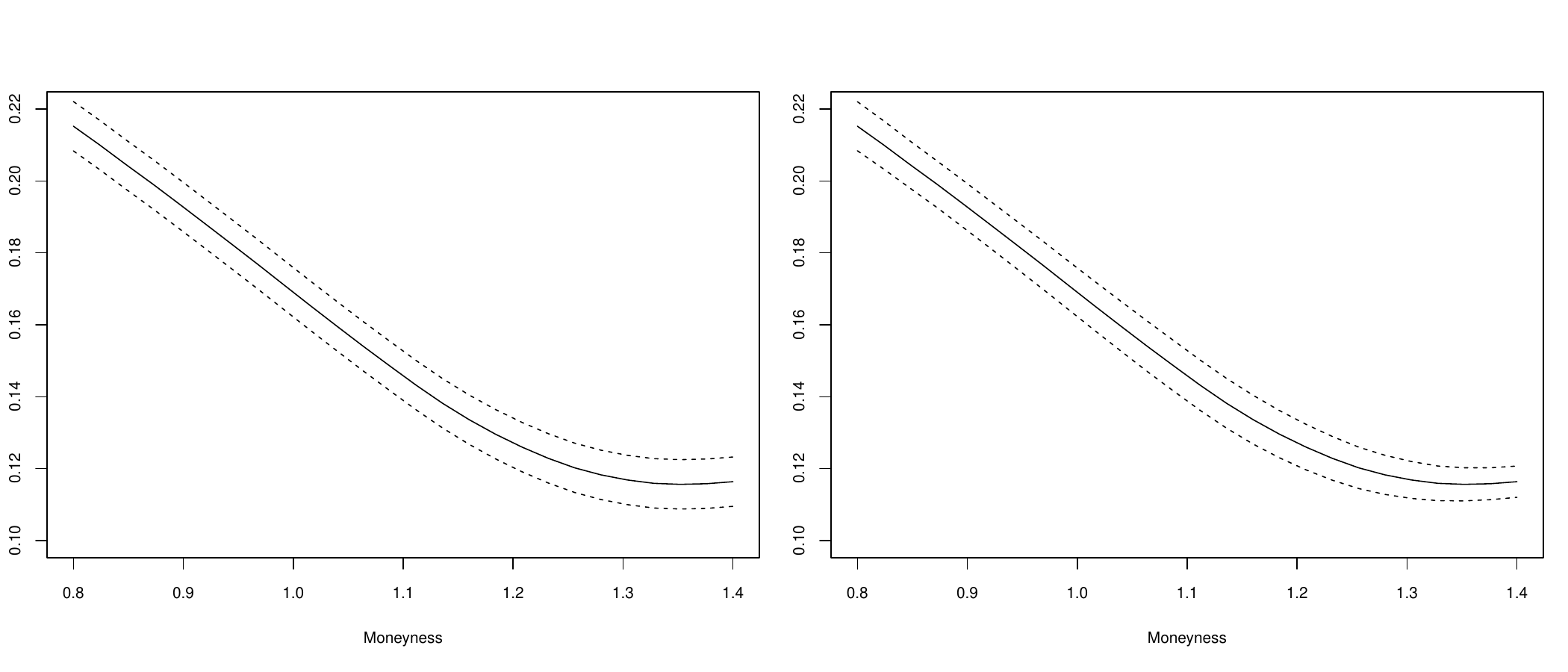}
\vspace{-.4cm}
\caption{\it  95\% simultaneous confidence bands  of the form   \eqref{2.2} (fixed $u=0.5$) for the  IV surface. Left panel: constant width
(Algorithm   \ref{algorithm0}); Right panel: variable width (Algorithm    \ref{algorithm0sigma}).}
\label{Vol-fixu-SCB}
\end{figure}


\setcounter{equation}{0}
\section{Examples of locally stationary error processes} \label{secex}

In  this section we present several examples for the error processes, which satisfy the assumptions  of the main article.

\begin{example}\label{example1.1}
{\rm 
Let  $(B_j)_{j \ge 0} $  denote a basis of $L^2\big([0,1]^2\big)$ and  let  $(\eta_{i,j})_{i \geq 0, j \geq 0}$ denote
an array of independent 
identically distributed centered random variables with variance $\sigma^2$. We
define the error process 
$$\epsilon_{i}(u,v)=\sum_{j=0}^{\infty} \eta_{i, j} B_{j}(u,v),$$
assume that
\begin{equation*} 
\sup_{u\in[0,1]}\int_0^1\E(\epsilon_i^2(u,v))dv = \sigma^2\sup_{u\in [0,1]}\sum_{s=0}^\infty \int B_s^2(u,v)dv<\infty  .
\end{equation*}
Next, consider 
the  locally stationary MA($\infty$) functional linear model
\begin{align} \label{2.12}
\varepsilon_{i,n} (t) 
=\sum_{j=0}^{\infty} \int_{0}^{1} a_{j}(t, v) \epsilon_{i-j}(\tfrac{i}{n},v) d v ~, 
\end{align}
where  	$( a_{j})_{j \geq 0}$
is a sequence of square integrable functions  $a_{j}:[0,1]^{2} \rightarrow \mathbb{R}$ satisfying 
$$
\sum_{j=0}^{\infty} \sup _{u, v \in[0,1]}\left|a_{j}(u, v)\right|<\infty.
$$ 
Define 
$\FF_i=(\ldots , \eta_{i-1},\eta_i)$, then we obtain from \eqref{2.12} the representation of the form
$\varepsilon_{i,n} (t)  =	G(\frac{i}{n},t,\FF_i) $, where 
$$G(u,t,\FF_i)  =\sum_{j=0}^{\infty} \int_{0}^{1} a_{j}(t, v) \sum_{s=0}^{\infty} \eta_{i-j, s} B_{s}(u,v) d v.
$$
Further, assume that $\|\eta_{1,1}\|_q<\infty$ for some $q>2$, then by Burkholder's and Cauchy's inequality the physical dependence measure defined in \eqref{2.7}  satisfies 
\begin{align*}
\delta_q(G,i) &= \sup_{u,t \in [0,1] }
\Big \|\sum_{s=0}^\infty\int_0^1a_i(t,v)B_s(u,v)dv(\eta_{0,s}-\eta'_{0,s})\Big \|_q \\
& =O \Big(  \sup_{u,t \in [0,1]}
\Big (\sum_{s=0}^\infty  \Big (\int_0^1a_i(t,v)B_s(u,v)dv  \Big )^2\Big ) ^{1/2}\Big )  \\
& =O \Big  (\sup_{t\in [0,1]}\Big  [\int_0^1a^2_i(t,v)dv \Big ]^{1/2}\Big )~.
\end{align*}
Therefore  Assumption \ref{asserr}(2) will be satisfied if 	
\begin{align*}
\sup_{t\in [0,1]}\Big[\int_0^1a^2_i(t,v)dv\Big]^{1/2}=O(\chi^i)~.
\end{align*}
Similarly, it  follows for $q\geq 2$ that 
\begin{align}\label{Quantifynorm}
\|G(u,t,\FF_0)\|_q^2 &\leq Mq\sum_{j=0}^\infty \sum_{s=0}^\infty (\int_0^1 a_j(t,v)B_s(u,v)dv)^2\|\eta_{1,1}\|_q^2\notag\\
& \leq Mq\sum_{j=0}^\infty\int_{0}^1a_j^2(t,v)dv\sum_{s=0}^\infty \int_0^1 B_s^2(u,v)dv \|\eta_{1,1}\|_q^2
\end{align}
for some sufficiently large constant $M$. Consequently,
the filter $G$ has finite   moment of order $q$, if  
\begin{align}\label{ajsquare}
\sum_{j=0}^\infty\int_{0}^1a_j^2(t,v)dv<\infty~.
\end{align}
Furthermore, if there exists  positive constants $M_0$ and $\alpha$ such that $\|\eta_{1,1}\|_q\leq M_0q^{1/2-\alpha}$, Assumption \ref{asserr}(1)  is also satisfied, because 
for any fixed $t_0$, the sequence \
\begin{align*}
\frac{t_0^q\|G(u,t,\FF_0)\|_q^q}{q!}=O\Big(\frac{C^qt_0^qq^{q-\alpha q}}{q!}\Big)=O\Big(\frac{1}{\sqrt{2\pi q}}(\frac{Ct_0e}{q^\alpha})^q\Big)
\end{align*}
is summable, where 
$$
C=\sup_{t\in[0,1],u\in[0,1]}M_0\sqrt{M\sum_{j=0}^\infty\int_0^1a_j^2(t,v)dv\sum_{s=0}^\infty\int_0^1B_s^2(u,v)dv}.
$$
Moreover,  if $b_s(u,v):=\frac{\partial}{\partial u}B_s(u,v)$ exists for $u\in (0,1), v\in [0,1]$, then  it follows observing \eqref{Quantifynorm}  that 
Assumption \ref{asserr}(3)  holds   under \eqref{ajsquare} and
$$\sup_{u\in [0,1]}\sum_{s=0}^\infty \int b_s^2(u,v)dv<\infty.
$$
Finally, if  $\|\eta_{1,1}\|_{q^*}<\infty$ and  
\begin{align*}
\sup_{t\in [0,1]}\Big[\int_0^1\Big(\frac{\partial}{\partial t}a_i(t,v)\Big)^2dv\Big]^{1/2}=O(\chi^i)~,
\end{align*}
it can be shown by similar arguments as given above that 
Assumption \ref{asssim} is satisfied.
}
\end{example}

\begin{example}\label{Basis-Function}
{\rm 
For  a given orthonormal basis  $(\phi_k(t))_{k \geq 1} $ of $L^2([0,1])$  consider the   functional time series  $(G(u,t,\FF_i))_{i \in \Z}$   defined by  
\begin{align}\label{KL}
G(u,t,\FF_i)=\sum_{k=1}^\infty H_k(u,\FF_{i})\phi_k(t)~,
\end{align} 
where   for each $k \in \N$ and $u \in [0,1]$  the random coefficients $(H_k(u,\FF_{i}))_{i \in \Z}$ are  
stationary time series.
A parsimonious choice of \eqref{KL} is to consider $\FF_i=\cup_{k=1}^\infty \FF_{i,k}$ where  $\{\FF_{i,k}\}_{k=1}^\infty$ are independent filtrations.
In this case we obtain 
\begin{align}\label{KL-Simple}
G(u,t,\FF_i)=\sum_{k=1}^\infty H_k(u,\FF_{i,k})\phi_k(t),
\end{align}
and  the random coefficients  $H_k(u,\FF_{i,k})$ are
stochastically independent. 
A  sufficient condition for Assumption \ref{asserr}(2) in  model \eqref{KL-Simple} is 
$$
\sup_{t \in [0,1]}  \sum_{k=0}^\infty|\phi_k(t)|\delta_{q}(H_k,i)=O(\chi^i)~,
$$ 
where $\delta_{q}(H_k,i):=\sup_{u\in [0,1]}\|H_k(u,\FF_{i,k})-H_k(u,\FF^*_{i,k})\|_q$. The $q${th} moment of the process  $G$ in \eqref{KL-Simple} exists for  $q\geq 2$, if 
$$\Delta_q:=\sup_{t\in [0,1],u\in[0,1]}\sum_{k=0}^\infty \phi_k^2(t)\|H_k(u,\FF_{0,k})\|^2_q<\infty.
$$
If further
$\Delta_q=O(q^{1/2-\alpha})$ for some $\alpha>0$, then similar arguments as given in  Example \ref{example1.1} show  that   Assumption \ref{asserr}(1) is satisfied as well. Finally,
if the inequality 
$$
\sum_{k=0}^\infty \phi_k^2(t)\Big \|\frac{\partial}{\partial u}H_k(u,\FF_{0,k}) \Big \|^2_q<\infty ~
$$
holds uniformly with respect to  $t, u\in [0,1]$, Assumption  \ref{asserr}(3)  is also satisfied. 
\\
On the other hand, in  model \eqref{KL} we have  $
H_k(u,\FF_i)=\int_0^1 G(u,t,\FF_i)\phi_k(t)dt$, and consequently 
the magnitude of $\|H_k\|_q$ and $\delta_q(H_k,i)$ can be determined by  Assumption \ref{asserr}. 
For example, if the basis of $L^2([0,1])$ is given by $\phi_k(t)=\cos(k\pi t)$ ${(k=0,1,\ldots })$ 
and the inequality 
$$
\left\|G(u,0,\FF_1)\right\|_{q}+\Big\|\frac{\partial}{\partial t}G(u,0,\FF_1)\Big\|_{q}+\sup _{t \in[0,1]}\Big\|\frac{\partial^2}{\partial t^2}G(u,t,\FF_1)\Big\|_{q}<\infty,
$$
holds for  $u\in[0,1]$, it follows by similar arguments as given in \cite{zhou2020statistical} that 
\begin{align}\label{PropertyH}
\sup_{u\in[0,1]}\big \|H_k(u,\FF_{k})\big \|_q=O(k^{-2}),~~ \delta_q(H_k,i)=O\big(\min(k^{-2},\delta_G(i,q))\big).
\end{align}
Similarly, assume that the basis  of $L^2([0,1])$ is given by the Legendre polynomials  and that 
\begin{align*}
\sup_{u\in [0,1]}\max_{s=1,2,3}\Big \|\int_{-1}^1 \frac{|\frac{\partial^s}{\partial t^s}G(u,t,\FF_0)|}{\sqrt{1-x^2}}dx\Big \|_q<\infty.
\end{align*}
If additionally for every  $\varepsilon > 0 $, there exists a constant  $\delta >0 $ 
such that 
$$
\sum_{s=1,2}\sum_k\Big\|\frac{\partial^s}{\partial t^s}G\left(u,x_{k},\FF_i\right)-\frac{\partial^s}{\partial t^s}G\left(u,x_{k-1},\FF_i)\right)\Big\|_q<\varepsilon
$$
for any finite sequence of pairwise disjoint sub-intervals  $(x_{k-1}, x_k) $ of the interval $(0,1)$
such that
$ \sum_{k}\left(x_{k}-x_{k-1}\right)<\delta $,  
it follows from  Theorem 2.1 of \cite{wang2012convergence} that  \eqref{PropertyH} holds as well.

Finally, if  
$$\sup_{t\in [0,1]} \sum_{k=0}^\infty|\phi'_k(t)|\delta_{q^*}(H_k,i)=O(\chi^i),$$ 
it can be shown by similar arguments as given above that 
Assumption \ref{asssim} is also  satisfied.
}		
\end{example}

\setcounter{equation}{0}
\section{Proofs of Theorems} \label{proof-main} 
In the proofs, for two real sequence $a_n$ and $b_n$ we write $a_n\lesssim b_n$, if there exists a universal positive constant $M$ such that $a_n\leq Mb_n$. Let $\mathbf 1(\cdot)$ be the usual indicator function. For simplicity let $\tilde K(u)=\frac{1}{nb_n}\sum_{i=1}^n K\big(\frac{i/n-u}{b_n}\big)$.
\subsection{Proof of Theorem \ref{SCB}}
\label{sec713}

For $ p \in \N$  define by $t_v =\frac{v}{p}$, ($v=0,\ldots ,p$) an equidistant partition of the interval $[0,1]$ and  let $M$ be a sufficiently large generic constant which may vary from line to line. 
Define
\begin{equation}
\label{3.3}
W_n(u,t)=\sqrt{nb_n} \big (\hat m(u,t)-\E(\hat m(u,t)) \big )
=\frac{1}{\sqrt {nb_n}\tilde K(u)}\sum_{i=1}^n G(\tfrac{i}{n},t,\FF_i)K\Big(\frac{\tfrac{i}{n}-u}{b_n}\Big),
\end{equation}
we have by triangle inequality
\begin{align}
\Big 
|\sup_{b_n\leq u\leq 1-b_n,0\leq t\leq 1}|W_n(u,t)|-
\max_{{\substack{\lceil nb_n\rceil \leq l_1\leq n-\lc nb_n\rc\\1\leq s\leq p}}} |W_n(\tfrac{l_1}{n},\tfrac{s}{p})| \Big  |\notag 
\leq \tilde W_n~,
\end{align}
where  
\begin{align*}
\tilde W_n 
\:= \max_{\substack{\lceil nb_n\rceil \leq l_1\leq n-\lc nb_n\rc,1\leq s\leq p,\\
|u-\tfrac{l_1}{n}|\leq 1/n, |t-\tfrac{s}{p}|\leq 1/p,u,t\in[0,1]}}|W_n(u,t)-W_n(\tfrac{l_1}{n},\tfrac{s}{p})|.
\end{align*}
By Assumption \ref{asssim}, Burkholder's inequality
and similar arguments as  given in the proof of Proposition 1.1 of \cite{dette2020prediction} we obtain
\begin{equation}\label{partialnorm}
\begin{split} 
& \sup_{u,t\in [0,1]}\Big\|\frac{\partial}{\partial u}W_n(u,t)\Big\|_{q^*}\leq {M\over  b_n} ,~\sup_{u,t\in [0,1]}\Big\|\frac{\partial}{\partial t}W_n(u,t)\Big\|_{q^*}\leq M, \\
& \sup_{u,t\in [0,1]}\Big\|\frac{\partial^2}{\partial u\partial t}W_n(u,t)\Big\|_{q^*}\leq {M\over  b_n}.
\end{split} 
\end{equation}
Note that  we have  for $\tau_s>0$, $s=1,2$ and $x,y\in [0,1)$,
\begin{align*} 
&\left\|\sup_{\substack{0\leq t_1\leq \tau_{1}\\0\leq t_2\leq \tau_{2}}} |W_n(t_1+x, t_2+y)-W_n(x,y)|\right\|_{q*} \leq \int_{0}^{\tau_{1}}\left\|\frac{\partial}{\partial u} W_{n}(x+u, y)\right\|_{q^*} d u\\&+\int_{0}^{\tau_{2}}\left\|\frac{\partial}{\partial t} W_{n}(x, y+v)\right\|_{q^*} d v\notag+ 
\int_{0}^{\tau_{1}} \int_{0}^{\tau_{2}}\left\|\frac{\partial^{2}}{\partial x \partial t} W_{n}(x+u, y+v)\right\|_{q^*} d u d v.
\end{align*}
Therefore,  \eqref{partialnorm}  and similar arguments as in  the proof of Proposition B.2  
of \cite{dette2019change} show 
\begin{align}\label{partialnorm2}
\|\tilde W_n\|_{q^*}
=O((np)^{1/q^*}((nb_n)^{-1}+1/p)).
\end{align}
Observing \eqref{eq25-2021-2-2} and \eqref{2023-10-3}.\ 
Lemma \ref{anti}  and \eqref{partialnorm2} it therefore follows
that  
\begin{align*}
\mathfrak{P}_n
& \lesssim (nb_n)^{-(1-11\iota)/8}+\Theta\Big(\sqrt{nb_n}\Big(b_n^4+\frac{1}{n}\Big),n p\Big)+\Theta(\delta, n p)+\p(\tilde W_n>\delta)\notag\\
& \lesssim (nb_n)^{-(1-11\iota)/8}+\Theta\Big(\sqrt{nb_n}\Big(b_n^4+\frac{1}{n}\Big),n p\Big)+\Theta(\delta, n p) \\
&  ~~~+\left((np)^{1/q^*}((nb_n)^{-1}+1/p)/\delta\right)^{q^*}.
\nonumber 
\end{align*}
Solving $\delta=\left((np)^{1/q^*}((nb_n)^{-1}+1/p)/\delta\right)^{q^*}$ we get $\delta=\big((np)^{1/q^*}((nb_n)^{-1}+1/p)\big)^{\frac{q^*}{q^*+1}}$  and the assertion of the theorem follows.  \hfill $\Box$
\subsection{Proof of Theorem \ref{Thm2.1-2021}}
\begin{proof}In the following discussion we use  the following notation. For any vector $y_n$ indexed by $n$, let $y_{n,r}$ be its $r_{th}$ component. For example,  $\hat S_{rm_n, j}$ is the $j_{th}$ entry of the vector $\hat S_{rm_n}$.

Let $T_k$  denote the  statistic  generated  by \eqref{3.13} 
in one bootstrap iteration of Algorithm \ref{algorithm1} and define for 
integers $a,b$ the quantities 
\begin{align*}
T_{ap+b}^{\diamond}& =\sum_{j=1}^{2\lceil nb_n \rceil-m_n'}\hat S_{jm_n',(a-1)p+b}{R_{k+j-1}},a=1,...n-2\lceil nb_n\rceil+1,1\leq b\leq p
\\
T^{\diamond} &  := ((T_1^{\diamond})^\top  , \ldots , (T_{(n-2\lceil nb_n\rceil+1 )p}^{\diamond})^\top)^\top 
= \big ({T_1}^\top ,\ldots , {T_{  n-2\lceil nb_n\rceil+1}}^\top \big )^\top  \\
T & =
|  T^{\diamond}|_{\infty} =
\max_{1\leq k\leq n-2\lceil nb_n\rceil+1} |T_k|_\infty
\end{align*}
It suffices to show that   the following inequality holds
\begin{align}
\sup_{x\in \mathbb R}\Big|\p(|T^\diamond/\sqrt{ 2\lceil nb_n\rceil-m'_n}|_\infty \leq x|\FF_n)-\p\Big(\frac{1}{\sqrt{2nb_n}}\Big|\sum_{i=1}^{2\lc nb_n\rc-1}\tilde Y_i\Big|_\infty\leq x\Big)\Big|\notag\\=O_p\Big(\vartheta^{1/3}_n\{1\vee \log (\frac{np}{\vartheta_n})\}^{2/3}+\Theta \Big(\Big(\sqrt{m_n\log np}\Big(\frac{1}{\sqrt{nb_n}}+b_n^3\Big) (np)^{\frac{1}{q}}\Big)^{q/(q+1)},np\Big)\Big).\label{3.41-2021}
\end{align}
If this estimate has been established,  Theorem \ref{Thm2.1-2021}  follows from  Theorem \ref{SCB}, which shows 
that the probabilities $\p \big ( 
\max_{b_n \leq u\leq 1-b_n,0\leq t\leq 1}\sqrt{nb_n}|\hat \Delta(u,t)|\leq x \big )$
can be approximated by the probabilities 
$$\p \big (\frac{1}{\sqrt{nb_n}}|\sum_{i=1}^{2\lc nb_n\rc-1}\tilde Y_i|_\infty\leq x \big )$$ 
uniformly with respect to $x \in \R$.\\


For a proof of \eqref{3.41-2021} we assume without loss of generality that $m_n$ is even so that $m_n'=m_n$.   For convenience, let $\sum_{i=a}^bZ_i=0$ if the indices $a$ and $b$ satisfy $a>b$. 
Given the data,  it follows for the conditional covariance 
\begin{align}\label{lag-Tdiamond}
&((2\lceil nb_n\rceil-1)-m_n+1)\sigma^{T^\diamond}_{(k_1-1)p+j_1,(k_2-1)p+j_2}:=\E(T^\diamond_{(k_1-1)p+j_1}T^\diamond_{(k_2-1)p+j_2}|\FF_n)\\&=\E\Big(\sum_{r=1}^{2\lceil nb_n \rceil-m_n}\hat S_{rm_n, (k_1-1)p+j_1}{R_{k_1+r-1}}\sum_{r=1}^{2\lceil nb_n \rceil-m_n}\hat S_{rm_n, (k_2-1)p+j_2}{R_{k_2+r-1}}\Big|\FF_n\Big)\notag\\
&=\sum_{r=1}^{2\lceil nb_n \rceil-m_n-(k_2-k_1)}\hat S_{(r+k_2-k_1)m_n, (k_1-1)p+j_1}\hat S_{rm_n, (k_2-1)p+j_2}.\notag
\end{align} 
where $1\leq k_1\leq k_2\leq  (n-2\lceil nb_n\rceil+1)$, $1\leq j_1,j_2\leq p$. Here,  without generality, we assume $k_1\leq k_2$. 
Define $\tilde T^\diamond$, and $\tilde S_{jm_n}$  in the same way as   $T^\diamond$,   and $\hat S_{jm_n}$ in  \eqref{3.13} and  \eqref{hol10}, respectively,
where the residuals $\hat {\tilde Z}_i$ defined in \eqref{3.11} and used in step (a) of Algorithm \ref{algorithm1}
have been replaced by  quantities $\tilde Z_i$ defined in \eqref{tildeZi}.Then we obtain by similar arguments
\begin{align}\label{eq64}
((2\lceil nb_n\rceil-1)-m_n+1)\sigma^{\tilde T^\diamond}_{(k_1-1)p+j_1,(k_2-1)p+j_2}:=\E(\tilde T^\diamond_{(k_1-1)p+j_1}\tilde T^\diamond_{(k_2-1)p+j_2}|\FF_n)\notag\\
=\sum_{r=1}^{\lceil 2nb_n \rceil-m_n-(k_2-k_1)}\tilde S_{(r+k_2-k_1)m_n, (k_1-1)p+j_1}\tilde S_{rm_n, (k_2-1)p+j_2}.
\end{align} 
Recall the definition of the random variable   $\tilde Y_j$ in Proposition \ref{Grid-Result} and denote by  $\tilde Z_{j,i}$, $\tilde Y_{j,i}$ the $i${th} component of the vectors $\tilde Z_j$ and $\tilde Y_j$, respectively ($1\leq i \leq  (n-2\lceil nb_n \rceil+1)p $, $1\leq j\leq 2\lceil nb_n\rceil-1$). Then we obtain
\begin{align}\label{eq65}
&\sigma^{\tilde Y}_{(k_1-1)p+j_1,(k_2-1)p+j_2}:=\E\Big(\frac{1}{2\lceil nb_n\rceil-1}\sum_{i_1=1}^{2\lceil nb_n\rceil-1}\tilde Y_{i_1,(k_1-1)p+j_1}\sum_{i_2=1}^{2\lceil nb_n\rceil-1}\tilde Y_{i_2,(k_2-1)p+j_2}\Big)\notag
\\&=\frac{\E(\sum_{i_1=1}^{2\lceil nb_n\rceil-1}\tilde Z_{i_1,(k_1-1)p+j_1}\sum_{i_2=1}^{2\lceil nb_n\rceil-1}\tilde Z_{i_2,(k_2-1)p+j_2})}{2\lceil nb_n\rceil-1}\notag\\
&=\frac{\E(\sum_{i_1=1}^{2\lceil nb_n\rceil-1} Z_{i_1+(k_1-1),\lceil nb_n\rceil+(k_1-1),j_1}\sum_{i_2=1}^{2\lceil nb_n\rceil-1}Z_{i_2+(k_2-1),\lceil nb_n\rceil+(k_2-1),j_2})}{2\lceil nb_n\rceil-1},
\end{align}
where  $Z_{i_1+(k_1-1),\lceil nb_n\rceil,j_1}$ is the $j_1${th} entry of the $p-$dimensional random vector  $Z_{i_1+(k_1-1),\lceil nb_n\rceil}$ and $Z_{i_2+(k_2-1),\lceil nb_n\rceil,j_2}$ is defined similarly.
We will show at the end of this section that 
\begin{align}
\Big \|\max_{k_1,k_2,j_1,j_2}|\sigma^{\tilde Y}_{(k_1-1)p+j_1,(k_2-1)p+j_2}-\sigma^{\tilde T^\diamond}_{(k_1-1)p+j_1,(k_2-1)p+j_2}\Big \|_{q/2}
=O(\vartheta_n).\label{eq67-2021}
\end{align}
If  \eqref{eq67-2021} holds, it  follows from Lemma \ref{Lemma1.3} 
that there exists a constant $\eta_0>0$ such that
\begin{align*}
\p\Big(\min_{\substack{1\leq k\leq  (n-2\lceil nb_n\rceil+1),\\ 1\leq j\leq p}}\sigma^{\tilde T^\diamond}_{(k-1)p+j,(k-1)p+j} \geq \eta_0\Big)\geq 1-O(\vartheta_n^{q/2}).
\end{align*}
Then, by  Theorem 2 of \cite{chernozhukov2015comparison}, we have
\begin{align}\label{Tdiamondbound1}
\sup_{x\in \mathbb R}\Big  |\p \Big  (\frac{|\tilde T^\diamond |_\infty}{\sqrt{ 2\lceil nb_n\rceil-m_n}}  \leq x ~\Big |\FF_n \Big )-\p \Big (\frac{1}{\sqrt{2nb_n}}\Big |\sum_{i=1}^{2\lc nb_n\rc-1}\tilde Y_i\Big |_\infty\leq x\Big )\Big |\notag\\=O_p(\vartheta^{1/3}_n\{1\vee \log (\frac{np}{\vartheta_n})\}^{2/3}).
\end{align}
Since conditional on  $ \FF_n$, $\big (\tilde T^\diamond -T^\diamond \big )$
is an  $(n-2\lceil nb_n\rceil+1)p$ dimensional Gaussian random vector
we obtain by the (conditional) Jensen  inequality and  conditional inequality for the  concentration of the maximum of a Gaussian process \citep[see Chapter 5 in Appendix A of][where a similar result has been derived in Lemma A.1]{chatterjee2014superconcentration}
that
\begin{align}\label{diffTdiamond}
\E(|\tilde T^\diamond-T^\diamond|^q_\infty|\FF_n)&
\leq M|\sqrt{\log np}\max_{r=1}^{(n-2\lceil nb_n\rceil+1)p } \Big ( \sum_{j=1}^{2\lceil nb_n \rceil-m_n'}(\hat S_{jm'_n,r}- S_{jm'_n,r})^2\Big)^{1/2}|^q
\end{align}
for some large constant $M$ almost surely.
Observing that
\begin{align}\label{maxtrick}\max_{1\leq i\leq n}|Z_i|^l\leq \sum_{1\leq i\leq n}|Z_i|^l
~~~~~~~
\text{for any $l>0, n\in \mathbb N$}
\end{align}
and using 
a similar argument as given in the proof of Proposition 1.1 in \cite{dette2020prediction}
and the fact that $K_l$ and $K_r$ are both three order kernels,  we have
\begin{align*}
\frac{1}{\sqrt{ 2\lceil nb_n\rceil-m_n}}
\Big\|\max_{r=1}^{(n-2\lceil nb_n\rceil+1)p }\Big (\sum_{j=1}^{\lceil 2nb_n \rceil-m_n'}(\hat S_{jm'_n,r}- S_{jm'_n,r})^2\Big )^{1/2} \Big\|_{q}=O\Big(\sqrt{m_n}\Big(\frac{1}{\sqrt{nb_n}}+b_n^3\Big)(np)^{\frac{1}{q}}\Big), 
\end{align*}
and combining this result  with the (conditional version) of Lemma \ref{anti} 
and   \eqref{diffTdiamond} yields 
\begin{align}
\sup_{x\in \mathbb R} \Big |\p \Big ( 
\frac{|T^\diamond|_\infty}{\sqrt{ 2\lceil nb_n\rceil-m_n}}
> x~\Big |\FF_n \Big )-\p \Big (\frac{1}{\sqrt{2nb_n}}\Big |\sum_{i=1}^{2\lc nb_n\rc-1}\tilde Y_i\Big  |_\infty>x\Big )\Big |
\notag\\ 
\leq 
\sup_{x\in \mathbb R}\Big |\p \Big ( \frac{|\tilde T^\diamond|_\infty}{\sqrt{ 2\lceil nb_n\rceil-m_n}} > x~ \Big |\FF_n\Big )-\p \Big (\frac{1}{\sqrt{2nb_n}}\Big |\sum_{i=1}^{2\lc nb_n\rc-1}\tilde Y_i\Big |_\infty>x\Big )\Big | \notag\\
+\p \Big ( \frac{ |\tilde T^\diamond-T^\diamond|_\infty}{\sqrt{ 2\lceil nb_n\rceil-m_n}} >\delta\Big|\FF_n \Big  )+O\big(\Theta(\delta, np)\big)\notag\\
\leq
\sup_{x\in \mathbb R}|\p \Big ( \frac{ |\tilde T^\diamond|_\infty}{\sqrt{2\lceil nb_n\rceil-m_n}}  > x~\Big |\FF_n \Big )-\p \Big (\frac{1}{\sqrt{2nb_n}} \Big  |\sum_{i=1}^{2\lc nb_n\rc-1}\tilde Y_i\Big |_\infty>x\Big  )\Big  | \notag\\
+O_p\big(\delta^{-q}\big (\sqrt{m_n\log np}\big(\frac{1}{\sqrt{nb_n}}+b_n^3\big)(np)^{\frac{1}{q}}\big)^{q}\big)+O\big(\Theta(\delta, np)\big ),
\label{det22}
\end{align}
where we have used the Markov's inequality. Taking $\delta=\big(\sqrt{m_n\log np}(\frac{1}{\sqrt{nb_n}}+b_n^3)(np)^{\frac{1}{q}}\big)^{q/(q+1)}$ in \eqref{det22}, and combining this estimate with \eqref{Tdiamondbound1} yields \eqref{3.41-2021} completes the proof.

\par
{\bf Proof of  \eqref{eq67-2021}.}
To simplify the notation, write 
\begin{align*} 
G_{j,i,k}=G(\tfrac{i+k-1}{n},j/p,\FF_{i+k-1}),~~G_{j,i,k,u}=G(\tfrac{i+k-1+u}{n},j/p,\FF_{u})
\end{align*} 
Without loss of generality, we consider the case $k_1\leq k_2$. We  calculate $\sigma^{\tilde Y}_{(k_1-1)p+j_1,(k_2-1)p+j_2}$
observing the  representation 
\begin{align*} 
Z_{i_1+(k_1-1),\lceil nb_n\rceil+(k_1-1),j_1}=G_{j_1,i_1,k_1}K\Big (\tfrac{i_1-\lceil nb_n\rceil}{nb_n}\Big ).
\end{align*}
By  Lemma \ref{Lemma1.2}  it follows  that 
\begin{align}\label{Cov-Z}\E  \big [ Z_{i_1+(k_1-1),\lceil nb_n\rceil+(k_1-1),j_1}Z_{i_2+(k_2-1),\lceil nb_n\rceil+(k_2-1),j_2} \big ] =O(\chi^{|i_1-i_2+k_1-k_2|}).
\end{align}
uniformly for $1\leq i_1, i_2\leq 2\lceil nb_n \rceil-1$, $1\leq j_1,j_2\leq p$, $1\leq k_1, k_2\leq n-2\lceil nb_n \rceil+1$. We first show that \eqref{eq67-2021} holds whenever $k_2-k_1>2\lceil nb_n \rceil-m_n$.  On the one hand, observing and \eqref{lag-Tdiamond} and \eqref{eq64} that if $2\lceil nb_n \rceil-m_n-(k_2-k_1)<0 $ then \begin{align}\label{B-19-new}\sigma^{\tilde T^\diamond}_{(k_1-1)p+j_1,(k_2-1)p+j_2}=0~~ a.s.\end{align} 
Moreover, by \eqref{eq65} and \eqref{Cov-Z}, straightforward calculations show that
\begin{align}\label{B-20-new}
\sigma^{\tilde Y}_{(k_1-1)p+j_1,(k_2-1)p+j_2}=\frac{1}{2\lceil nb_n\rceil-1}O \Big (\sum_{i_1=1}^{2\lceil nb_n\rceil-1}\sum_{i_2=1}^{2\lceil nb_n\rceil-1}\chi^{|i_1-i_2+k_1-k_2|}\Big  )=O\Big (\frac{m_n}{nb_n} \Big ).
\end{align}
Combining  \eqref{B-19-new}, \eqref{B-20-new} and by applying similar argument to $k_1\geq k_2$, we obtain
\begin{align}\label{B-21-new}
\Big \|\max_{\substack{k_1,k_2,j_1,j_2\\|k_2-k_1|>2\lceil nb_n\rceil-m_n}}|\sigma^{\tilde Y}_{(k_1-1)p+j_1,(k_2-1)p+j_2}-\sigma^{\tilde T^\diamond}_{(k_1-1)p+j_1,(k_2-1)p+j_2}\Big \|_{q/2}
=O\Big(\frac{m_n}{nb_n}\Big).
\end{align}

Now consider the case that $k_2-k_1\leq 2\lceil nb_n \rceil-m_n$. Without losing generality we consider $k_1\leq k_2$. Again by \eqref{eq65}
\begin{align*}
&\E \Big (\sum_{i_1=1}^{k_2-k_1}Z_{i_1+(k_1-1),\lceil nb_n\rceil+(k_1-1),j_1}\sum_{i_2=1}^{2\lceil nb_n\rceil-1}Z_{i_2+(k_2-1),\lceil nb_n\rceil+(k_2-1),j_2} \Big )
\notag\\&=O \Big ( \sum_{i_1=1}^{k_2-k_1}\sum_{i_2=1}^{2\lceil nb_n\rceil-1}\chi^{|i_2-i_1+k_2-k_1|} \Big )=O \Big ( \sum_{i_1=1}^{k_2-k_1}\sum_{i_2=1}^{2\lceil nb_n\rceil-1}\chi^{i_2-i_1+k_2-k_1} \Big )=O(1),\\
&\E \Big (\sum_{i_1=1}^{2\lceil nb_n\rceil-1}Z_{i_1+(k_1-1),\lceil nb_n\rceil+(k_1-1),j_1}\sum_{i_2=2\lceil nb_n\rceil-(k_2-k_1)}^{2\lceil nb_n\rceil-1}Z_{i_2+(k_2-1),\lceil nb_n\rceil+(k_2-1),j_2} \Big ) 
\notag\\
&=O\Big ( \sum_{i_1=1}^{2\lceil nb_n\rceil-1}\sum_{i_2=2\lceil nb_n\rceil-(k_2-k_1)}^{2\lceil nb_n\rceil-1}\chi^{|i_2-i_1+k_2-k_1|}\Big )=O\Big ( \sum_{i_1=1}^{2\lceil nb_n\rceil-1}\sum_{i_2=2\lceil nb_n\rceil-(k_2-k_1)}^{2\lceil nb_n\rceil-1}\chi^{i_2-i_1+k_2-k_1}\Big )=O(1).
\end{align*}
Let $a=\lf M\log n\rf$ for a sufficiently large constant $M$.
Using \eqref{eq65}, it follows   (considering the lags up to $a$)
that   
\begin{align}
&\sigma^{\tilde Y}_{(k_1-1)p+j_1,(k_2-1)p+j_2}\notag\\&=\tfrac{1}{2\lceil nb_n\rceil -1}\E \Big (\sum_{i_1=k_2-k_1+1}^{2\lceil nb_n\rceil-1} Z_{i_1+(k_1-1),\lceil nb_n\rceil+(k_1-1),j_1}\sum_{i_2=1}^{2\lceil nb_n\rceil-(k_2-k_1)-1}Z_{i_2+(k_2-1),\lceil nb_n\rceil+(k_2-1),j_2} \Big )\notag\\
& +O((nb_n)^{-1})\notag\\
&  =\tfrac{1}{2\lceil nb_n\rceil -1}\E\Big(\sum_{i_1,i_2=1}^{2\lceil nb_n\rceil-(k_2-k_1)-1} G_{j_1,i_1,k_2}K(\tfrac{i_1+k_2-k_1-\lceil nb_n\rceil}{nb_n})
G_{j_2,i_2,k_2}K(\tfrac{i_2-\lceil nb_n\rceil}{nb_n})\Big)+O((nb_n)^{-1})\notag
\\
&=A + B +O(nb_n\chi^a+(nb_n)^{-1}),\label{eq71-2021}
\end{align}
where the terms $A$ and $ B$  are defined by
\begin{align}
\label{eq72-2021} A 
&:=\tfrac{1}{(2\lceil nb_n\rceil -1)}\sum_{i=1}^{2\lceil nb_n \rceil-(k_2-k_1)-1}A_i,
\\
\nonumber 
A_i &   = \E(G_{j_1,i,k_2,0}G_{j_2,i,k_2,0})
K(\tfrac{i+k_2-k_1-\lceil nb_n\rceil}{nb_n})K(\tfrac{i-\lceil nb_n\rceil}{nb_n})\\
B&=\tfrac{1}{(2\lceil nb_n\rceil -1)}\sum_{u=1}^a(B_{1,u}+B_{2,u}), 
\nonumber 
\\
B_{1,u}&=\sum_{i=1}^{2\lceil nb_n \rceil-(k_2-k_1)-1-u}B_{1,u,i},\label{eq74-2021} \\
B_{2,u}&=:\sum_{i=1}^{2\lceil nb_n \rceil-(k_2-k_1)-1-u}B_{2,u,i}.\label{eq75-2021}
\end{align}
and 
\begin{align*}
B_{1,u,i}&=
\E(G_{j_1,i,k_2,u}G_{j_1,i,k_2,0}) K(\tfrac{i+u+k_2-k_1-\lceil nb_n\rceil}{nb_n})K(\tfrac{i-\lceil nb_n\rceil}{nb_n}))
\\
B_{2,u,i}&=\E(G_{j_1,i,k_2,0}G_{j_2,i,k_2,u})
K(\tfrac{i+k_2-k_1-\lceil nb_n\rceil}{nb_n})K(\tfrac{i+u-\lceil nb_n\rceil}{nb_n})
\end{align*}

Therefore, by \eqref{eq71-2021}, we have that
\begin{align}\label{eq75}
\sigma^{\tilde Y}_{(k_1-1)p+j_1,(k_2-1)p+j_2}=\frac{1}{2\lceil nb_n\rceil -1}\Big(\sum_{i=1}^{2\lceil nb_n\rceil-1-(k_2-k_1)}A_i+\sum_{u=1}^a\sum_{i=1}^{2\lceil nb_n\rceil-1-(k_2-k_1)-u}(B_{1,u,i}+B_{2,u,i})\Big)\notag\\+O(nb_n\chi^a+(nb_n)^{-1}).
\end{align}

Now for  the term in \eqref{eq64} we have 
\begin{align*}
&m_n\tilde S_{(r+k_2-k_1)m_n, (k_1-1)p+j_1}\tilde S_{rm_n, (k_2-1)p+j_2}=\Big(\sum_{i=r+k_2-k_1}^{r+k_2-k_1+m_n/2-1}-\sum_{i=r+k_2-k_1+m_n/2}^{r+k_2-k_1+m_n}\Big)Z_{i+k_1-1, \lceil nb_n\rceil+k_1-1,j_1}\notag\\&\times\Big(\sum_{i=r}^{r+m_n/2-1}-\sum_{i=r+m_n/2}^{r+m_n}\Big)Z_{i+k_2-1, \lceil nb_n\rceil+k_2-1,j_2}\notag\\
=&\Big(\sum_{i=r}^{r+m_n/2-1}-\sum_{i=r+m_n/2}^{r+m_n}\Big)G_{j_1,i,k_2}K(\tfrac{i+k_2-k_1-\lceil nb_n\rceil}{nb_n})\times\Big(\sum_{i=r}^{r+m_n/2-1}-\sum_{i=r+m_n/2}^{r+m_n}\Big)G_{j_2,i,k_2}K(\tfrac{i-\lceil nb_n\rceil}{nb_n}).
\end{align*}

By Lemma \ref{Lemma1.2}, it follows that uniformly for $|k_2-k_1|\leq 2\lceil nb_n\rceil-m_n $ and $1\leq r\leq \lceil 2nb_n \rceil-m_n-(k_2-k_1)$, \begin{align}
&m_n\E\tilde S_{(r+k_2-k_1)m_n, (k_1-1)p+j_1}\tilde S_{rm_n, (k_2-1)p+j_2}\notag\\&=
\sum_{i=r}^{r+m_n}\E(G_{j_1,i,k_2}G_{j_2,i,k_2})K(\tfrac{i+k_2-k_1-\lceil nb_n\rceil}{nb_n})K(\tfrac{i-\lceil nb_n\rceil}{nb_n})\notag
\\&+\sum_{u=1}^a\Big(\sum_{i=r}^{r+m_n-u}\big(\E(G_{j_1,i,(k_2+u)}G_{j_2,i,k_2})K(\tfrac{i+u+k_2-k_1-\lceil nb_n\rceil}{nb_n})K(\tfrac{i-\lceil nb_n\rceil}{nb_n})\notag\\
&+\E(G_{j_2,i,(k_2+u)}G_{j_1,i,k_2})K(\tfrac{i+k_2-k_1-\lceil nb_n\rceil}{nb_n})K(\tfrac{i+u-\lceil nb_n\rceil}{nb_n})\big)
\Big)+O(m_n\chi^a+a^2),\label{eq71}
\end{align}
where the the term $m_n\chi^a$ corresponds to the error of omitting terms in the sum 
with a large index $a$,  and the term $a^2$ summarizes the error
due to ignoring  different signs in the product $\tilde S_{(r+k_2-k_1)m_n, (k_1-1)p+j_1}\tilde S_{rm_n, (k_2-1)p+j_2}$ (for each index $u$, we omit   $2u$).
Furthermore, by  Assumption \ref{asskern} and   \ref{asserr}(3)  it follows that uniformaly for $|u|\leq a$
\begin{align}
& \frac{1}{m_n}\sum_{i=r}^{r+m_n}\E(G_{j_1,i,k_2}G_{j_2,i,k_2})K(\tfrac{i+k_2-k_1-\lceil nb_n\rceil}{nb_n})K(\tfrac{i-\lceil nb_n\rceil}{nb_n})=A_r+O(\frac{m_n}{nb_n}),\label{eq81-2021}
\\
& \frac{1}{m_n}\sum_{i=r}^{r+m_n-u}\E(G_{j_1,i,(k_2+u)}G_{j_2,i,k_2})K(\tfrac{i+u+k_2-k_1-\lceil nb_n\rceil}{nb_n})K(\tfrac{i-\lceil nb_n\rceil}{nb_n})=B_{1,u,r}+O(\frac{m_n}{nb_n}+\frac{a}{m_n}),\label{eq82-2021}\\
& \frac{1}{m_n}\sum_{i=r}^{r+m_n-u}\E(G_{j_2,i,(k_2+u)}G_{j_1,i,k_2})K(\tfrac{i+k_2-k_1-\lceil nb_n\rceil}{nb_n})K(\tfrac{i+u-\lceil nb_n\rceil}{nb_n})=B_{2,u,r}+O(\frac{m_n}{nb_n}+\frac{a}{m_n})\label{eq83-2021}
\big),
\end{align}
where terms $A_r, B_{1,u,r}$ and $B_{2,u,r}$ are defined in equations \eqref{eq72-2021}, \eqref{eq74-2021} and \eqref{eq75-2021}, respectively. Notice that \eqref{eq64} and expressions 
\eqref{eq71}, \eqref{eq81-2021}, \eqref{eq82-2021} and \eqref{eq83-2021} yield that
\begin{align}\label{Esigmadia}
\E\sigma^{\tilde T^\diamond}_{(k_1-1)p+j_1,(k_2-1)p+j_2}=\frac{1}{2\lceil nb_n\rceil -m_n}\Big \{ \sum_{r=1}^{2\lceil nb_n\rceil-m_n-(k_2-k_1)}(A_r+O(\frac{m_n}{nb_n}))\notag\\+\sum_{u=1}^a\sum_{r=1}^{2\lceil nb_n\rceil-m_n-(k_2-k_1)}(B_{1,u,r}+B_{2,u,r}+O(\frac{m_n}{nb_n}+\frac{a}{m_n}))\Big \} 
+O \big (\chi^a+\frac{a^2}{m_n} \big ).
\end{align}
Lemma \ref{Lemma1.2} implies
$$
\max_{\substack{1\leq r\leq 2\lceil nb_n\rceil-(k_2-k_1)-1,\\1\leq k_1\leq k_2\leq  (n-2\lceil nb_n\rceil+1),s=1,2}}B_{s,u,r}=O(\chi^u), 
$$
which yields in combination with  equations
\eqref{eq75}, \eqref{Esigmadia} with $a=M\log n $ for a sufficiently large constant $M$,  and a similar argument applied to the case that $k_1\geq k_2$,
\begin{align}\label{eq87-2021}
\max_{\substack{1\leq k_1,k_2\leq  (n-2\lceil nb_n\rceil+1) \\|k_2-k_1|\leq 2\lceil nb_n \rceil-m_n, 1\leq j_1,j_2\leq p}}\Big|\E\sigma^{\tilde T^\diamond}_{(k_1-1)p+j_1,(k_2-1)p+j_2}-\sigma^{\tilde Y}_{(k_1-1)p+j_1,(k_2-1)p+j_2}\Big|=O \Big (
\frac{\log^2 n}{m_n}+\frac{m_n\log n}{nb_n} \Big ).
\end{align}
Furthermore,  
using \eqref{maxtrick},
the  Cauchy-Schwartz inequality,  a similar argument as given in  the proof of Lemma 1 of
\cite{zhou2013heteroscedasticity} and Assumption \ref{asserr}(2) yield  that
\begin{align}\label{eq88-2021}
\Big \|\max_{\substack{1\leq k_1\leq k_2\leq  (n-2\lceil nb_n\rceil+1),\\ 1\leq j_1,j_2\leq p}}|\E\sigma^{\tilde T^\diamond}_{(k_1-1)p+j_1,(k_2-1)p+j_2}-\sigma^{\tilde T^\diamond}_{(k_1-1)p+j_1,(k_2-1)p+j_2}| \Big \|_{q/2}=O\Big(\sqrt{\frac{m_n}{nb_n}}(np)^{4/q}\Big).
\end{align}
Combining  \eqref{eq87-2021} and \eqref{eq88-2021}, we obtain
\begin{align}\label{B-32-new}
\Big \|\max_{\substack{k_1,k_2,j_1,j_2\\|k_2-k_1|\leq 2\lceil nb_n\rceil-m_n}}|\sigma^{\tilde Y}_{(k_1-1)p+j_1,(k_2-1)p+j_2}-\sigma^{\tilde T^\diamond}_{(k_1-1)p+j_1,(k_2-1)p+j_2}\Big \|_{q/2}
\notag\\=O\Big(\frac{\log^2n}{m_n}+\frac{m_n\log n}{nb_n}+\sqrt{\frac{m_n}{nb_n}}(np)^{4/q}\Big).
\end{align}

Therefore the estimate  \eqref{eq67-2021} follows combining  \eqref{B-21-new} and \eqref{B-32-new}.  
\end{proof}

\subsection{Proof of Theorem \ref{sigma-asy}}
\label{631}
Similarly to  
\eqref{mean-inference} and \eqref{mean-inference2} in the proof of Proposition \ref{Grid-Result}  we obtain
\begin{align} \label{6-17-115}
\sup_{\substack{u\in [b_n,1-b_n]\\t\in[0,1]}} \frac{1}{\sigma(u,t)}\Big | \E(\hat m(u,t))-m(u,t)
\Big |\leq M \Big (\frac{1}{n}+b_n^4 \Big )
\end{align}
for some constant $M$, where we have used the fact that, by Assumption \ref{asskern}, $\int K(v)v^2dv =0 $. 
Moreover,
by a  similar but simpler argument as given in the proof of equation (B.7) in Lemma B.3 of \cite{dette2019change}
we have   for  the quantity
\begin{align*}
\frac{(\hat m(u,t)-\E(\hat m(u,t)))}{\sigma(u,t)}= \frac{1}{nb_n\tilde K(u)}\sum_{i=1}^n \frac{G(\tfrac{i}{n},t,\FF_i)}{\sigma(u,t)}K \Big (\frac{\tfrac{i}{n}-u}{b_n}\Big )  := \Psi^\sigma(u,t) 
\end{align*} 
the estimate 
\begin{align}\label{6-17-118}
\Big \|\sup_{u\in[b_n,1-b_n],t\in[0,1]}\sqrt{nb_n}|\Phi^\sigma(u,t)-\Psi^\sigma(u,t)| \Big \|_q=O(b_n^{1-2/q}),
\end{align}
where
\begin{align*}
\Phi^\sigma(u,t)=\frac{1}{nb_n\tilde K(u)}\sum_{i=1}^n \frac{G(\tfrac{i}{n},t,\FF_i)}{\sigma(\tfrac{i}{n},t)}K(\frac{\tfrac{i}{n}-u}{b_n}).
\end{align*}
Following the proof of  Theorem \ref{SCB} we find that  
\begin{align*}
\sup_{x\in \mathbb R}
\Big |\p
\Big (\max_{b_n \leq u\leq 1-b_n,0\leq t\leq 1}\sqrt{nb_n}\big|{\Phi^\sigma(u,t)}\big|\leq x \Big )-\p\Big (\Big |\frac{1}{\sqrt{nb_n}}\sum_{i=1}^{2\lc nb_n\rc-1}\tilde Y^\sigma_i\Big |_\infty\leq x\Big )\Big  |\notag\\=O\Big((nb_n)^{-(1-11\iota)/8}+\Theta\Big(\big((np)^{1/q^*}((nb_n)^{-1}+1/p)\big)^{\frac{q^*}{q^*+1}}, n p\Big)\Big).
\end{align*}
Combining this result  with    Lemma \ref{anti} 
(with
$X=\max_{\substack{b_n \leq u\leq 1-b_n\\0\leq t\leq 1}}\sqrt{nb_n}\big|{\Phi^\sigma(u,t)}\big|$,
\\
$Y=\frac{1}{\sqrt{nb_n}}\sum_{i=1}^{2\lc nb_n\rc-1}\tilde Y^\sigma_i$, $X'=\max_{b_n \leq u\leq 1-b_n,0\leq t\leq 1}\sqrt{nb_n}\big|{\Psi^\sigma(u,t)}\big|$ 
) and \eqref{6-17-118}
gives
\begin{align}\label{6-17-120}
\sup_{x\in \mathbb R}\Big|\p\Big(\max_{b_n \leq u\leq 1-b_n,0\leq t\leq 1}\sqrt{nb_n}\big|{\Psi^\sigma(u,t)}\big|\leq x\Big)-\p\Big(\Big|\frac{1}{\sqrt{nb_n}}\sum_{i=1}^{2\lc nb_n\rc-1}\tilde Y^\sigma_i\Big|_\infty\leq x\Big)\Big|\notag\\
=O\Big((nb_n)^{-(1-11\iota)/8}+\Theta(\big((np)^{1/q^*}((nb_n)^{-1}+1/p)\big)^{\frac{q^*}{q^*+1}}, n p)\notag\\
+\p\Big(\sup_{u\in[b_n,1-b_n],t\in[0,1]}\sqrt{nb_n}|\Phi^\sigma(u,t)-\Psi^\sigma(u,t)|>\delta\Big) +\Theta(\delta, np)\Big) \notag\\
=O\Big((nb_n)^{-(1-11\iota)/8}+\Theta(\big((np)^{1/q^*}((nb_n)^{-1}+1/p)\big)^{\frac{q^*}{q^*+1}}, n p)+\Theta(\delta, np)+\frac{b_n^{q-2}}{\delta^q}\Big).
\end{align}
Taking $\delta=b_n^{\frac{q-2}{q+1}}$ we obtain for the last two terms 
in \eqref{6-17-120}
\begin{align*}
\Theta(\delta, np)+\frac{b_n^{q-2}}{\delta^q}=O\Big(\Theta(b_n^{\frac{q-2}{q+1}},np)\Big).
\end{align*}
On the other hand, \eqref{6-17-115}, \eqref{6-17-120} and  Lemma \ref{anti}  (with
$X=\max_{b_n \leq u\leq 1-b_n,0\leq t\leq 1}\sqrt{nb_n}\big|{\Psi^\sigma(u,t)}\big|$, $Y=\frac{1}{\sqrt{nb_n}}\sum_{i=1}^{2\lc nb_n\rc-1}\tilde Y^\sigma_i$, $X'=\max_{b_n \leq u\leq 1-b_n,0\leq t\leq 1}\sqrt{nb_n}\big|\hat \Delta^\sigma(u,t)\big|$ and   $\delta=M\sqrt{nb_n} (\frac{1}{n}+b_n^4)$
with a sufficiently large constant $M$)  
yield
\begin{align*}
\sup_{x\in \mathbb R}
\Big |\p \Big 
(\max_{b_n \leq u\leq 1-b_n,0\leq t\leq 1}\sqrt{nb_n}\big|\hat \Delta^\sigma(u,t)\big|\leq x \Big )-\p \Big ( \Big |\frac{1}{\sqrt{nb_n}}\sum_{i=1}^{2\lc nb_n\rc-1}\tilde Y^\sigma_i\Big |_\infty\leq x)\Big |\notag\\
=O \Big  ((nb_n)^{-(1-11\iota)/8}+\Theta
\big  (\big((np)^{1/q^*}((nb_n)^{-1}+1/p)\big)^{\frac{q^*}{q^*+1}}, n p \big ) \\
+\Theta \Big 
(\sqrt{nb_n}(b_n^4+\frac{1}{n}),np \Big )+\Theta(b_n^{\frac{q-2}{q+1}},np)\Big).
\end{align*}
\hfill $\Box$

\subsection{Proof of Theorem \ref{sigma-bootstrap}}

\label{633}

\begin{proof}
Recall that $g_n=\frac{w^{5/2}}{n}\tau_n^{-1/q'}+w^{1/2}n^{-1/2}\tau_n^{-1/2-2/q'}+w^{-1}$ and let 
$\eta_n$ be a sequence of positive numbers such that  $\eta_n\rightarrow \infty$ and  $(g_n+\tau_n)\eta_n\rightarrow 0$ (note that $g_n+\tau_n$ is the convergence rate of the estimator $\hat \sigma^2$ in Proposition \ref{prop-lrv}). 
Define the $\FF_n$ measurable event  
$$
A_n= \Big \{\sup_{u\in [0,1],t\in[0,1]}|\hat \sigma^2(u,t)-\sigma^2(u,t)|>(g_n+\tau_n)\eta_n \Big  \}~,
$$ 
then  Proposition \ref{prop-lrv}  
and  Markov's inequality
yield
\begin{align}\label{6-18-124}
\p(A_n)=O \big  (  {\eta_n^{- q'}} \big ).
\end{align}	
Then by Theorem \ref{sigma-asy}, Proposition \ref{prop-lrv} and Lemma \ref{anti} 
we have \begin{align}\label{Deltahatsigma}
\mathfrak{P}^{\hat{\sigma}} =
\sup_{x\in \mathbb R} \Big |\p \Big ( 
\max_{b_n \leq u\leq 1-b_n,0\leq t\leq 1}\sqrt{nb_n}|\hat \Delta^{\hat \sigma(u,t)}|\leq x \Big )-\p 
\Big (\Big |\frac{1}{\sqrt{nb_n}}\sum_{i=1}^{2\lc nb_n\rc-1}\tilde Y^\sigma_i \Big  |_\infty\leq x\Big )\Big |=o_p(1).
\end{align}
Let $T^{\hat \sigma}_k$  denote the  statistic $T^{\hat \sigma, (r) }_k$ in step (d) of Algorithm \ref{algorithm3}  generated  by  one bootstrap iteration  and define for 
integers $a,b$ the quantities  \begin{align*}
T_{ap+b}^{\hat \sigma,\diamond}&=\sum_{j=1}^{2\lceil nb_n \rceil-m_n'}\hat S^{\hat \sigma}_{jm_n',(a-1)p+b} {R_{k+j-1}},a=1,...n-2\lceil nb_n\rceil+1,1\leq b\leq p\\
T^{\hat \sigma,\diamond} &  := ((T_1^{\hat \sigma,\diamond})^\top  , \ldots , (T_{(n-2\lceil nb_n\rceil+1 )p}^{\hat \sigma,\diamond})^\top)^\top
= \big ({T_1^{\hat \sigma}}^\top ,\ldots , {T_{  n-2\lceil nb_n\rceil+1}^{\hat \sigma}}^\top \big )^\top \end{align*}and therefore
\begin{align*} 
T^{\hat \sigma} & =
|  T^{\hat \sigma,\diamond}|_{\infty} =
\max_{1\leq k\leq n-2\lceil nb_n\rceil+1} |T_k^{\hat \sigma}|_\infty
\end{align*}
We recall the notation \eqref{tildeZisigma},    introduce
the $(n-2\lceil nb_n\rceil +1)p$-dimensional  random vectors $ \hat S^{ \sigma,*}_{jm_n}  =\sum_{r=j}^{j+m_n-1 } {\tilde  Z}^{ \sigma}_{r},$ and 
\begin{align*}
\hat S^{ \sigma}_{jm_n'}& =\frac{1}{\sqrt{m'_n}}\hat  S^{ \sigma,*}_{j,\lfloor m_n/2\rfloor} - \frac{1}{\sqrt{m'_n}}\hat S^{ \sigma,*}_{j+\lfloor m_n/2\rfloor+1,\lfloor m_n/2\rfloor}~,
\end{align*} 
and consider 
\begin{align} \nonumber 
T_k^{ \sigma}& =\sum_{j=1}^{2\lceil nb_n \rceil-m_n'}\hat S^{ \sigma}_{jm'_n,[(k-1)p+1:kp]} {R_{k+j-1}} ~,~~ k= 1 , \ldots  , n-2\lceil nb_n\rceil+1  , \\
T^{ \sigma,\diamond} &  := ((T_1^{ \sigma,\diamond})^\top  , \ldots , (T_{(n-2\lceil nb_n\rceil+1 )p}^{ \sigma,\diamond})^\top)^\top 
= \big ({T_1^{ \sigma}}^\top ,\ldots , {T_{  n-2\lceil nb_n\rceil+1}^{ \sigma}}^\top \big )^\top ~\nonumber ,
\end{align} 
where  $ T^{ \sigma,\diamond}$ is obtained from $  T^{\hat \sigma,\diamond} $ by replacing
$\hat \sigma $ by $\sigma$.
Similar arguments as given in the 
proof of Theorem \ref{Thm2.1-2021} show, that it 
is sufficient to show the estimate
\begin{align}\label{4.23-7-31}
&	\sup_{x\in \mathbb R}\Big|\p(|T^{\hat \sigma, \diamond}/\sqrt{ 2\lceil nb_n\rceil-m'_n}|_\infty \leq x|\FF_n)-\p\Big(\frac{1}{\sqrt{2nb_n}}\Big|\sum_{i=1}^{2\lc nb_n\rc-1}\tilde Y^\sigma_i\Big|_\infty\leq x\Big)\Big|\notag\\&=O_p\Big(\vartheta^{1/3}_n\{1\vee \log (\frac{np}{\vartheta_n})\}^{2/3}+\Theta\big(\sqrt{m_n\log np}(\frac{1}{\sqrt{nb_n}}+b_n^3)(np)^{\frac{1}{q}}\big)^{q/(q+1)},np\big)\notag\\&+\Theta\big(\big(\sqrt{m_n\log np}((g_n+\tau_n)\eta_n)(np)^{\frac{1}{
	q}}\big)^{q/(q+1)},np\big)+
	\eta_n^{-q'}\Big)
\end{align}
where $\vartheta_n$ is defined in Theorem  \ref{Thm2.1-2021}.
The  assertion of Theorem \ref{sigma-bootstrap} then   follows from \eqref{Deltahatsigma}. 

Now we prove \eqref{4.23-7-31}.
By the first step in the proof of Theorem   \ref{Thm2.1-2021} 
it follows  that 
\begin{align}\label{6-18-123}
&	\sup_{x\in \mathbb R}\Big|\p(|T^{\sigma, \diamond}/\sqrt{ 2\lceil nb_n\rceil-m'_n}|_\infty \leq x|\FF_n)-\p\Big(\frac{1}{\sqrt{2nb_n}}\Big|\sum_{i=1}^{2\lc nb_n\rc-1}\tilde Y^\sigma_i\Big|_\infty\leq x\Big)\Big|\notag\\&=O_p\Big(\vartheta^{1/3}_n\{1\vee \log (\frac{np}{\vartheta_n})\}^{2/3}\notag\\&+\Theta\Big(\big(\sqrt{m_n\log np}(\frac{1}{\sqrt{nb_n}}+b_n^3)(np)^{\frac{1}{q}}\big)^{q/(q+1)},np\Big)
\Big) .
\end{align}
By similar arguments as given in the proof of Theorem \ref{Thm2.1-2021} we have
\begin{align}\label{6-18-125}
\E \big (| T^{\sigma,\diamond}-T^{\hat \sigma, \diamond}|^q_\infty\mathbf 1(A_n)\big |\FF_n\big )
&\leq M\Big|\sqrt{\log np}\max_{r=1}^{(n-2\lceil nb_n\rceil+1 )p} \Big(\sum_{j=1}^{\lceil 2nb_n \rceil-m_n'}( \hat S^\sigma_{jm'_n,r}- \hat S^{\hat \sigma}_{jm'_n,r})^2\mathbf 1(A_n)\Big)^{1/2}\Big|^q 
\end{align}

for some large constant $M$ almost surely, and the triangle inequality, a similar argument as given in the proof of Proposition 1.1 in \cite{dette2020prediction} and 
\eqref{maxtrick} yield  
\begin{align*}
\frac{1}{\sqrt{ 2\lceil nb_n\rceil-m_n}}
\Big \|\max_{r=1}^{(n-2\lceil nb_n\rceil+1 )p} \Big(\sum_{j=1}^{\lceil 2nb_n \rceil-m_n'}(\hat S^\sigma_{jm'_n,r}- \hat S^{\hat \sigma}_{jm'_n,r})^2\mathbf 1(A)\Big)^{1/2}\Big \|_{q} =O\big(\sqrt{m_n}(g_n+\tau_n)\eta_n(np)^{\frac{1}{q}}\big).
\end{align*}

This together with the (conditional version) of Lemma \ref{anti} and  \eqref{6-18-125} shows that 
\begin{align}
\sup_{x\in \mathbb R} \Big |\p \Big (\frac{|T^{\hat \sigma, \diamond}|_\infty}{\sqrt{2\lceil nb_n\rceil-m_n}}
> x \Big |\FF_n \Big )-\p  \Big (\frac{1}{\sqrt{2nb_n}}\Big |\sum_{i=1}^{2\lc nb_n\rc-1}\tilde Y^\sigma_i \Big|_\infty>x \Big )\Big |
\notag\\ \leq
\sup_{x\in \mathbb R} \Big |\p \Big(
\frac{|T^{\sigma, \diamond}|_\infty }{\sqrt{ 2\lceil nb_n\rceil-m_n}}
> x \Big |\FF_n \Big )-\p\Big (\frac{1}{\sqrt{2nb_n}}\Big |\sum_{i=1}^{2\lc nb_n\rc-1}\tilde Y^\sigma_i \Big|_\infty>x)\Big |\notag\\+\p\Big ( \frac{|T^{\diamond,\sigma}-T^{\diamond,\hat \sigma}|_\infty}{\sqrt{ 2\lceil nb_n\rceil-m_n}}   >\delta \Big|\FF_n\Big )+O\big(\Theta(\delta, np) \big )\notag\\
\leq
\sup_{x\in \mathbb R} \Big |\p \Big ( \frac{| T^{\sigma,\diamond}|_\infty }{\sqrt{ 2\lceil nb_n\rceil-m_n}} > x\Big |\FF_n \Big )-\p \Big(\frac{1}{\sqrt{2nb_n}} \Big |\sum_{i=1}^{2\lc nb_n\rc-1}\tilde Y^\sigma_i \Big |_\infty>x\Big )\Big |\notag\\
+O_p\big(\delta^{-q}\big(\sqrt{m_n\log np}((g_n+\tau_n)\eta_n)(np)^{\frac{1}{q}}\big)^{q}\big)+O\big(\Theta(\delta, np)+\eta_n^{-q'}\big),
\nonumber 
\end{align}
where we used  Markov's inequality and  \eqref{6-18-124}. Taking $$\delta=\big(\sqrt{m_n\log np}((g_n+\tau_n)\eta_n)(np)^{\frac{1}{q}}\big)^{q/(q+1)}$$  and observing \eqref{6-18-123} yields \eqref{4.23-7-31} and proves the assertion.
\end{proof}


\setcounter{equation}{0}





\section{Proposition \ref{Grid-Result} and Proof of Proposition \ref{prop-lrv}}\label{Proof-Prop}

\subsection{Proposition \ref{Grid-Result}}
The proof of Theorems \ref{SCB} is  based on the following auxiliary result providing a Gaussian approximation for the maximum deviation of the quantity  $\sqrt{nb_n}|\hat \Delta(u,t_v)|$
over the grid of $\{1/n,...,n/n\}\times \{t_1,...,t_p\}$ where $t_v= \frac{v}{p}$ ($v=1, \ldots , p$).   
\begin{proposition}\label{Grid-Result}
Assume that $n^{1+a}b_n^9=o(1)$, $n^{a-1}b_n^{-1}=o(1)$ for  some $0<a<4/5$,
and let Assumptions  \ref{assreg},  \ref{asserr} and \ref{asskern} be satisfied.
\begin{itemize}
\item[(i)] 
For a fixed  $u\in (0,1)$, let $Y_1(u) , \ldots , Y_n (u)$ 
denote  a sequence of centered  $p$-dimensional Gaussian vectors  such that  $ Y_i(u) $ has the same auto-covariance structure of the vector  $Z_i(u)$ defined in \eqref{3.1}. If 
$p = O (\exp(n^{\iota}))$	 
for some $0\leq \iota <1/11$, then
\begin{align*}
\mathfrak{P}_{p,n} (u)	 &:=    \sup_{x\in \mathbb R}\Big|\p \Big(\max_{1\leq v\leq p}\sqrt{nb_n}|\hat \Delta(u,t_v)|\leq x \Big)-\p \Big( \Big |\frac{1}{\sqrt{nb_n}}\sum_{i=1}^nY_i(u) \Big |_\infty\leq x \Big ) \Big |\notag\\
& ~~~~~~~~~~~  = O \Big ((nb_n)^{-(1-11\iota)/8}+\Theta\Big(\sqrt{nb_n}\Big(b_n^4+\frac{1}{n}\Big), p\Big)\Big)
\end{align*}

\item[(ii)]	Let
$\tilde Y_1, \ldots , \tilde Y_{2\lc nb_n\rc-1}$ denote independent
$(n-2\lceil nb_n\rceil +1)p$-dimensional centered Gaussian vectors with the same auto-covariance structure as  the vector $\tilde Z_i$
in \eqref{tildeZi}.
If $np=O(\exp(n^\iota))$  for some $0\leq \iota <1/11$,
then
\begin{align*}
\mathfrak{P}_{p,n}  	 &:=       \sup_{x\in \mathbb R}\Big|\p\Big(\max_{\lc nb_n\rc\leq l\leq n-\lc nb_n\rc,1\leq v\leq p}\sqrt{nb_n}|\hat \Delta(\tfrac{l}{n},t_v)|\leq x\Big)-\p\Big(\Big|\frac{1}{\sqrt{nb_n}}\sum_{i=1}^{2\lc nb_n\rc-1}\tilde Y_i\Big|_\infty\leq x\Big)\Big|\notag\\
& ~~~~~~~~~~ =O\Big((nb_n)^{-(1-11\iota)/8}+\Theta\Big(\sqrt{nb_n}\Big(b_n^4+\frac{1}{n}\Big), np\Big)\Big)
\end{align*}
\end{itemize}
\end{proposition}
\begin{proof}
Using Assumptions   \ref{assreg}, \ref{asskern} and  a  Taylor expansion  we obtain  
\begin{align}\label{mean-inference}
\sup_{\stackrel{u\in [b_n,1-b_n]}{t\in[0,1]}}\Big|\E(\hat m(u,t))-m(u,t)-b_n^2\int K(v)v^2dv\frac{\partial^2 }{\partial u^2}m(u,t)/2\Big|\leq M \Big(\frac{1}{n}+b_n^4\Big)
\end{align}
for some constant $M$. Notice that by assumption $\int K(v)v^2dv=0$.
Notice that for $u\in [b_n,1-b_n]$, 
\begin{align}
\label{mean-inference2}
\hat m(u,t)-\E(\hat m(u,t)) &=\frac{1}{nb_n\tilde K(u)}\sum_{i=1}^n G(\tfrac{i}{n},t,\FF_i)K\Big(\frac{\tfrac{i}{n}-u}{b_n}\Big) \\
& =\frac{1}{nb_n\tilde K(u)}
\sum_{i=\lceil n(u-b_n)\rceil}^{\lf n(u+b_n)\rf }
G(\tfrac{i}{n},t,\FF_i)K\Big(\frac{\tfrac{i}{n}-u}{b_n}\Big).
\nonumber
\end{align} 
Therefore, observing the definition of $Z_i(u)$ 
in \eqref{3.1} we have
(notice that $Z_i(u)$ is a vector of zero if $|\tfrac{i}{n}-u|\geq b_n$) 
\begin{align*}
\max_{1\leq v\leq p}
\sqrt {nb_n}|\hat m(u,t_v)-\E(\hat m(u,t_v))|\tilde K(u)=\Big|\frac{1}{\sqrt{nb_n}} 
\sum_{i=\lceil n(u-b_n)\rceil}^{\lf n(u+b_n)\rf }
Z_i(u)\Big|_\infty.
\end{align*}
We will now apply Corollary 2.2 of  \cite{zhang2018gaussian}  and check its assumptions first.
By Assumption \ref{asserr}(2)  and  the fact that the kernel is bounded it follows that
\begin{align*}
\max_{1\leq l\leq p}\sup_i\|Z_{i,l}(u)-Z_{i,l}^{(i-j)}(u)\|_2=O(\chi^j),
\end{align*}
where for any  (measurable function) $g=g(\FF_i)$, we define for $j\leq i$ the function
$g^{(j)}$ by  $g^{(j)}=g(\FF_i^{(j)})$, where $\FF_i^{(j)}=(\ldots,\eta_{j-1}, \eta_j',\eta_{j+1}, \ldots , \eta_i)$ and $\{\eta'_i\}_{i\in \mathbb Z}$ is an independent  copy of $\{\eta_i\}_{i\in \mathbb Z}$ (recall that  $\FF_i=(\eta_{-\infty},...,\eta_i)$). 
Lemma \ref{Lemma1.3} in Section \ref{aux}
shows that condition  (9) in the paper of  \cite{zhang2018gaussian} is satisfied. Moreover Assumption \ref{asserr}(1)
implies condition  (13) in this reference. 
Observing that for  random vector $v=(v_1,...,v_p)^\top $ and all $x\in \mathbb R$
$$
\{|v|_\infty\leq x\} =\Big \{\max(v_1,...,v_p,-v_{1},...,-v_{p})\leq x \Big \},
$$ 
we can use 
Corollary 2.2 of \cite{zhang2018gaussian} 
\begin{align}\label{approx_Yi(u)}
\sup_{x\in \mathbb R}\Big|\p\Big(\Big|\frac{1}{\sqrt {nb_n}}\sum_{i=1}^nY_i(u)\Big|_\infty\leq x\Big)-\p\Big(\frac{1}{\sqrt {nb_n}}\Big|\sum_{i=1}^nZ_i(u)\Big|_\infty\leq x\Big)\Big|=O((nb_n)^{-(1-11\iota’)/8}).
\end{align}
Therefore by \eqref{mean-inference}, \eqref{approx_Yi(u)} and Lemma \ref{anti}, and the fact that $\tilde K(u)=1+O(\frac{1}{nb_n})$ for $b_n\leq u\leq 1-b_n$
\begin{align}\label{2023-10-1}
& \sup_{x\in \mathbb R}\Big|\p \Big(\max_{1\leq v\leq p}\sqrt{nb_n}|\hat \Delta(u,t_v)|\leq x \Big)-\p \Big( \Big |\frac{1}{\sqrt{nb_n}}\sum_{i=1}^nY_i(u) \tilde K(u)\Big |_\infty\leq x \Big ) \Big |\notag\\
& = O \Big ((nb_n)^{-(1-11\iota)/8}+\Theta\Big(\sqrt{nb_n}\Big(b_n^4+\frac{1}{nb_n}\Big), p\Big)\Big).
\end{align}
Using  Theorem 2 of \cite{chernozhukov2015comparison}, it follows that
\begin{align}\label{2023-10-2}
& \sup_{x\in \mathbb R}\Big|\p \Big( \Big |\frac{1}{\sqrt{nb_n}}\sum_{i=1}^nY_i(u)\Big |_\infty\leq x \Big )-\p \Big( \Big |\frac{1}{\sqrt{nb_n}}\sum_{i=1}^nY_i(u) \tilde K(u)\Big |_\infty\leq x \Big ) \Big |\notag\\
& = O \Big ((nb_n)^{-1/3}\log^{2/3} (npb_n)\Big).
\end{align}
Since $p = O (\exp(n^{\iota}))$ ,
Then part (i) of the assertion  follows from \eqref{2023-10-1} and \eqref{2023-10-2}.
For part (ii), notice that $\tilde K(i/n)=\tilde K(j/n)$ for $i,j\in \mathbb Z$ such that $b_n\leq i/n,j/n\leq 1-b_n$. Let $\tilde K=\tilde K(\lf n/2\rf/n)$. Further note that by the definition of the vector  $\tilde Z_i$ in \eqref{tildeZi}  we have that  (Recall the notation $W_n(u,t)$ in \eqref{3.3})
\begin{align}\label{neweq}
\max_{1\leq v\leq p}\max_{\lceil nb_n\rceil\leq l\leq n-\lceil nb_n\rceil }
\tilde K| W_n(\tfrac{l}{n},t_v)|&=\max_{\lceil nb_n\rceil\leq l\leq n-\lceil nb_n\rceil }\Big|\frac{1}{\sqrt{nb_n}}\sum_{i=1}^n Z_i(\frac{l}{n})\Big|_\infty
=\Big|\frac{1}{\sqrt {nb_n}}\sum_{i=1}^{2\lc nb_n\rc-1}\tilde Z_i\Big|_\infty~.
\end{align}
Let
$\tilde Z_{i,s}$ denote the $s$th entry of the vector 
$\tilde Z_i$ defined in \eqref{tildeZi} ($1\leq s\leq (n-2\lc nb_n\rc+1)p$). 
By Assumption  \ref{asserr}(2)  
it follows that
\begin{align*}
\max_{1\leq s\leq (n-2\lceil nb_n\rceil +1)p}\sup_i\|\tilde Z_{i,s}-\tilde Z_{i,s}^{(i-j)}\|_2=O(\chi^j).
\end{align*}
By  Lemma \ref{Lemma1.3} in Section \ref{aux} we obtain  the inequality \begin{align*}
c_1\leq \min_{1\leq j\leq (n-2\lc nb_n\rc+1)p}\tilde \sigma_{j,j}\leq \max_{1\leq j\leq (n-2\lc nb_n\rc+1)p}\tilde \sigma_{j,j}\leq c_2
\end{align*}
for the quantities
$$
\tilde \sigma_{j,j}:=
{1 \over 2\lc nb_n\rc -1} \sum_{i,l=1}^{2\lc nb_n\rc-1}{\rm Cov}(\tilde Z_{i,j},\tilde Z_{l,j}).
$$
Therefore condition  (9) in the paper of  \cite{zhang2018gaussian} holds, and condition  (13) 
in this reference 
follows from  Assumption \ref{asserr}(1). As a consequence,  Corollary 2.2  in  \cite{zhang2018gaussian} (the validity of Corollary 2.2 of \cite{zhang2018gaussian} for $\tilde Z_i$ can be verified via the argument  of Proposition 2.1, A.1 and Theorem 2.1 of that paper and via \eqref{neweq}; details are omitted  for the sake of brevity)
yields 
\begin{align}\label{eq25-2021-2-2}
\sup_{x\in \mathbb R}\Big|\p\Big(\max_{
{\substack{\lceil nb_n\rceil \leq l_1\leq n-\lc nb_n\rc \\ 1\leq l_2\leq p}}}\tilde  K |W_n(\tfrac{l_1}{n},\tfrac{l_2}{p})|\leq x\Big)-\p\Big(\Big|\frac{1}{\sqrt{nb_n}}\sum_{i=1}^{2\lc nb_n\rc-1}\tilde Y_i\Big|_\infty\leq x\Big)\Big|=O((nb_n)^{-(1-11\iota)/8}).
\end{align}
Using  Theorem 2 of \cite{chernozhukov2015comparison} and the the fact that $\tilde K=1+O(\frac{1}{nb_n})$, it follows that
\begin{align}
\sup_{x\in \mathbb R}\Big|\p \Big( \Big |\frac{1}{\sqrt{nb_n}}\sum_{i=1}^n\tilde Y_i\Big |_\infty\leq x \Big )-\p \Big( \Big |\frac{1}{\sqrt{nb_n}}\sum_{i=1}^n\tilde Y_i \tilde  K\Big |_\infty\leq x \Big ) \Big |
= O \Big ((nb_n)^{-1/3}\log^{2/3} (n^2pb_n)\Big).\label{2023-10-3}
\end{align}
Consequently part  (ii) follows 
by the same arguments given in the  proof of part (i) via an application of  Lemma \ref{anti}. 
\end{proof}

\subsection{Proof of Proposition \ref{prop-lrv}}
\label{632}
\begin{proof}
Define
$\tilde S^G_{k,r}(t)=\frac{1}{\sqrt r}\sum_{i=k}^{k+r-1}G(i/n,t,\FF_i)$, and  define  for $u\in[w/n,1-w/n]$
$$
\tilde \Delta_j(t)=\frac{\tilde S^G_{j-w+1,w}(t)-\tilde S^G_{j+1,w}(t)}{\sqrt w}
~,~~
\tilde{\sigma}^2(u,t)=\sum_{j=1}^n\frac{w\tilde \Delta_j^2(t)}{2}\bar \omega(u,j)
$$  
as the analogs of  $\Delta_j(t)$ defined in the main article and the quantities in   \eqref{2018-5.4}, respectively.
We also use the convention $\tilde\sigma^2(u,t)=\tilde\sigma^2(w/n,t)$  and 
$\tilde \sigma^2(u,t)=\tilde \sigma^2(1-w/n,t)$ if 
$u\in[0,w/n)$ and   $u\in(1-w/n,1]$, respectively.
Assumption \ref{assreg} and the mean value theorem yield
\begin{align}\label{6-16-99}
\max_{w\leq j\leq n-w}\sup_{0\leq t\leq 1}|\tilde \Delta_j(t)-\Delta_j(t)|=\max_{w\leq j\leq n-w}\sup_{0\leq t\leq 1} \Big |\sum^{j}_{r=j-w+1}m(r/n,t)-\sum_{r=j+1}^{j+w}m(r/n,t)
\Big |=O(w/n).
\end{align}
On the other hand,   Assumption \ref{asserr} and Assumption \ref{asssim}
and   similar arguments as given in  the proof of Lemma 3 of \cite{zhou2010simultaneous} give 
\begin{align}\label{6-16-100}
\max_j\|\tilde \Delta_j(t)\|_{q'}=O(\sqrt w), ~\max_j \Big \|\frac{\partial}{\partial t}\tilde \Delta_j(t)\Big  \|_{q'}=O(\sqrt w).
\end{align}
{Here  we use the convention that $\frac{\partial }{\partial t}\tilde \Delta_j|_{t=0}=\frac{\partial }{\partial t}\tilde \Delta_j|_{t=0+}$, $\frac{\partial }{\partial t}\tilde \Delta_j|_{t=1}=\frac{\partial }{\partial t}\tilde \Delta_j|_{t=1-}$.}
Moreover, Proposition B.1. of \cite{dette2019change} yields
\begin{align}
\max_j \Big  \|\sup_t|\tilde \Delta_j(t)|\Big  \|_{q'}=O(\sqrt w).\label{6-16-101}
\end{align}
Now we introduce the notation $C_j(t)=\tilde \Delta_j(t)-\Delta_j(t)$ (note that 
this quantity is not random) and obtain  by \eqref{6-16-99} the representation 
\begin{align}\label{preWdiamond}
\tilde \sigma^2(u,t)-\hat \sigma^2(u,t)&=\sum_{j=1}^n\frac{w(2\tilde \Delta_j(t)-C_j(t))C_j(t)}{2}\bar w(u,j)\notag
\\&=\sum_{j=1}^nw\tilde \Delta_j(t)C_j(t)\bar \omega(u,j)+O(w^3/n^2)
\end{align}
uniformly with respect to $u,t$. Furthermore,  by \eqref{6-16-99}
we have 
\begin{align}
\sup_{t\in[0,1]}\Big |\sum_{j=1}^nw\tilde \Delta_j(t)C_j(t)\bar \omega(u,j)
\Big  |\leq
W^\diamond(u) := M(w/n)\sum_{j=1}^nw\sup_{t\in[0,1]}|\Delta_j(t)|\bar \omega(u,j)~,
\end{align}
where $M$ is a sufficiently large constant. Notice that $W^\diamond(u)$ is differentiable with respect to the variable  $u$. Therefore it follows from the  triangle inequality, \eqref{6-16-101} and Proposition B.1 of \cite{dette2019change},  that
\begin{align}\label{Wdiamond}
\Big \|\sup_{u\in[\gamma_n,1-\gamma_n]}|W^\diamond(u)|\Big  \|_{q'}=O \Big  (\frac{w^{5/2}}{n}\tau_n^{-1/q'}\Big  ).
\end{align}
Combining \eqref{preWdiamond}-- \eqref{Wdiamond}, we obtain
\begin{align}\label{6-17-105}
\Big  \|\sup_{\substack{u\in[\gamma_n,1-\gamma_n]\\t\in [0,1]}} |\tilde \sigma^2(u,t)-\hat \sigma^2(u,t)|\Big \|_{q'}=O \Big (\frac{w^{5/2}}{n}\tau_n^{-1/q'}+w^3/n^2\Big ).
\end{align} 
By Burkholder inequality  \citep[see for example][]{wu2005nonlinear} in $\mathcal L^{q'/2}$ norm, \eqref{6-16-100} and similar arguments as given in the proof of Lemma 3 in \cite{zhou2010simultaneous} we have
\begin{align}
\notag
&\sup_{\substack{u\in[\gamma_n,1-\gamma_n]\\t\in[0,1]}}\big \|\tilde \sigma^2(u,t)-\E(\tilde \sigma^2(u,t))\big \|_{q'/2}=O\big (w^{1/2}n^{-1/2}\tau_n^{-1/2}\big ),\\  \notag
&\sup_{\substack{u\in[\gamma_n,1-\gamma_n]\\t\in[0,1]}}\Big \|\frac{\partial }{\partial t}(\tilde \sigma^2(u,t)-\E(\tilde \sigma^2(u,t))) \Big \|_{q'/2}=O\big (w^{1/2}n^{-1/2}\tau_n^{-1/2}\big ),\\  
&\sup_{\substack{u\in[\gamma_n,1-\gamma_n]\\t\in[0,1]}}\Big  \|
\big  (\frac{\partial }{\partial u}+\frac{\partial^2 }{\partial u\partial t}
\big (\tilde \sigma^2(u,t)-\E(\tilde \sigma^2(u,t)) \big  )\Big  \|_{q'/2}=O\big (w^{1/2}n^{-1/2}\tau_n^{-1/2-1}\big ).\label{6-17-109-1}
\end{align}
It can be shown by  similar but simpler argument as given in the proof of Proposition B.2 of \cite{dette2019change}   that these estimates imply
\begin{align}
\big \|	\sup_{\substack{u\in[\gamma_n,1-\gamma_n]\\t\in[0,1]}}|\tilde \sigma^2(u,t)-\E(\tilde \sigma^2(u,t))|\big \|_{q'/2}=O\big (w^{1/2}n^{-1/2}\tau_n^{-1/2-2/q'}\big ).\label{6-17-109}
\end{align}
Moreover, it follows from the proof of Theorem 4.4 of \cite{dette2019detecting} 
that   
\begin{align} \notag
\sup_{\substack{u\in[\gamma_n,1-\gamma_n]\\t\in[0,1]}}\Big|\E\tilde \sigma^2(u,t)-\sigma^2(u,t)\Big|=O\Big(\sqrt{w/n}+w^{-1}+\tau_n^2\Big),\\
\sup_{\substack{u\in[0,\gamma_n)\cup(1-\gamma_n,1]\\t\in[0,1]}}\Big|\E\tilde \sigma^2(u,t)-\sigma^2(u,t)\Big|=O\Big(\sqrt{w/n}+w^{-1}+ {\tau_n}\Big)\label{6-17-111}
\end{align} 
and the assertion is a consequence of  \eqref{6-17-105}, \eqref{6-17-109} 
and \eqref{6-17-111}. 
\end{proof}

\section{Some auxiliary results} \label{aux}
\setcounter{equation}{0}

This section contains several technical lemmas, which will be used in the proofs of the main results in Section \ref{proof-main}.

\begin{lemma}\label{anti}
For any random vectors $X,X',Y$, and $\delta\in \mathbb R$, we have that
\begin{align}\label{eq39-2021}
\sup_{x\in \mathbb R}|\p(|X'|> x)-\p(|Y|>x)|\leq
\sup_{x\in \mathbb R}|\p(|X|> x)-\p(|Y|>x)|\notag\\+\p(|X-X'|>\delta)+2\sup_{x\in \mathbb R}\p(|Y-x|\leq \delta).
\end{align}
Furthermore, if $Y=(Y_1,...,Y_p)^\top$ is a $p$-dimensional  Gaussian  vector and there exist positive constants $c_1\leq c_2$ such that for all $1\leq j\leq p$, $c_1\leq \E(Y^2_j)\leq c_2$, then
\begin{align}\label{eq39-Gaussian-2021}
\sup_{x\in \mathbb R}|\p(|X'|> x)-\p(|Y|_\infty>x)|\leq
\sup_{x\in \mathbb R}|\p(|X|> x)-\p(|Y|_\infty>x)|+\p(|X-X'|>\delta)\notag\\+C\Theta(\delta, p),
\end{align}
where $C$ is a constant only dependent on $c_1$ and $c_2$.
\end{lemma}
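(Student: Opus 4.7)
The plan is to establish part (i) by a routine three-term decomposition comparing the event $\{|X'|>x\}$ to $\{|Y|>x\}$ through the intermediate event $\{|X|>x\pm\delta\}$, and then to reduce part (ii) to part (i) by invoking a Gaussian anti-concentration bound for $|Y|_\infty$. The underlying identity we exploit is that $|X'|\le|X|+|X-X'|$, hence $\{|X'|>x\}\subseteq\{|X|>x-\delta\}\cup\{|X-X'|>\delta\}$ and, symmetrically, $\{|X|>x+\delta\}\subseteq\{|X'|>x\}\cup\{|X-X'|>\delta\}$.

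First, for part (i), I would assume without loss of generality $\delta\ge 0$ (otherwise the right-hand side contains $\sup_x\p(|Y-x|\le\delta)$ as the probability of the empty event, and the inequality reduces to the Portmanteau-type bound in a trivial way). The inclusions above give
\begin{align*}
\p(|X|>x+\delta)-\p(|X-X'|>\delta)\ \le\ \p(|X'|>x)\ \le\ \p(|X|>x-\delta)+\p(|X-X'|>\delta).
\end{align*}
Replacing $\p(|X|>x\pm\delta)$ by $\p(|Y|>x\pm\delta)$ picks up an error of at most $\sup_z|\p(|X|>z)-\p(|Y|>z)|$, and replacing $\p(|Y|>x\pm\delta)$ by $\p(|Y|>x)$ picks up an error of at most $\p(x-\delta<|Y|\le x+\delta)\le\sup_y\p(|Y-y|\le\delta)$. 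Taking the supremum over $x$ and combining both bounds yields \eqref{eq39-2021} with the factor $2$ in front of $\sup_x\p(|Y-x|\le\delta)$ arising from the two-sided comparison.

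For part (ii), the single step that is not a matter of bookkeeping is controlling $\sup_x\p\big(\bigl||Y|_\infty-x\bigr|\le\delta\big)$ for a centred $p$-dimensional Gaussian $Y$ with variances uniformly in $[c_1,c_2]$. The event in question is $\{|Y|_\infty\le x+\delta\}\setminus\{|Y|_\infty< x-\delta\}$, i.e.\ a $\delta$-annular neighbourhood of the symmetric rectangle $\{y\in\R^p:|y|_\infty\le x\}$. By Nazarov's inequality (equivalently Theorem~3 of Chernozhukov, Chetverikov and Kato, 2015, applied to the $2p$ half-space constraints $\pm Y_j\le x$) one has
\begin{align*}
\sup_{x\in\R}\p\big(x-\delta\le|Y|_\infty\le x+\delta\big)\ \le\ C\,\delta\sqrt{1\vee\log(p/\delta)}\ =\ C\,\Theta(\delta,p),
\end{align*}
where $C$ depends only on $c_1$ and $c_2$ (through the lower variance bound that ensures non-degeneracy and the upper bound that keeps the logarithmic factor in correct form for small $\delta$). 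Substituting this into \eqref{eq39-2021} yields \eqref{eq39-Gaussian-2021}.

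The main (indeed only) non-routine ingredient is the Gaussian anti-concentration bound in part (ii); the rest is inclusion-exclusion on events. The only mild technicality is handling the two regimes $\delta\le p^{-1}$ and $\delta>p^{-1}$ separately to recover the $\sqrt{1\vee\log(p/\delta)}$ factor cleanly, and verifying that the constant from Nazarov's inequality depends only on $c_1,c_2$ and not on $p$ or $\delta$.
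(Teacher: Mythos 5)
Your proof is correct and follows essentially the same route as the paper: the same two-sided inclusion argument via the triangle inequality, the same three-term decomposition, and the same invocation of the Chernozhukov–Chetverikov–Kato (2015) Gaussian anti-concentration bound for part (ii) (the paper cites their Corollary~1, you cite Theorem~3/Nazarov; both deliver the $\Theta(\delta,p)$ rate with a constant depending only on $c_1,c_2$). Your version even yields the sharper factor $1$ rather than $2$ in front of $\sup_x\p(|Y-x|\le\delta)$, which of course implies the stated bound.
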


\begin{proof}[Proof of Lemma \ref{anti}] By triangle inequality, we shall see that
\begin{align}
\p(|X'|>x)-\p(|Y|>x)\leq \p(|X'-X|>\delta)+\p(|X|>x-\delta)-\p(|Y|>x),\label{eq40-2021}\\
\p(|X'|>x)-\p(|Y|>x)\geq -\p(|X'-X|>\delta)+\p(|X|>x+\delta)-\p(|Y|>x)\label{eq41-2021}.
\end{align}
Notice that  right-hand side of \eqref{eq40-2021} is
\begin{align*}
\p(|X'-X|>\delta)+\p(|X|>x-\delta)-\p(|Y|>x-\delta)+\p(|Y|>x-\delta)-\p(|Y|>x).
\end{align*}
The absolute value of the above expression is then uniformly bounded   by
\begin{align}\label{eq43-2021}
\p(|X'-X|>\delta)+\sup_{x\in \mathbb R}|\p(|X|>x)-\p(|Y|>x)|+2\sup_{x\in \mathbb R}\p(|Y-x|\leq \delta).
\end{align}
Similarly, the absolute value of right-hand side of \eqref{eq41-2021} is also uniformly bounded by \eqref{eq43-2021}, which proves \eqref{eq39-2021}. Finally, \eqref{eq39-Gaussian-2021} follows from \eqref{eq39-2021} and  an application of Corollary 1 in \cite{chernozhukov2015comparison}. Note that in this result the constant $C$ is determined by $\max_{1\leq j\leq p}\E(Y^2_j)\leq c_2$ and  $\min_{1\leq j\leq p}\E(Y^2_j) \geq c_1$. 
\end{proof} 

The following result is a consequence of 
of  Lemma 5 of  \cite{zhou2010simultaneous}.

\begin{lemma}\label{Lemma1.2} Under the assumption \ref{asserr}(2), we have that
\begin{align*}
\sup_{u_1,u_2,t_1, t_2\in[0,1]}|\E(G(u_1,t,\FF_i)G(u_2,t_2,\FF_j))|=O(\chi^{|i-j|}).
\end{align*} 
\end{lemma}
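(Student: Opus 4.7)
The plan is to exploit the geometric decay of the physical dependence measure $\delta_q(G,\cdot)$ by a coupling argument. Without loss of generality assume $i \le j$ and set $k = j - i \ge 0$. Let $(\eta'_s)_{s \in \Z}$ be an independent copy of $(\eta_s)_{s \in \Z}$, and define the modified filtration
\[
\tilde{\FF}_j = (\ldots, \eta'_{i-1}, \eta'_i, \eta_{i+1}, \ldots, \eta_j)
\]
in which all innovations with indices $\le i$ are replaced by their independent copies. Set $\tilde G = G(u_2, t_2, \tilde{\FF}_j)$; since $\tilde G$ depends only on $(\eta_{i+1}, \ldots, \eta_j)$ together with the independent copies $(\eta'_s)_{s \le i}$, it is independent of $G(u_1, t_1, \FF_i)$.

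I would then control the approximation error $\|G(u_2, t_2, \FF_j) - \tilde G\|_2$ by a telescoping sum that swaps one coordinate at a time: for $s \le i$, let $\FF_j^{\{s\}}$ denote the filtration in which $\eta_r$ is replaced by $\eta'_r$ for all $r \le s$, so that $\FF_j^{\{-\infty\}} = \FF_j$ and $\FF_j^{\{i\}} = \tilde{\FF}_j$. Consecutive filtrations $\FF_j^{\{s\}}$ and $\FF_j^{\{s-1\}}$ differ in exactly one coordinate (position $s$), so by stationarity of the i.i.d.\ sequence $(\eta_r)$ and the definition \eqref{2.7},
\[
\|G(u_2, t_2, \FF_j^{\{s\}}) - G(u_2, t_2, \FF_j^{\{s-1\}})\|_2 \le \delta_2(G, j-s).
\]
Summing by the triangle inequality and using the geometric decay \eqref{2.8} in Assumption \ref{asserr}(2) gives
\[
\|G(u_2, t_2, \FF_j) - \tilde G\|_2 \le \sum_{s \le i} \delta_2(G, j-s) = \sum_{r \ge k} \delta_2(G, r) = O(\chi^k).
\]

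To conclude, centredness of the process $\varepsilon_{i,n}$ yields $\E[G(u, t, \FF_s)] = 0$ for every $u, t, s$, so $\E[\tilde G] = 0$ and hence $\E[G(u_1, t_1, \FF_i)\tilde G] = \E[G(u_1, t_1, \FF_i)]\,\E[\tilde G] = 0$ by independence. Cauchy--Schwarz then gives
\[
\bigl|\E[G(u_1, t_1, \FF_i) G(u_2, t_2, \FF_j)]\bigr| = \bigl|\E[G(u_1, t_1, \FF_i)(G(u_2, t_2, \FF_j) - \tilde G)]\bigr| \le \|G(u_1, t_1, \FF_i)\|_2 \cdot O(\chi^k),
\]
where the first factor is bounded uniformly in $(u_1, t_1)$ by the exponential moment condition in Assumption \ref{asserr}(1). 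All bounds are uniform in $(u_1, u_2, t_1, t_2)$, so taking the supremum finishes the argument. The only delicate point is organising the telescoping coordinate-swaps so that each single-index difference is matched with the correct lag in $\delta_2(G, \cdot)$; the remaining steps are routine.
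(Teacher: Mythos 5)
Your proof is correct. The paper itself does not give an argument: it simply states that Lemma~\ref{Lemma1.2} ``is a consequence of Lemma 5 of \cite{zhou2010simultaneous}.'' You instead give a direct, self-contained coupling argument, which is the standard way such covariance-decay bounds are established from the physical dependence measure and is very likely the argument underlying the cited lemma. The key steps — building the decoupled copy $\tilde G$ by replacing all innovations with indices $\le i$, telescoping coordinate-by-coordinate so that swapping $\eta_s$ costs $\delta_2(G, j-s)$ (which requires, and you correctly invoke, the shift-stationarity coming from the i.i.d.\ innovations and the Bernoulli-shift form of $G$), summing the geometric tail to get $O(\chi^{j-i})$, and then using independence plus centredness to reduce the covariance to a Cauchy--Schwarz bound — are all in order. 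Two small points worth making explicit: (a) you use $\delta_2 \le \delta_q$, which holds since $q \ge 2$; (b) the uniform $L^2$ bound $\sup_{u,t}\|G(u,t,\FF_0)\|_2 < \infty$ follows from the exponential moment condition \eqref{2.7a} (or alternatively from finite $\delta_q$ plus centredness). Neither is a gap, just an unstated inequality. In short, your route is different only in that it is explicit rather than a citation; it buys self-containedness at no cost.
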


\begin{lemma}\label{Lemma1.3} 
Define 
$$
\sigma_{j,j}(u)=\frac{1}{nb_n}\sum_{i,l=1}^n
{\rm Cov} (Z_{i,j}(u),Z_{l,j}(u))
$$
where $Z_{i,j}$ are the components of the vector $Z_i(u)$ defined in \eqref{3.1}.
If $b_n=o(1)$,  $\frac{\log n}{nb_n}=o(1)$
and  Assumption   \ref{asserr} and Assumption  \ref{asskern}  are satisfied, then there exist positive constants $c_1$ and $c_2$ such that
for sufficiently large $n$
\begin{align*}
0<c_1\leq \min_{1\leq j\leq p}\sigma_{j,j}(u)\leq \max_{1\leq j\leq p}\sigma_{j,j}(u)\leq c_2<\infty .
\end{align*}
for all $u\in [b_n,1-b_n]$.
Moreover, we have for 
\begin{align}
\label{holg1}
\tilde \sigma_{j,j}:=
{1 \over 2\lc nb_n\rc -1} \sum_{i,l=1}^{2\lc nb_n\rc-1}{\rm Cov}(\tilde Z_{i,j},\tilde Z_{l,j}),
\end{align}
the estimates 
\begin{align*}
c_1\leq \min_{1\leq j\leq (n-2\lc nb_n\rc+1)p}\tilde \sigma_{j,j}\leq \max_{1\leq j\leq (n-2\lc nb_n\rc+1)p}\tilde \sigma_{j,j}\leq c_2.
\end{align*}
\end{lemma}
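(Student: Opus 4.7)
The plan is to prove both bounds by inserting the definition of $Z_{i,j}(u)$ into $\sigma_{j,j}(u)$, expressing it as a kernel-weighted double sum of covariances, and then identifying the limiting behavior as proportional to the long-run variance $\sigma^2(u,j/p)$. Using $Z_{i,j}(u) = \varepsilon_{i,n}(j/p) K((i/n-u)/b_n)$, one writes
\begin{equation*}
\sigma_{j,j}(u) = \frac{1}{nb_n}\sum_{i,l=1}^n K\Big(\tfrac{i/n-u}{b_n}\Big) K\Big(\tfrac{l/n-u}{b_n}\Big) \mathrm{Cov}\big(G(\tfrac{i}{n},\tfrac{j}{p},\FF_i),G(\tfrac{l}{n},\tfrac{j}{p},\FF_l)\big).
\end{equation*}
The upper bound is immediate: Lemma \ref{Lemma1.2} gives $|\mathrm{Cov}(\cdot,\cdot)|\lesssim \chi^{|i-l|}$, and since $K$ is bounded with support in $[-1,1]$, only $O(nb_n)$ values of $i$ (and similarly $l$) contribute, so $|\sigma_{j,j}(u)| \lesssim (nb_n)^{-1}\cdot (2nb_n+1)\cdot\sum_{m\in\Z}\chi^{|m|}=O(1)$, uniformly in $u$ and $j$.

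For the lower bound I would reorganize the double sum by the lag $m = l-i$. For $|m|\le M_n := \lceil \log(nb_n)\rceil$, I would replace the non-stationary covariance by the stationary one at location $u$, namely $\mathrm{Cov}(G(u,\tfrac{j}{p},\FF_0), G(u,\tfrac{j}{p},\FF_m))$, using Assumption \ref{asserr}(3) (Lipschitz in $u$) together with Cauchy--Schwarz to show that the replacement error is $O((b_n + |m|/n)\chi^{|m|/2})$. The kernel factor $K((i+m)/n-u)/b_n)$ is approximated by $K((i/n-u)/b_n)$ with error $O(m/(nb_n))$ by the Lipschitz continuity of $K$. A standard Riemann-sum argument then yields, uniformly in $m$ with $|m|\le M_n$,
\begin{equation*}
\frac{1}{nb_n}\sum_{i=1}^n K\Big(\tfrac{i/n-u}{b_n}\Big)^2 = \int K^2(x)\, dx + O\big((nb_n)^{-1}\big).
\end{equation*}
Summing over $|m|\le M_n$ and discarding the tail (which is $O(\chi^{M_n})=O(1/(nb_n))$), one obtains
\begin{equation*}
\sigma_{j,j}(u) = \Big(\int K^2\Big)\, \sigma^2(u,\tfrac{j}{p}) + o(1),
\end{equation*}
uniformly in $u \in [b_n,1-b_n]$ and $j\in\{1,\ldots,p\}$. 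Since Assumption \ref{asserr}(4) gives $\inf_{u,t}\sigma^2(u,t)>0$ and $\int K^2>0$, this provides the lower bound $\sigma_{j,j}(u)\ge c_1$ for large $n$.

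For $\tilde\sigma_{j,j}$, I would write the index $j$ in block form as $j=(s-1)p+k$ with $1\le s\le n-2\lceil nb_n\rceil+1$ and $1\le k\le p$, and set $u_s=(\lceil nb_n\rceil+s-1)/n$. By the definition in \eqref{tildeZi}, the $j$-th entry of $\tilde Z_i$ equals $\varepsilon_{i+s-1,n}(k/p)\, K\big(((i-\lceil nb_n\rceil)/n)/b_n\big)$, so substituting $i' = i+s-1$, $l' = l+s-1$ turns $\tilde\sigma_{j,j}$ into
\begin{equation*}
\tilde\sigma_{j,j} = \frac{1}{2\lceil nb_n\rceil-1}\sum_{i',l'=s}^{s+2\lceil nb_n\rceil-2} K\Big(\tfrac{i'/n-u_s}{b_n}\Big) K\Big(\tfrac{l'/n-u_s}{b_n}\Big)\mathrm{Cov}\big(\varepsilon_{i',n}(\tfrac{k}{p}),\varepsilon_{l',n}(\tfrac{k}{p})\big).
\end{equation*}
Because the support of $K((\cdot/n-u_s)/b_n)$ is contained in $\{i': |i'-nu_s|\le nb_n\}\subset\{s,\ldots,s+2\lceil nb_n\rceil-2\}$ up to at most two boundary indices, this double sum coincides with the one defining $nb_n\,\sigma_{k,k}(u_s)$. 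Hence $\tilde\sigma_{j,j} = \tfrac{nb_n}{2\lceil nb_n\rceil-1}\sigma_{k,k}(u_s)(1+o(1))$, and since $nb_n/(2\lceil nb_n\rceil-1)\to 1/2$, the bounds on $\sigma_{k,k}$ transfer to $\tilde\sigma_{j,j}$ up to the constant factor $1/2$.

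The main obstacle is the uniformity of the Riemann-sum approximation and of the ``freezing'' of $G(i/n,\cdot,\FF_i)$ at the stationary value $G(u,\cdot,\FF_0)$: the error terms must be controlled uniformly over both $u$ (the second argument of $K$) and $j$ (the functional coordinate). This is what forces the use of Assumption \ref{asserr}(3) for the $u$-smoothness and the condition $\log n/(nb_n)=o(1)$, which makes the truncation at $M_n$ and the Riemann error $1/(nb_n)$ simultaneously negligible. Once these uniform controls are in place, the positivity of the long-run variance from Assumption \ref{asserr}(4) closes the argument.
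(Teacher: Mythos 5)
Your argument is essentially the same as the paper's: express $\sigma_{j,j}(u)$ as a kernel-weighted double sum of covariances, freeze the non-stationary covariance at the local design point $u$ via Assumption \ref{asserr}(3) and the geometric decay from Lemma \ref{Lemma1.2}, truncate the lag sum, run a Riemann-sum argument to identify the limit $\sigma^2(u,t_j)\int K^2>0$ uniformly in $u$ and $j$, and reduce $\tilde\sigma_{j,j}$ to $\sigma_{k_2,k_2}$ at a shifted design point by the same block re-indexing. The small differences — your truncation radius $\lceil\log(nb_n)\rceil$ versus the paper's $\asymp \log n$, and the parenthetical $O(\chi^{M_n})=O(1/(nb_n))$ which in fact requires $\chi\le 1/e$ — are harmless since $o(1)$ is all that is needed.
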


\begin{proof}[Proof of Lemma \ref{Lemma1.3}]
By definition,
\begin{align*}
\sigma_{j,j}(u)=\frac{1}{nb_n}\sum_{i,l=1}^n\E\Big(G(\tfrac{i}{n},t_j,\FF_i)K\Big(\frac{\tfrac{i}{n}-u}{b_n}\Big)G(\tfrac{l}{n},t_j,\FF_l)K\Big(\frac{\tfrac{l}{n}-u}{b_n}\Big)\Big).
\end{align*}
Observing Assumption \ref{asserr}  and Lemma \ref{Lemma1.2}, we have 
\begin{align*}
\E(G(\tfrac{i}{n},t_j,\FF_i)G(\tfrac{l}{n},t_j,\FF_l)-G(u,t_j,\FF_i)G(u,t_j,\FF_l))=O\big(\min(\chi^{|l-i|},b_n)\big)
\end{align*}
uniformly with respect to $u\in [b_n,1-b_n]$, $|\tfrac{i}{n}-u|\leq b_n$ and $|\tfrac{l}{n}-u|\leq b_n$.
Consequently, observing Assumption  \ref{asskern}
it follows that 
\begin{align}\label{1-12-29}
\sigma_{j,j}(u) &=\frac{1}{nb_n}\sum_{i,l=1}^n\E\Big(G(u,t_j,\FF_i)K\Big(\frac{\tfrac{i}{n}-u}{b_n}\Big)G(u,t_j,\FF_l)K\Big(\frac{\tfrac{l}{n}-u}{b_n}\Big)\Big)+O(-b_n\log b_n)
\end{align}
On the other hand, if  $r_n$ is  a sequence  such that $r_n=o(1)$ and $nb_nr_n\rightarrow \infty$, $A(u,r_n):=\{l:|\frac{\tfrac{l}{n}-u}{b_n}|\leq 1-r_n, u\in [b_n,1-b_n]\}$
we obtain by \eqref{1-12-29} and Lemma \ref{Lemma1.2} that 

\begin{align}
\sigma_{j,j}(u) &=\frac{1}{nb_n}\sum_{l=1}^n\sum_{i=1}^n\mathbf 1(|i-l|\leq nb_nr_n)\E\Big(G(u,t_j,\FF_i)K\Big(\frac{\tfrac{i}{n}-u}{b_n}\Big)G(u,t_j,\FF_l)K\Big(\frac{\tfrac{l}{n}-u}{b_n}\Big)\Big)\notag\\&+O(-b_n\log b_n+\chi^{nb_nr_n})\notag\\
&=\frac{1}{nb_n}\sum_{l=1}^nK^2\Big(\frac{\tfrac{l}{n}-u}{b_n}\Big)\sum_{\substack{1\leq i\leq n,\\ |i-l|\leq nb_nr_n}}\E\Big(G(u,t_j,\FF_i)G(u,t_j,\FF_l)\mathbf 1\Big(\Big|\frac{\tfrac{i}{n}-u}{b_n}\Big|\leq 1\Big)\Big)\notag\\&+O(-b_n\log b_n+\chi^{nb_nr_n}+r_n)\notag\\
&=\frac{1}{nb_n}\sum_{\substack{1\leq l\leq n,\\l\in A(u,r_n)}}K^2\Big(\frac{\tfrac{l}{n}-u}{b_n}\Big)\sum_{\substack{1\leq i\leq n,\\ |i-l|\leq nb_nr_n}}\E\Big(G(u,t_j,\FF_i)G(u,t_j,\FF_l)\mathbf 1\Big(\Big|\frac{\tfrac{i}{n}-u}{b_n}\Big|\leq 1\Big)\Big)\notag\\&+O(-b_n\log b_n+\chi^{nb_nr_n}+r_n)\label{C7-new}
\end{align}
uniformly for $j \in \{ 1, \ldots ,  p\} $.
We obtain by  the definition of the long-run variance $\sigma^2(u,t)$ in Assumption \ref{asserr}(4)  and Lemma \ref{Lemma1.2} that 
\begin{align}
\Big|\sum_{i=1}^n\E\Big(G(u,t_j,\FF_i)G(u,t_j,\FF_l)\mathbf 1\Big(\Big|\frac{\tfrac{i}{n}-u}{b_n}\Big|\leq 1, |i-l|\leq nb_nr_n\Big)\Big)-\sigma^2(u,t_j)\Big|
=O(\chi^{nb_nr_n})\label{1-12-30}
\end{align}
uniformly with respect to $l\in A(u,r_n)=\{l:|\frac{\tfrac{l}{n}-u}{b_n}|\leq 1-r_n, u\in [b_n,1-b_n]\}$ and  $j \in \{ 1, \ldots ,  p\} $.
Combining \eqref{C7-new} and \eqref{1-12-30}
and using Lemma \ref{Lemma1.2} yields 
\begin{align}
\sigma_{j,j}(u)&=\frac{1}{nb_n}\sum_{l=1}^nK^2\Big(\frac{\tfrac{l}{n}-u}{b_n}\Big)\sigma^2(u,t_j)+O(-b_n\log b_n+\chi^{nb_nr_n}+r_n)
\notag\\
&=\sigma^2(u,t_j)\int_{-1}^1K^2(t)dt+O\Big(-b_n\log b_n+\chi^{nb_nr_n}+r_n+\frac{1}{nb_n}\Big).
\end{align}
Let $r_n=\frac{a\log n}{nb_n}$ for some sufficiently large positive constant $a$,  
then the assertion of the lemma follows in view of Assumption \ref{asserr}(4)). 

For the second assertion, consider the case that $j=k_1p+k_2$ for some $0\leq k_1\leq n-2\lceil nb_n\rceil $ and $1\leq k_2\leq p$.
Therefore by definition \eqref{tildeZi} in the main article,
$$\tilde Z_{i,k_1p+k_2}=G(\tfrac{i+k_1}{n},\tfrac{k_2}{p},\FF_{i+k_1})K(\tfrac{i-\lceil nb_n\rceil }{nb_n}),$$
which gives for the quantity in \eqref{holg1}
\begin{align*}\tilde \sigma_{k_1p+k_2,k_1p+k_2}=
{1 \over 2\lc nb_n\rc -1} \sum_{i,l=1}^{2\lc nb_n\rc-1}\E\Big(G(\tfrac{i+k_1}{n},\tfrac{k_2}{p},\FF_{i+k_1})K(\tfrac{i-\lceil nb_n\rceil }{nb_n})G(\tfrac{l+k_1}{n},\tfrac{k_2}{p},\FF_{l+k_1})K(\tfrac{l-\lceil nb_n\rceil }{nb_n})\Big)\end{align*}
Consequently, putting  $i+k_1=s_1$ and $l+k_1=s_2$ and using  a change of variable, we obtain  that
\begin{align}
\tilde \sigma_{k_1p+k_2,k_1p+k_2}=\sigma_{k_2,k_2}\Big(\tfrac{k_1+\lceil nb_n\rceil}{n}\Big) ,
\end{align}
which finishes the proof.
\end{proof}



\end{document}